\newtheorem{theorem}{Theorem}
\newtheorem{lemma}{Lemma}
\newtheorem{prop}{Property}
\theoremstyle{definition}
\newtheorem{remark}{Remark}
\newtheorem{definition}{Definition}
\newcommand{\E}{{\mathbb E}}
\newcommand{\e}{{\epsilon}}
\newcommand{\PP}{{\mathbb{P}}}
\newcommand{\dd}{{\delta}}
\newcommand{\symdiff}{{\vartriangle}}
\newcommand\moniker[1]{{\em (#1)}}
\newcommand\old[1]{}
\title{Formation of an interface by competitive erosion}
\author[1]{Shirshendu Ganguly\thanks{sganguly@math.washington.edu}}
\author[2]{Lionel Levine\thanks{\url{www.math.cornell.edu/\~levine}. 
 Supported by NSF grant DMS-1243606 and a Sloan Fellowship.}}
\author[3]{Yuval Peres\thanks{peres@microsoft.com}} 
\author[4]{James Propp\thanks{\url{www.jamespropp.org}. 
 Partially supported by NSF grant DMS-1001905.}}
\affil[1]{University of Washington}
 \affil[2]{Cornell University}
\affil[3]{Microsoft Research}
\affil[4]{University of Massachusetts Lowell}
\def\R{\mathbb{R}}
\def\Z{\mathbb{Z}}
\def\div{\mathop{\mathrm{div}}}
\def\Cyl{\mathrm{Cyl}}
\begin{document}
\maketitle

\begin{abstract}
\noindent
In 2006, the fourth author of this paper
proposed a graph-theoretic model of interface dynamics 
called {\bf competitive erosion}.
Each vertex of the graph is occupied by a particle that can be either red or blue.  New red and blue particles alternately get emitted from their respective  bases and perform random walk. On encountering a particle of the opposite color they kill it and occupy its position. 
We prove that on the cylinder graph (the product of a path and a cycle) an interface spontaneously forms between red and blue and is maintained in a predictable position with high probability.
\end{abstract}

%\newpage
%\tableofcontents

\begin{figure}[here]
\centering
\begin{tabular}{ccc}
\includegraphics[width=.3\textwidth]{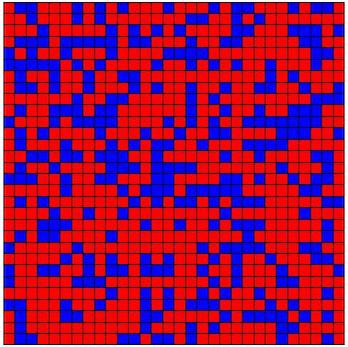} &
\includegraphics[width=.3\textwidth]{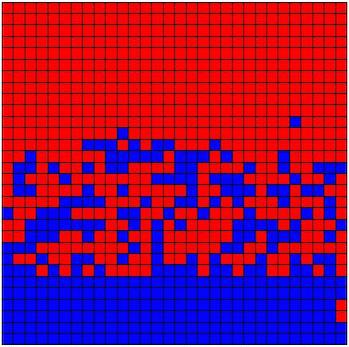} &
\includegraphics[width=.3\textwidth]{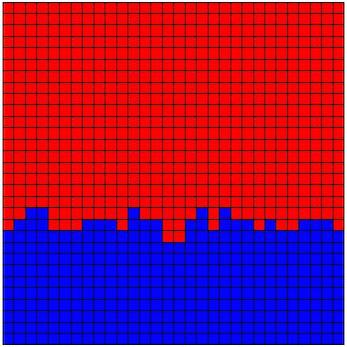} \\
$t=0$ & $t=125$ & $t=325$
\end{tabular}
\caption{ Competitive erosion spontaneously forms an interface.
In the initial state $S(0)$ each vertex of the $30\times 30$ cylinder graph is independently colored blue with probability $.33$ and red otherwise. After 325 time steps of competitive erosion, an interface has formed between red and blue.
\label{f.formation}
}
\end{figure}

\section{An Interface In Equilibrium}

We introduce a graph-theoretic model of a random interface maintained in equilibrium by equal and opposing forces on each side of the interface. Our model can also be started from a heterogeneous state with no interface, in which case an interface forms spontaneously. 
Here are the data underlying our model, 
which we call \emph{competitive erosion}:
	\begin{itemize}
	\item a finite connected graph on vertex set $V$;
	\item probability measures $\mu_1$ and $\mu_2$ on $V$; and
	\item an integer $0 \leq k \leq \#V-1$.
	\end{itemize}
\emph{Competitive erosion} is a discrete-time Markov chain $(S(t))_{t \geq 0}$ on the space of all subsets of $V$ of size $k$. One time step is defined by
	\begin{equation} \label{e.onestep} S(t+1) = (S(t) \cup \{X_t\}) - \{Y_t\} \end{equation}
where $X_t$ is the first site in $S(t)^c$ visited by a simple random walk whose starting point has distribution $\mu_1$, and $Y_t$ is the first site in $S(t) \cup \{X_t\}$ visited by an independent simple random walk whose starting point has distribution $\mu_2$.

For a concrete metaphor, one can imagine that $S(t)$ and its complement represent the territories of two competing species. We will call the vertices in $S(t)$ ``blue'' and those in $S(t)^c$ ``red''.  According to \eqref{e.onestep}, a blue individual beginning at a random vertex with distribution $\mu_1$ wanders until it encounters a red vertex $X_t$ and inhabits it, evicting the former inhabitant. The latter returns to an independent random vertex with distribution $\mu_2$ and from there wanders until it encounters a blue vertex $Y_t$ and inhabits it, evicting the former inhabitant.

If the distributions $\mu_1$ and $\mu_2$ have well-separated supports, one expects that these dynamics resolve the graph into coherent red and blue territories separated by an interface that, although microscopically rough, appears in a macroscopically predictable position with high probability.  The purpose of this article is to prove this assertion in the case of the {\em cylinder graph} 
	$ \mathrm{Cyl}_n = C_n \times P_{n}, $
where $C_n$ is a cycle of length $n$ and $P_n$ is a path of length $n$. 
% that is, n edges (n+1 vertices).
Here $\times$ denotes the Cartesian (box) product of graphs.
We identify $C_n$ with $(\frac1n \Z)/\Z$ and $P_n$ with $(\frac1n \Z) \cap [0,1]$.  We take $\mu_1$ and $\mu_2$ to be the uniform distributions on $C_n \times \{0\}$ and $C_n \times \{1\}$, respectively: blue particles are released from the base of the cylinder and red particles from the top.

Taking $k=\alpha n^2$ for some $0<\alpha<1$, it is natural to guess that the stationary distribution of the Markov chain $S(t)$ assigns high probability to the event 
	\begin{equation} \label{goodset11} A_{\epsilon,n} := \{ S \subset \mathrm{Cyl}_n \,:\, C_n \times [0, \alpha-\epsilon] \subset S \subset C_n \times [0,\alpha+\epsilon] \}, \end{equation}
that is, the event that sites below the line $y=\alpha-\epsilon$ are all blue and that sites above the line $y=\alpha+\epsilon$ are all red.  Our main result confirms this guess.

\begin{theorem}\label{mainresult}
Given $\e>0$ there exists a positive constant $d=d(\e)$ such that      
\begin{equation}\label{band}
\pi_{n}(\mathcal{A}_{\e,n})\ge 1-e^{-dn}
\end{equation} 
where $\pi_{n}$ is the stationary distribution 
of the competitive erosion chain on $\mathrm{Cyl}_n$.  
\end{theorem}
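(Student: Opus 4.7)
My plan is to prove Theorem~\ref{mainresult} by exploiting two separated time scales of the competitive erosion chain at stationarity: it leaves $\mathcal{A}_{\epsilon,n}$ only on an exponential time scale, while it returns on a polynomial one. The ergodic theorem then yields $\pi_n(\mathcal{A}_{\epsilon,n}^c) \le e^{-dn}$.

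\textbf{Step 1 (escape is exponentially rare).} For every $S \in \mathcal{A}_{\epsilon,n}$ I aim to show
\[ \PP(S(t+1) \notin \mathcal{A}_{\epsilon,n} \mid S(t)=S) \le e^{-cn}. \]
A single step leaves $\mathcal{A}_{\epsilon,n}$ only if either the $X$-walk is killed above height $\alpha+\epsilon$ or the $Y$-walk is killed below $\alpha-\epsilon$. For any $S \in \mathcal{A}_{\epsilon,n}$, the volume constraint $|S|=\alpha n^2$ forces the fuzzy strip $C_n \times (\alpha-\epsilon,\alpha+\epsilon)$ to contain approximately $\epsilon n^2$ vertices of each color, so any random walk that enters it meets an opposite-color vertex before crossing with probability $1-e^{-cn}$. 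The proof combines the $1$-D simple random walk behavior of the vertical coordinate with the $\bO(n^2)$ mixing time of $C_n$, so the strip looks effectively like a half-half random slab.

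\textbf{Step 2 (return is polynomial).} Introduce the potential
\[ \Phi(S) = |L \setminus S| + |S \cap U|, \qquad L := C_n \times [0,\alpha-\epsilon],\ U := C_n \times [\alpha+\epsilon,1], \]
so $\Phi(S)=0 \Leftrightarrow S \in \mathcal{A}_{\epsilon,n}$. I aim to establish $\E[\Phi(S(t+1))-\Phi(S(t)) \mid S(t)] \le -c'$ whenever $\Phi \ge 1$, with deterministic jump bound $2$. Heuristically: a misplaced red vertex in $L$ sits where the blue harmonic measure is large, so with probability bounded below it is the $X$ the blue walk hits; meanwhile the replacement $Y$ concentrates near the true interface and is unlikely to fall in $L$, and a symmetric argument handles misplaced blues in $U$. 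Bounded-jump supermartingale theory then gives $\E[\tau \mid S(0)] \le \Phi(S(0))/c' = \bO(n^2)$, where $\tau = \inf\{t : S(t) \in \mathcal{A}_{\epsilon,n}\}$. Feeding Steps~1--2 into excursion theory at stationarity, with $T_{\mathrm{in}}$ and $T_{\mathrm{out}}$ the durations of consecutive excursions inside and outside $\mathcal{A}_{\epsilon,n}$,
\[ \pi_n(\mathcal{A}_{\epsilon,n}^c) \,=\, \frac{\E T_{\mathrm{out}}}{\E T_{\mathrm{in}} + \E T_{\mathrm{out}}} \,\le\, n^{\bO(1)} \cdot e^{-cn} \,\le\, e^{-dn}. \]

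\textbf{Main obstacle.} The delicate input is the uniform negative drift bound of Step~2 for configurations $S$ whose macroscopic shape is far from $C_n \times [0,\alpha]$ (e.g., blue concentrated near the top of the cylinder). There the harmonic measure from the base need not concentrate near the ideal interface, and the naive drift analysis can fail. I would handle this by a two-scale argument: first show that a coarse potential like $|S \symdiff (C_n \times [0,\alpha])|$ has negative drift from any starting configuration, using only the volume constraint $|S|=\alpha n^2$ and a rough hitting bound, which brings the chain in polynomial time to a neighborhood of the ideal set. There $\Phi$ acquires its clean drift and Step~2's conclusion goes through.
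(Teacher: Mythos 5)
Your overall architecture (fast return to $\mathcal{A}_{\e,n}$ plus exponentially slow escape, combined via a renewal/ergodic argument) matches the paper's Lemma~\ref{hitstation}. But both of your key inputs have genuine gaps, and the first one is fatal as stated.

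\textbf{Step 1 is false.} Membership in $\mathcal{A}_{\e,n}$ constrains nothing about the coloring \emph{inside} the strip $C_n \times (\alpha-\e,\alpha+\e)$ beyond the total count, and the strip is not ``effectively a half-half random slab'': it can be adversarial. Take $S \in \mathcal{A}_{\e,n}$ in which the strip is split into a blue arc (say $x \in [0,\tfrac12)$, all blue up to height $\alpha+\e$) and a red arc ($x \in [\tfrac12,1)$, all red down to height $\alpha-\e$). A blue walker reaching height $\alpha-\e$ at an $x$ well inside the blue arc sees a blue corridor of height $2\e n$ and width $\Theta(n)$; the nearest red sites at height $\le \alpha+\e$ are at horizontal distance $\Theta(n)$, while the red sites above $\alpha+\e$ are at vertical distance $2\e n$. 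The walker therefore punches through the top of the strip and settles above $\alpha+\e$ with probability $\Theta(1)$, not $e^{-cn}$. So the single-step escape probability from $\mathcal{A}_{\e,n}$ is \emph{not} uniformly exponentially small, and the quantity $\E T_{\mathrm{in}}$ in your renewal identity cannot be bounded below by $e^{cn}$ this way. This is exactly why the paper works instead with the set $\Omega_\e$ (at most $\e n$ \emph{rows} containing a wrong-colored site) together with $\Gamma_{\dd/2}$ (height function near its maximum), and proves only that from $\Omega_\e\cap\mathcal{A}_{\sqrt\e}\cap\Gamma_{\dd}$ the chain takes $\geq \e n^2$ steps to leave the \emph{larger} set $\mathcal{A}_{\sqrt\e}$ (Lemma~\ref{prelimbound}, via domination by killed IDLA and Theorem~\ref{thm:IDLAcylinder}); the exponential sojourn is then assembled from exponentially many return cycles to $\Omega_\e$ (Lemmas~\ref{hitlemma1} and \ref{finalargument1}). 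The row-structure control and the IDLA comparison are the missing ideas, and they are the heart of the paper.

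\textbf{Step 2 is also overstated, though less fatally.} If $\Phi(S)=1$ because of a single red vertex isolated deep inside the blue bulk, the blue walker must stop at the \emph{first} red site it meets, and the probability that this is the isolated island is only $\Theta(1/\log n)$ (hitting a point before exiting a macroscopic region in two dimensions), not bounded below by a constant; moreover the replacement $Y$-walker can land in $L$ and increase $\Phi$. So a uniform drift $\le -c'$ fails. More importantly, the difficulty you flag as the ``main obstacle'' is real and your proposed fix (negative drift of $|S\symdiff(C_n\times[0,\alpha])|$ from \emph{any} configuration) is not available: the paper's Lemma~\ref{short1} shows the analogous drift degenerates to $\ge -\tfrac1n$ near the optimum, and the positive drift of Theorem~\ref{nonnegativedrift} holds only outside $\Omega_\e$. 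Getting from ``few bad sites'' to ``all sites outside the band correct'' cannot be done by drift alone; the paper does it with the IDLA lower bound (Lemma~\ref{linehit2}).
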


The proof of Theorem~\ref{mainresult} will show that the predicted interface forms quickly, even if the initial state $S(0)$ is heterogeneous (such as a checkerboard of red and blue, or a random initial state like the one shown in Figure~\ref{f.formation}). 

\begin{theorem}
\label{t.quickhit}
Let $\tau_\e = \inf \{t \,:\, S(t) \in \mathcal{A}_{\epsilon,n} \}$. For any $\e>0$ there exist
positive constants $c=c(\e),d=d(\e),N=N(\e)$ such that for all $n>N$ and all subsets $S \subset \Cyl_n$ of cardinality $\lfloor \alpha n^2 \rfloor$ we have
\[
\mathbb{P}(\tau_\e >dn^2 \ | \ S(0)=S)  < e^{-cn}.
\]
\end{theorem}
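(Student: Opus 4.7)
My plan is to prove Theorem~\ref{t.quickhit} by a Lyapunov (drift) argument. I want a non-negative potential $\Psi$ on size-$\lfloor \alpha n^2 \rfloor$ subsets of $\Cyl_n$ that (a) vanishes on $\mathcal{A}_{\e,n}$ and only there, (b) changes by at most a constant in one step of the chain, and (c) has strictly negative expected one-step drift bounded away from zero whenever $S(t) \notin \mathcal{A}_{\e,n}$. Given these properties, Azuma's inequality applied to the supermartingale $M_t := \Psi(S(t \wedge \tau_\e)) + c(t \wedge \tau_\e)$ concentrates $\tau_\e$ around $\Psi(S(0))/c = O(n^2)$ with the desired exponential tail.

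The natural candidate is the \emph{deep-misplacement count}
\[
\Psi(S) := \bigl|\{v \in S : h(v) > \alpha+\e\}\bigr| + \bigl|\{v \notin S : h(v) \le \alpha-\e\}\bigr|,
\]
which vanishes iff $S \in \mathcal{A}_{\e,n}$ and changes by at most $2$ per step. The simpler potential $|S \vartriangle B^*|$ with $B^* := C_n \times [0,\alpha]$ is unsuitable, since the interface is microscopically rough at equilibrium, so that quantity typically \emph{increases} per step near $B^*$. Expanding \eqref{e.onestep}, the increment of $\Psi$ is
\[
\Delta\Psi = \bigl[\mathbf{1}(h(X_t) > \alpha+\e) - \mathbf{1}(h(X_t) \le \alpha-\e)\bigr] + \bigl[\mathbf{1}(h(Y_t) \le \alpha-\e) - \mathbf{1}(h(Y_t) > \alpha+\e)\bigr],
\]
so the required drift bound becomes
\[
\PP\bigl(h(X_t) \le \alpha-\e \mid S\bigr) + \PP\bigl(h(Y_t) > \alpha+\e \mid S\bigr) \;\ge\; \PP\bigl(h(X_t) > \alpha+\e \mid S\bigr) + \PP\bigl(h(Y_t) \le \alpha-\e \mid S\bigr) + c(\e),
\]
uniformly over $S \notin \mathcal{A}_{\e,n}$. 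Heuristically this should hold because the blue walker starting at the base tends to first hit $S^c$ at \emph{low} vertices (converting bad reds into good blues), while the red walker starting at the top tends to first hit $S \cup \{X_t\}$ at \emph{high} vertices (converting bad blues into good reds).

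The main obstacle is to make this drift inequality rigorous, uniformly in $S$. The key ingredient is a quantitative Beurling-type estimate for simple random walk on $\Cyl_n$: SRW started at one end of the cylinder enters an arbitrary target subset $R$ predominantly through the vertices of $R$ facing the starting side, with good control on the exit distribution. Combining this with the cardinality constraint $|S|=\lfloor\alpha n^2\rfloor$---which by pigeonhole forces an imbalance between bad blues and bad reds whenever $\Psi(S)\ge 1$---should produce the required constant gap $c(\e)$. The hardest cases are configurations whose bad vertices are ``shielded'' by thick layers of same-color vertices, which may necessitate a multi-scale or Harnack-based argument, or a depth-weighted refinement of $\Psi$. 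Once the drift is in hand, the concentration step is routine: the increments of $M_t$ are bounded by $2+c$, and since $\Psi(S(0))\le n^2$, Azuma gives
\[
\PP(\tau_\e > T) \;\le\; \exp\!\left(-\frac{(cT - n^2)^2}{C\,T}\right),
\]
so the choice $T = d(\e) n^2$ with $d(\e) > 1/c(\e)$ yields the claimed bound $e^{-c'n}$ (in fact even $\exp(-\Omega(n^2))$).
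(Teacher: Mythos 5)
Your reduction to a drift estimate is clean, and items (a) and (b) for $\Psi$ are fine, but the central claim (c) --- a one-step drift bounded below by a constant $c(\e)>0$ uniformly over all $S\notin\mathcal{A}_{\e,n}$ --- is false, and this is exactly where the real difficulty of the theorem sits. Take $S$ to be a perfect horizontal configuration except for a single red vertex $v_0$ at height $\alpha-2\e$ (with the cardinality compensated near the interface). Then $\Psi(S)=1$, and $\Psi$ can only decrease when the blue walker's exit point is $v_0$ itself. That is a harmonic-measure-of-a-point event in two dimensions: its probability is $\Theta(1/\log n)$ (compare Lemma~\ref{greensfunctionlower} in the appendix), not $\Theta(1)$. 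Meanwhile the compensating "increase" events have negligible probability, so the drift is negative but only of order $1/\log n$; for configurations where the deeply misplaced vertices sit at the tip of a thin monochromatic finger the relevant harmonic measure can be smaller still. A drift that degenerates as $\Psi\to O(1)$ breaks your Azuma computation: with $\Psi(S(0))$ as large as $\Theta(n^2)$ and per-step gain only $1/\log n$ near the end, you cannot conclude $\tau_\e\le dn^2$, and no single application of Azuma with a constant $c$ is available. You flag this ("shielded" configurations, possible depth-weighted refinement) but do not resolve it, and it is not a patchable technicality --- it is the reason a pure Lyapunov argument cannot finish the proof.

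The paper's route is structurally different precisely to avoid this. Its Lyapunov function is the height sum $h(\sigma)=\sum_{(x,y)\in B_1}(1-y)$, and Theorem~\ref{nonnegativedrift} establishes a uniform constant drift only outside $\Omega_\e$, the set of configurations with at most $\e n$ bad \emph{rows} --- a much weaker target than $\mathcal{A}_{\e,n}$, chosen exactly so that the drift does not degenerate. Getting from $\Omega_\e$ to $\mathcal{A}_{\sqrt\e}$ (Lemma~\ref{linehit2}) is then done not by a drift argument at all but by stochastic domination: the blue competitive-erosion cluster dominates a killed IDLA cluster, and the cylinder IDLA estimate (Theorem~\ref{thm:IDLAcylinder}) guarantees that over $\Theta(n^2)$ rounds every site below the target line is absorbed with probability $1-e^{-cn}$. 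This is what handles the "last stragglers" whose per-round hitting probability is only $1/\log n$. The proof of Theorem~\ref{t.quickhit} then chains these hitting-time lemmas together (Section~\ref{pom2}), which is also why the final bound is $e^{-cn}$ rather than the $e^{-\Omega(n^2)}$ your sketch predicts. To repair your argument you would essentially need to prove an IDLA-type covering estimate in place of the uniform drift, at which point you have reconstructed the paper's proof.
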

See the last section for some related models exhibiting interface formation. We also mention a superficially similar process 
called ``oil and water'' \cite{oilwater} which has an entirely different behavior:
the two species do not form a macroscopic interface at all.

\subsection{Comparison with IDLA}\label{idla1}

Internal diffusion limited aggregation (IDLA) is a fundamental model of a 
random interface moving in a \emph{monotone} (outward) fashion. 
IDLA involves only one species with an ever-growing territory 
	\[ I(t+1) = I(t) \cup \{X_t\} \]
where $X_t$ is the first site in $I(t)^c$ visited by a simple random walk whose starting point has distribution $\mu_1$. 
Competitive erosion can be viewed as a symmetrized version of IDLA: whereas $I(t)$ and $I(t)^c$ play asymmetric roles, $S(t)$ and $S(t)^c$ play symmetric roles in \eqref{e.onestep}.

IDLA on a finite graph is only defined up to the finite time $t$ when $I(t)$ is the entire vertex set. For this reason, the IDLA is usually studied on an infinite graph and the theorems about IDLA are limit theorems: asymptotic shape \cite{lbg}, order of the fluctuations \cites{ag,ag2,jls}, and distributional limit of the fluctuations \cite{gff}.  In contrast, competitive erosion on a finite graph is defined for all times, so it is natural to ask about its stationary distribution.  
To appreciate the difference in character between IDLA and competitive erosion, note that
the stationary distribution of the latter assigns tiny but positive probability to configurations that look nothing like the predicted horizontal interface. 
Competitive erosion will occasionally form these exceptional configurations: for example, at a tiny but positive fraction of times $t$ the boundary of the set $S(t)$ is a \emph{vertical} line! 
The proof of Theorem~\ref{mainresult} is delicate 
since there are so many exceptional configurations: 
in terms of cardinality, the desired set $\mathcal{A}_{\e,n}$ is 
an exponentialy small fraction of the set of all recurrent configurations.
\subsection{Idea of the proof}\label{pidea}
To prove that the stationary distribution concentrates on the small set $\mathcal{A} = \mathcal{A}_{\e,n}$, we identify a \emph{Lyapunov function} $h$ on the state space which attains its global maximum in $\mathcal{A}$ and increases in expectation 
	\begin{equation} \label{e.thedrift1} \E (h(S(1)) - h(S(0))) \geq a >0 \end{equation}
provided $S(0)$ is sufficiently far from $\mathcal{A}$.  The function $h$ is as simple as one could hope for: the sum of the heights of the red vertices. The heart of the proof is Theorem~\ref{nonnegativedrift}, which uses an electrical resistance argument to establish the drift \eqref{e.thedrift1} for a suitable notion of ``sufficiently far from $\mathcal{A}$''.   Since the function $h(\sigma)$ is bounded by $n^2$, we then use Azuma's inequality to argue that the process $h(S(t))$ spends nearly all its time in a neighborhood of its maximum.
%the process cannot stay "away" from $\mathcal{A}$ for longer than $O(n^2)$ time since then by \eqref{e.thedrift1} the value of $h(\sigma)$ would exceed $n^2$. Formally  
This establishes Theorem \ref{thmdust} (a statistical version of Theorem \ref{mainresult}). 

%
%Formally one argues this using Azuma's inequality for submartingales. 
%Hence this concludes the proof of the first part.
% The proof of Theorem \ref{mainresult} is much more delicate.
 The main remaining difficulty lies in showing that if $S(0)$ is sufficiently \emph{close} to $\mathcal{A}$, then the chain $S(t)$ hits $\mathcal{A}$ quickly with high probability, establishing Theorem \ref{t.quickhit}. This is done using stochastic domination arguments involving IDLA on the cylinder.
%(Lemma~\ref{linehit2}).  
These ingredients together with a general estimate relating hitting times to stationary distributions (Lemma~\ref{hitstation}) establish Theorem~\ref{mainresult}.
%(for this reason, IDLA is usually studied on an infinite graph). 
%But for competitive erosion the cardinality $\# B_1$ is constant in time. For each $k < \# V$, competitive erosion is a Markov chain on the state space of all partitions $(B_1,B_2)$ of the vertex set with $\# B_1 = k$.

\subsection{The level set heuristic}
\label{s.levelset}

Before restricting to the cylinder we mention a heuristic that predicts the location of the competitive erosion interface for well-separated measures $\mu_1,\mu_2$ on a general finite connected graph, which we assume for simplicity to be $r$-regular.
% and indeed suggests the right definition of well-separated.
 Let $g$ be a function on the vertices satisfying
	\begin{equation} \label{e.generalg} \Delta g = \mu_1 - \mu_2 \end{equation}
where $\Delta$ denotes the Laplacian 
	\[ \Delta g(x) :=  \frac{1}{r} \sum_{y \sim x} (g(x)-g(y)) \]
%	\[ \Delta g(x) :=  g(x) - \sum_{y \sim x} \frac{g(y)}{\deg(y)} \]
and the sum is over vertices $y$ neighboring $x$.  Since the graph is assumed connected, the kernel of $\Delta$ is one-dimensional consisting of the constant functions, so that equation \eqref{e.generalg} determines $g$ up to an additive constant.  

The \emph{boundary} of a set of vertices $S \subset V$ is the set
	\begin{equation} \label{e.boundary} \partial S = \{v \in V-S \,:\, v\sim w \text{ for some } w \in S\}. \end{equation}
Consider a partition of the vertex set $V = S_1 \sqcup B \sqcup S_2$ where $\partial S_1 = B = \partial S_2$. (Think of $S_1$ and $S_2$ as the blue and red territories respectively, of the sort we might expect to see in equilibrium,
and the sites in their common boundary $B$ have indeterminate color.)

Let $g_i$ for $i=1,2$ be the Green function for random walk started according to $\mu_i$ and stopped on exiting $S_i$. These functions satisfy
	\begin{eqnarray*} \Delta g_i &= & \mu_i \quad\text{on } S_i , \\
			  g_i &= & 0  \quad\,\, \text{on } S_i^c.
			  \end{eqnarray*}
The probability that simple random walk started according to $\mu_i$ first exits $S_i$ at $x \in B$ is $-\Delta g_i(x)$.

To maintain equilibrium in competitive erosion, we seek a partition such that $\Delta g_1 \approx \Delta g_2$ on $B$, that is  \[ \Delta (g_1 - g_2) \approx \mu_1 - \mu_2. \]  (Exact equality holds except on $B$.)
Thus by \eqref{e.generalg}, the function $g - (g_1 - g_2)$ is approximately constant.  Since $g_i$ vanishes on $B$, the equilibrium interface $B$ should have the property that
	\[ \text{$g$ is approximately constant on $B$}. \]
The partition that comes closest to achieving this goal takes $S_1$ to be the level set 
	\begin{equation} \label{e.levelset} S_1 = \{x \, :\, g(x) < K \} \end{equation}
for a cutoff $K$ chosen to make $\# S_1 = k$.  An application of the maximum principle shows that for this choice of $S_1$, the maximum and minimum values of $g - (g_1-g_2)$ differ by at most
	\[ \max_{x \in S_1, y \notin S_1, x \sim y} |g(x)-g(y)| , \]
suggesting that the right notion of ``well-separated'' measures $\mu_1$ and $\mu_2$ is that the resulting function $g$ has small gradient.
\old{%%% the maximum principle argument		  
Then $g-g_1$ is harmonic on $B_1$ and $g+g_2$ is harmonic on $B_2$. By the maximum principle,
	\begin{align*} m_1 \leq g - g_1 \leq M_1 \qquad \text{on } S_1 \end{align*}
and
	\begin{align*} m_2 \leq g + g_2 \leq M_2 \qquad \text{on } S_2 \end{align*}
where $m_i$ and $M_i$ are respectively the minimum and maximum values of $g$ on the boundary $\partial B_i$. Therefore, since the supports of $g_1$ and $g_2$ are disjoint, there is a constant $C$ such that
	\[ | g-g_1+g_2-C | \leq \frac12 \max(M_1-m_1, M_2-m_2). \]
If we take $B_1$ to be a level set of $g$, then for $i=1,2$ we have
	\[ M_i - m_i \leq \max_{x \in \partial B_1, \, y\in \partial B_2, \, x\sim y} |g(x)-g(y)| \]
(take $x \in \partial B_1$ attaining the maximum $M_1$ and a neighbor $y \in \partial B_2$ of $x$. Then $x \in B_2$ and $y \in B_1$; since $B_1$ is a level set we have $g(y) \leq \min_{z \in B_2} g(z) \leq m_1$ else $y$ would be in $B_2$).
}
\subsubsection{Mutually annihilating fluids}

The divisible sandpile \cite{div1,div2} is a deterministic analogue of IDLA.
In a competitive version of the divisible sandpile, 
the edges of our graph form a network of pipes containing a red fluid and a blue fluid. 
% Newtonian, incompressible
The two fluids are injected at respective rates $\mu_1$ and $\mu_2$ and annihilate upon contact. One can convert the above heuristic into a proof that this deterministic model has its interface exactly at a level line of $g$ (where $g$ is interpolated linearly on edges).

\subsection{Diffusive sorting}

We define here a process called \emph{diffusive sorting},
introduced by the fourth author,
which couples the competitive erosion chains on a given graph for all values of $k$, using a single random walk to drive them all. We will not use the coupling in this paper, but find it to be of intrinsic interest.

It is convenient to imagine that the blue and red random walks start from vertices $v_1$ and $v_2$ respectively that are external to the finite connected graph $G$ and have directed weighted edges into $G$: each edge $(v_i,v)$ has weight $\mu_i(v)$.

The state space of diffusive sorting consists of bijective label lings $\lambda$ of the vertex set of $G$ by the integers $\{1,\ldots,N\}$, where $N$ is the number of vertices.
We imagine that the labels are detachable from the vertices
and can be carried temporarily by the random walker.
Consider an initial labeling $\lambda_0$.
A random walker starts at $v_1$ carrying the label~$0$.
Whenever it comes to a vertex whose label exceeds the label the walker is currently carrying,
the walker swaps labels with that vertex,
dropping its old (smaller) label and stealing the new (larger) label for itself.
Eventually the walker will visit the vertex labeled $N$ and acquire the label~$N$ for itself, 
at which point we may stop the walk, since the particle will carry the label $N$ forever after and no labels will change.
The vertex labels are now $\{0,...,N-1\}$. We now increase all those labels by 1, and call the result $\lambda_{1/2}$.
 (We have noticed that this process bears a strong resemblance to an algorithm in algebraic combinatorics called promotion \cite{Stanley}, but we have not explored this connection.)
To get from $\lambda_{1/2}$ to $\lambda_1$, we apply the same process from the other side,
releasing a random walker from $v_2$ bearing the label $N+1$,
and letting it swap labels with any vertex it encounters whose label is smaller than its own,
until it acquires the label $1$; then we decrease the labels of all vertices by 1,
obtaining $\lambda_1$.

\emph{Diffusive sorting} is the Markov chain $(\lambda_t)_{t \geq 0}$ on labelings, each of whose transitions from $\lambda_t$ to $\lambda_{t+1}$ is as described in the previous paragraph. To see how this chain relates to competitive erosion, for each integer $k=0,\ldots,N-1$ let 
	\[ S_k(t) = \{ v \,:\, \lambda_t(v) \leq k \}. \]
Then it is not hard to check that $S_k$ has the law of the competitive erosion chain. (The key observation is that the vertices labeled $1$ through $k$ in $\lambda_0$
must be a subset of the vertices labeled $1$ through $k+1$ in $\lambda_{1/2}$, and similarly for going from $\lambda_{1/2}$ to $\lambda_1$.)

The following result shows that the stationary distribution of diffusive sorting on the cylinder graph is concentrated on labelings that are uniformly close to the function $n^2y$. 
% It follows from Theorem~\ref{mainresult} and a union bound.
%%% are our constants are uniform in alpha?  if not, need to be a little careful.

\begin{theorem}\label{diffthm1}
For each $\e>0$ there is a constant $d=d(\e)>0$ such that
	\[ \hat \pi_n \left\{ \lambda \,:\, \sup_{(x,y) \in \mathrm{Cyl}_n}\left | \frac{\lambda(x,y)}{n^2} - y \right| > \e \right\} < e^{-dn}. \]
where $\hat \pi_n$ is the stationary distribution of the diffusive sorting chain $(\lambda_t)_{t \geq 0}$ on $\mathrm{Cyl}_n$.
\end{theorem}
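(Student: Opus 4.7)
The plan is to reduce Theorem~\ref{diffthm1} to Theorem~\ref{mainresult} using the coupling described just above the theorem statement. For each integer $k$, the sublevel set $S_k := \{v : \lambda(v) \leq k\}$ evolves under diffusive sorting as a competitive erosion chain with parameter $k$, so under $\hat\pi_n$ each individual slice $S_k$ is distributed according to the competitive erosion stationary law $\pi_n$ (unique by irreducibility of the chain on a finite state space). Theorem~\ref{mainresult} therefore controls each slice; the remaining work will be to discretize in $k$ and union bound.

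Concretely, I will fix $\delta := \e/4$, and for $j = 1,\ldots,J$ with $J := \lfloor 1/\delta \rfloor - 1$ set $\alpha_j := j\delta$ and $k_j := \lfloor \alpha_j n^2 \rfloor$. Applying Theorem~\ref{mainresult} at density $\alpha_j$ with tolerance $\delta$ produces constants $d_j(\delta) > 0$ with $\hat\pi_n(S_{k_j} \in \mathcal{A}_{\delta,n}) \ge 1 - e^{-d_j n}$. Taking $d := \min_j d_j > 0$ and union bounding over the $J = O(1/\e)$ values of $j$ yields an event $G_n$ of $\hat\pi_n$-probability at least $1 - e^{-d'n}$ (for any $0 < d' < d$ and $n$ sufficiently large) on which
\[
  C_n \times [0, \alpha_j - \delta] \subset S_{k_j} \subset C_n \times [0, \alpha_j + \delta]
\]
holds simultaneously for every $j$.

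On $G_n$, for each vertex $(x,y)$ I will let $j^*$ be the smallest $j$ with $(x,y) \in S_{k_j}$, or $j^* := J+1$ if none exists. In the interior case $2 \le j^* \le J$, the inclusions at $j^*-1$ and $j^*$ sandwich $y$ into $(\alpha_{j^*}-2\delta,\,\alpha_{j^*}+\delta]$, while the defining property of $j^*$ sandwiches $\lambda(x,y)/n^2$ into $(\alpha_{j^*}-\delta,\,\alpha_{j^*}]$ up to $O(1/n^2)$ rounding; subtracting gives $|\lambda(x,y)/n^2 - y| \le 2\delta$. The boundary cases $j^* \in \{1, J+1\}$ are handled identically using the one applicable inclusion together with $y,\lambda(x,y)/n^2 \in [0,1+o(1)]$, yielding a bound of at most $3\delta < \e$. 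Since the estimate is uniform in $(x,y)$, the theorem follows. The argument has no substantive obstacle beyond ensuring that the constant $d(\e)$ in Theorem~\ref{mainresult} can be chosen uniformly across the finitely many densities $\alpha_j$, which is automatic by taking a minimum; all the real work has already been done inside Theorem~\ref{mainresult}.
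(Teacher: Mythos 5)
Your proposal is correct and follows essentially the same route as the paper: discretize the densities into $O(1/\e)$ levels, apply Theorem~\ref{mainresult} to the corresponding sublevel sets $S_{k_j}$ (whose $\hat\pi_n$-marginals are the competitive erosion stationary laws), union bound, and sandwich $y$ against $\lambda(x,y)/n^2$ between consecutive levels. The only differences are cosmetic choices of the discretization and tolerance parameters.
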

% argue that the stationary dist is unique

The level set heuristic (\textsection\ref{s.levelset}) makes a prediction for diffusive sorting on a general graph: if the gradient of $g$ is not too large and we label the vertices by $\{g(v) \,:\, v\in V\}$ instead of by $\{1,\ldots,N\}$, then $\hat \pi_n$ should concentrate on labelings not too far from the function $g$.

We mention, as an aside, that if instead of alternating between
blue walkers and red walkers we use walkers of just one color,
we get a sorting version of ordinary IDLA. For any $k \geq 1$ and $t \geq 0$ the law of $S_k(t+k)$ is that of the IDLA cluster with $k$ particles. (This is trivial for $k=1$ and the rest follows by induction on $k$.) 
%Indeed,
%turning the obvious perspective on its head, one can view the 
%IDLA method of generating a random aggregate of size $k$ as
%a Coupling From The Past scheme for sampling from the stationary
%distribution for $S_k(t)$.

\section{Connectivity properties of competitive erosion dynamics}\label{cpe}

Implicit in the statement of Theorem~\ref{mainresult} is the claim that competitive erosion has a unique stationary distribution.  In this section we formally define the competitive erosion chain and prove this claim.

\subsection{Formal definition of competitive erosion}\label{model}

%\subsection{The state space}\label{1}

The graph $\Cyl_n$ is a discretization of the cylinder $(\R/\Z) \times [0,1]$ with mesh size $\frac{1}{n}.$
To avoid degeneracies, we will always assume $n \geq 2$.
Let $P_n = (\frac1n \Z) \cap [0,1]$ be the path graph with edges $\frac{k}{n} \sim \frac{k+1}{n}$ for $k=0,1,\ldots,n-1$.
Let $C_n = (\frac{1}{n}\Z)/\Z$ be the cycle graph obtained by gluing together the endpoints $0$ and $1$ of $P_n$. Let
\begin{equation}\label{graphrep}
\Cyl_n= C_n \times P_n
\end{equation} 
with edges $(x,y) \sim (x',y')$ if either $x=x'$ and $y \sim y'$, or $x\sim x'$ and $y=y'$. We also add a self-loop at each point in $C_n \times \{0\}$ and $C_n \times \{1\}$, so that $\Cyl_n$ is a $4$-regular graph. 
We will use $\Cyl_n$ to denote both this graph and its set of vertices.

For $t=0,1,2,\ldots$ let $(X^{(t)}_s)_{s \geq 0}$ and $(Y^{(t+\frac12)}_s)_{s \geq 0}$ be independent simple random walks in $\Cyl_n$ with
	\[ P(X^{(t)}_0 = (x,0)) = \frac1n = P(Y^{(t+\frac12)}_0 = (x,1)) \]
for all $x \in C_n$. That is, each walk $X^{(t)}$ starts uniformly on $C_n \times \{0\}$ and each walk $Y^{(t+\frac12)}$ starts uniformly on $C_n \times \{1\}$.  Given the state $S(t)$ of the competitive erosion chain at time $t$, we build the next state $S(t+1)$ in two steps as follows.
	\[ S(t+\frac12) = S(t) \cup \{X^{(t)}_{\tau(t)}\} \]
where $\tau(t) = \inf \{s \geq 0 \,:\, X^{(t)}_s \not \in S(t)\}$. Let
	\[ S(t+1) = S(t+\frac12) - \{Y^{(t+\frac12)}_{\tau(t+\frac12)} \} \]
where $\tau(t+\frac12) = \inf \{s \geq 0 \,:\, Y^{(t+\frac12)}_s \in S(t+\frac12) \}$.
	
To highlight the symmetrical roles played by $S(t)$ and its complement, we will often think of the states as $2$-colorings rather than sets: let
	\begin{equation} \label{e.thecoloring} \sigma_t(x) = \begin{cases} 1 & x \in S(t) , \\ 
						2 & x \notin S(t). \end{cases} \end{equation}

Also define for $i=1,2$
\begin{equation}\label{region}
B_i:=B_{i}(\sigma):=\{x \in \Cyl_n: \sigma(x)=i\}.
\end{equation}
Thus $$S(t)=B_{1}(\sigma_{t}).$$
We will use color $1$ and ``blue'' interchangeably 
and similarly color $2$ and ``red''. 
Now for each $\alpha$ (the proportion of blue sites) strictly between 0 and 1,
competitive erosion defines a Markov chain on the space 
\begin{equation}\label{statespace}
\Omega:=\{\sigma\in \{1,2\}^{\Cyl_n}, 
\#\{x \in \Cyl_n : \sigma(x)=1\}=\lfloor{\alpha|\Cyl_n|}\rfloor\}.
\end{equation}
We also set
\begin{equation}\label{statespace1}
\Omega':=\{\sigma\in \{1,2\}^{\Cyl_n}, 
\#\{x \in \Cyl_n : \sigma(x)=1\}=\lfloor{\alpha|\Cyl_n|}\rfloor+1\}.
\end{equation}
A single time step of competitive erosion consists of a step from $\Omega$ to $\Omega'$ followed by a step from $\Omega'$ back to $\Omega$.

\subsection{Notational conventions}\label{nc}
We will often use the same letter 
(generally $C$, $D$, $c$ or $d$)  
for a constant whose value may change from line to line 
(or sometimes even within one line);
this convention obviates the need for distracting subscripts
and should cause no confusion.

For any process and a subset $A$ of the corresponding state space 
$\tau(A)$ will denote the hitting time of that set.
When $\omega$ is a state in the space and $A$ is a set of states,
we will write $\{\tau(A) \leq K \mid{\omega}\}$ to denote
the event that, starting from $\omega$, 
the process hits $A$ in at most $K$ steps.
Finally, in all the notation the dependence on $n$ will be often suppressed.

\subsection{Blocking sets and transient states}

%Recall that $\Cyl_n$ is the discrete cylinder defined in \eqref{graphrep}.
\begin{definition}
Call a subset $A\subset \Cyl_n$  {\bf blocking}
if $\Cyl_n\setminus A$ is disconnected and the subsets 
$C_n \times \{0\}\setminus A $ and $C_n \times \{1\}\setminus A$
lie in different components.
\end{definition}
\noindent
We next prove that the competitive erosion chain has exactly 
one irreducible class and hence has a well-defined stationary measure. 
We start with a definition. 
\begin{definition}For two disjoint blocking subsets $A,B \subset \Cyl_n$ 
we say that $A$ is {\bf over} $B$ if 
\begin{enumerate}
\item $A$ and $C_n \times \{0\}\setminus B $ 
lie in different components of $\Cyl_n\setminus B$, and
\item $B$ and $C_n \times \{1\}\setminus A $ 
lie in different components of $\Cyl_n\setminus A$.
\end{enumerate}
\end{definition}
 \begin{figure}[hbt] 
\centering
\includegraphics[scale=.6]{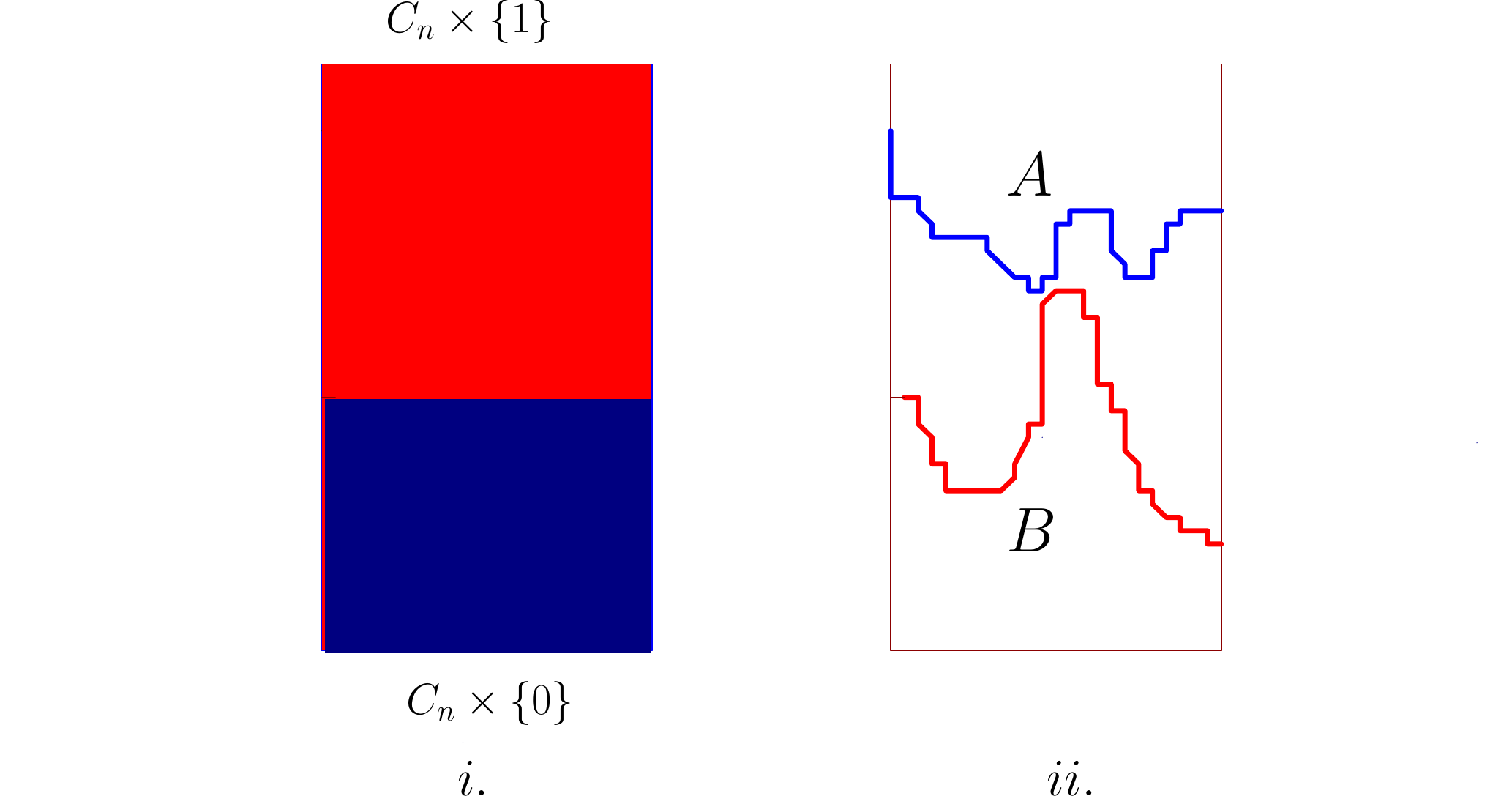}
\caption{ $i.$ The configuration $\sigma_{*}$. $ii.$ A configuration with a blue blocking subset over a red blocking subset.
\label{fig:blocking}}
\end{figure}

\begin{lemma}\label{irreducible} The competitive erosion chain has exactly one irreducible class. Moreover any $\sigma \in \Omega$ that has a blue blocking set over a red blocking set is transient. 

\end{lemma}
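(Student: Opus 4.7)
My plan has two parts, using the canonical ``stratified'' state $\sigma_{*}\in\Omega$ depicted in Figure~\ref{fig:blocking}(i) — with the lowest $\lfloor\alpha n\rfloor+1$ rows of $\Cyl_n$ blue and the rest red — as a target. I would show (a) from every $\sigma\in\Omega$, the chain reaches $\sigma_{*}$ with positive probability in finitely many steps; and (b) from $\sigma_{*}$, the chain can never visit a configuration having a blue blocking set over a red blocking set. Part~(a) forces the chain's unique closed communicating class to contain $\sigma_{*}$, and part~(b) places the bad configurations outside this class, making them transient.

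For~(a), the key fact is that any specific finite trajectory of simple random walk on $\Cyl_n$ has positive probability, so a single competitive-erosion step can, with positive probability, realize $X_{t}$ as any prescribed red vertex reachable from $C_n\times\{0\}$ through blue, and $Y_{t+\frac12}$ as any prescribed blue vertex reachable from $C_n\times\{1\}$ through red. I would describe an explicit finite recipe in which each step pairs and simultaneously corrects one ``misplaced'' blue vertex $y$ (above row $\alpha$, reachable from the top through red since it sits in a red region) with one misplaced red vertex $x$ below $\alpha$ (reachable from the bottom through blue since it sits in a blue region). Each such step decreases the Hamming distance from $\sigma_{*}$ by two, so finitely many suffice.

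For~(b), I would maintain the invariant $(\star)$: in every reachable configuration, every blue vertex lies in the connected component of $C_n\times\{0\}$ in the blue subgraph, and symmetrically every red vertex lies in the red component of $C_n\times\{1\}$. The state $\sigma_{*}$ satisfies $(\star)$ trivially, and every bad configuration violates it: a blue blocking set $A$ over a red blocking set $B$ has $A$ disconnected from $C_n\times\{0\}$ in blue, because any blue path from $A$ to $C_n\times\{0\}$ would have to cross the red wall $B$ below $A$. Preservation under an ``addition'' uses that the newly-blue vertex $X_{t}$ is adjacent to a blue vertex the walker visited en route from $C_n\times\{0\}$, hence sits in the blue component of $C_n\times\{0\}$, and the analogous statement for $Y_{t+\frac12}$.

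The main obstacle will be verifying preservation of $(\star)$ under the \emph{removal} half of each step — painting $X_{t}$ blue detaches it from red, and painting $Y_{t+\frac12}$ red detaches it from blue — since in general a single-vertex removal can disconnect a connected subgraph. Ruling this out requires exploiting both the topology of $\Cyl_n$ and the specific positions of $X_{t}$ and $Y_{t+\frac12}$ as interface exit points of random walks from the bottom and top, so that any piece pinched off is contractible and cannot form a new blocking set.
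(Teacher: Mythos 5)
Your part (a) follows the paper's route (the paper repeatedly converts the red vertex closest to $C_n\times\{0\}$, which is guaranteed to be adjacent to the blue component of the base; your ``pairing'' recipe needs similar care, since a misplaced vertex buried inside a monochromatic blob is not directly reachable by a walker that stops at the \emph{first} opposite-colored site -- but that is a repairable bookkeeping issue). The serious problem is part (b): the invariant $(\star)$ is false. Blue and red islands genuinely form under the dynamics -- this is precisely why the paper distinguishes $R_1,R_2$ (Definition~\ref{areadef2}) from the full color classes, and Figure~\ref{figdiff} explicitly depicts such islands. Concretely, starting from $\sigma_*$, red walkers can with positive probability dig two adjacent one-column trenches into the blue region and then sever the column between them at its base, leaving that column as a blue component disconnected from $C_n\times\{0\}$. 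So your acknowledged ``main obstacle'' is not a technicality to be overcome by topology: pinch-offs are unavoidable, and $(\star)$ is simply not preserved. The weaker fallback that every pinched piece is ``contractible and cannot form a blocking set'' also does not close the argument by itself, because the forbidden configuration involves a blue blocking set that may sit inside the \emph{union} of several islands created at different times, together with a red blocking set below it; you would have to control how such unions evolve over many steps.

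The paper avoids all of this with a space-time argument rather than a stepwise invariant. Assuming a path from $\sigma_0$ (having no blue blocking set over a red blocking set) to a bad configuration with blue blocking set $B$ over red blocking set $A$, it considers the \emph{last} time $\tau'$ at which some vertex of $A$ is blue and the last time $\tau''$ at which some vertex of $B$ is red. A parity observation ($\tau'$ must be a half-integer, $\tau''$ an integer) forces $\tau'\neq\tau''$, and either ordering is contradictory: for instance, if $\tau'<\tau''$ then after $\tau'$ the set $A$ is entirely red, so no blue walker released from $C_n\times\{0\}$ can cross it, hence $B$ acquires no new blue vertices after $\tau'$ and must already be entirely blue then, giving $\tau''<\tau'$. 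If you insist on an invariant-style proof, the invariant would have to be exactly ``no blue blocking set over a red blocking set,'' and verifying its one-step preservation is essentially as hard as this global argument; I recommend adopting the last-time argument instead.
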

%\textcolor{red}{change $\sigma_0$.}
\begin{proof}
Consider the configuration $\sigma_*$ in Figure~\ref{fig:blocking} where the lowest $\lfloor \alpha n(n+1)\rfloor$ vertices are colored blue.  
To prove the first statement notice that from any $\sigma$ 
one can reach $\sigma_*$.
Since in the target configuration there is exactly one blue component, 
we look at the closest vertex with $\sigma$ value $2$ from $C_n \times \{0\}$.
Since there is a blue path from $C_n \times \{0\}$ to that point the Markov chain allows us 
to change it to $1$ and similarly at the other end. 
Thus we are done by repeating this. 

To prove the second statement we prove that starting from $\sigma_{0}$ 
one cannot reach any configuration like the one on the right 
that has one blue blocking set over another red blocking set.
We formally prove this by contradiction. 
Let $\sigma$ be a configuration 
with a blue blocking set $B$ over a red blocking set $A$. 
Now the competitive erosion chain evolves 
as $$\sigma_{0},\sigma_{1/2},\sigma_1,\sigma_{3/2}\ldots $$ 
where for every non-negative integer $k,$ $\sigma_k \in \Omega$ 
and $\sigma_{k+1/2}\in \Omega'.$
Assume now that there is a path 
$$\sigma_{0},\sigma_{1/2},\sigma_1,\sigma_{3/2}\ldots, \sigma_t=\sigma.$$
Let $\tau'$ and $\tau''$ be the last times along the path 
such that at least one vertex of $A$ is blue and similarly $B$ 
contains at least one red vertex respectively. 
If such a time does not exist let us call it $-\infty.$
Since $\sigma_0$ does not have a blue blocking set over a red blocking set, 
$$\max(\tau',\tau'')> -\infty.$$
$\tau'$ must be a half integer (since at half integers there is one more blue particle)
and similarly $\tau''$ must be an integer.
Thus $$\tau' \neq \tau''.$$
Next we see that we cannot have $\tau'<\tau''$. 
For, suppose otherwise.  Then for all times
greater than $\tau',$ $A$ is entirely red.
Thus no blue walker crosses $A$ at any time greater than $\tau'$. 
So $B$ must already be blue at $\tau'$ and stays blue through till $t$ 
implying that $\tau''< \tau'.$  
By a similar argument we cannot have $\tau'> \tau''.$  
Hence we arrive at a contradiction.
\end{proof}

\subsection{Organization of the proofs}\label{oa}

In Section \ref{pot2} we state a weak ``statistical" version of the main result, Theorem~\ref{thmdust}, and provide its proof  using hitting time estimates whose proof appear in Section \ref{phtr} . 
A key step in the proof, the construction of a suitable Lyapunov function, appears in Section \ref{secmtr}. This section uses the theory of electrical networks. A short review of a few basic facts about electrical networks that are assumed in this section  appears in Appendix \ref{app2}. 
In Section \ref{2} we deduce the stronger Theorem~\ref{mainresult}. As a simple corollary we obtain Theorem \ref{diffthm1}. This last section uses an estimate for IDLA on the cylinder proved in Appendix \ref{pf}. We also apply a hitting time estimate for submartingales several times throughout the paper. The proof of the estimate is included in Appendix \ref{newapp}. The final section discusses some related models and  future directions.

\section{Statistical version of the main theorem}\label{pot2}

Given $\e>0$ we define the set 
\begin{equation}\label{defdust}
\mathcal{G}_{\e}:=\left\{\sigma \in \Omega: |\sigma^{-1}(1) \symdiff \{y\le \alpha\}|\le \e n^2
\right\}.
\end{equation}
where $\symdiff$ denotes the symmetric difference of sets, 
$A \symdiff B := (A^c \cap B) \cup (A \cap B^c)$.
\begin{theorem}\label{thmdust}Given $\e>0$ there exists $c>0$ 
such that for large enough $n$
	\[ \pi_n (\mathcal{G}_{\e})\ge 1-e^{-cn}. \]
\end{theorem}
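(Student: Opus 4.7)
The plan is to carry out the Lyapunov function strategy outlined in \textsection\ref{pidea}. Define
\[
h(\sigma) \;:=\; \sum_{v \in B_2(\sigma)} y(v),
\]
the sum of heights of the red vertices, where $y(v) \in [0,1]$ denotes the height (second coordinate) of $v$. Then $h$ takes values in $[0,H_{\max}]$ with $H_{\max} = O(n^2)$, and a single step of the chain changes $h$ by $y(Y_t) - y(X_t) \in [-1,1]$, where $X_t$ and $Y_t$ are the red and blue vertices that swap colors. Thus $h(S(t))$ has $O(1)$ increments, and the game reduces to showing that $h$ has positive drift outside of $\mathcal{G}_\e$ and then converting this drift into exponential concentration of $\pi_n$ on $\mathcal{G}_\e$.

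The first technical ingredient is the drift estimate from Theorem~\ref{nonnegativedrift}: by an electrical-resistance comparison there exists $a = a(\e) > 0$ with
\[
\E\bigl[h(S(t{+}1)) - h(S(t)) \bigm| S(t) = \sigma\bigr] \;\geq\; a \qquad\text{whenever } \sigma \notin \mathcal{G}_\e.
\]
Setting $\tau := \inf\{t \geq 0 : S(t) \in \mathcal{G}_\e\}$, the stopped process $h(S(t\wedge\tau)) - a\,(t\wedge\tau)$ is therefore a submartingale with bounded increments. The submartingale hitting time estimate from Appendix~\ref{newapp}, which is an Azuma-type tail bound, then yields, uniformly in the starting state $\sigma_0$,
\[
\PP_{\sigma_0}(\tau > T) \;\leq\; e^{-cn}
\]
for an appropriate $T = \Theta(n^2)$ and $c = c(\e) > 0$: if $\tau$ exceeded $T$, the accumulated drift would drive $h(S(T))$ above $H_{\max}$ unless the martingale part cancels it out, and the Azuma bound rules out fluctuations of the required size.

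To pass from this quick-hitting estimate to the stationary distribution I would invoke Lemma~\ref{hitstation}, the general bridge between hitting times and $\pi_n$. Since $\mathcal{G}_\e$ is reached in time $O(n^2)$ with probability $1 - e^{-cn}$ from every initial state, the lemma yields $\pi_n(\Omega \setminus \mathcal{G}_\e) \leq e^{-c'n}$ for some $c' = c'(\e) > 0$, which is the statement of the theorem. The main obstacle in this program is not these concentration steps but Theorem~\ref{nonnegativedrift} itself: establishing a uniform positive drift of $h$ over the enormous state space $\Omega \setminus \mathcal{G}_\e$ requires a delicate electrical-network argument comparing escape probabilities of random walks launched from the top and bottom of the cylinder when the interface is skewed, and this is the technical heart of the paper. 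The Azuma hitting-time estimate and the translation to $\pi_n$ are, by comparison, standard tools once the drift is in hand.
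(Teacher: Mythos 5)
Your plan correctly identifies the Lyapunov function $h$ and the drift estimate (Theorem~\ref{nonnegativedrift}) as the central ingredient, and the first concentration step is essentially right: a stopped submartingale plus Azuma gives $\PP_{\sigma_0}(\tau(\text{good set}) > Cn^2) \leq e^{-cn^2}$ uniformly in $\sigma_0$, which is the paper's Lemma~\ref{linehit}/Lemma~\ref{closeopt1}. (One small slip to repair: Theorem~\ref{nonnegativedrift} gives drift $\geq a$ on $\Omega_\e^{\mathsf c}$, not on $\mathcal{G}_\e^{\mathsf c}$; since more than $\e n^2$ wrong sites forces more than $\e n/2$ bad rows, $\mathcal{G}_\e^{\mathsf c} \subset \Omega_{\e/2}^{\mathsf c}$ and the patch is routine, but it should be stated.)

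The genuine gap is in the passage to $\pi_n$. Lemma~\ref{hitstation} has \emph{two} hypotheses, not one: besides the quick-hitting estimate \eqref{hyp1} you also need the escape-time estimate \eqref{hyp2} — starting inside $A$, the chain must take a long time $t_2$ to leave a slightly larger set $B$. You only establish the first. Without a lower bound on $t_2$, the conclusion $\pi(B^{\mathsf c}) \leq t_1/t_2 + p_1 + p_2$ is vacuous: a chain that reaches $\mathcal{G}_\e$ in $O(n^2)$ steps from every state but leaves it immediately satisfies your hypothesis, yet can have $\pi(\mathcal{G}_\e)$ as small as $\Theta(n^{-2})$. The exponential tail in the theorem comes precisely from taking $t_2$ exponentially large, which requires a separate ``stays put'' lemma — the paper's Lemma~\ref{goodbad1}: starting from $\Gamma_\e$, escaping $\Gamma_{2\e}$ requires $h$ to fall by $\e n^2$ against the positive drift, and the second half of the Azuma lemma (Lemma~\ref{lemm:azuma1}$\,ii.$) shows this takes $\exp(\Theta(n^2))$ steps with overwhelming probability. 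Note that making this argument work is exactly why the paper replaces $\mathcal{G}_\e$ by the sublevel set $\Gamma_\e = \{\sigma : h(\sigma) > (\alpha(1-\tfrac{\alpha}{2})-\e)n^2\}$: the escape time is naturally controlled by tracking $h$, and $\mathcal{G}_\e$ is not a level set of $h$. One then feeds $A = \Gamma_\e$, $B = \Gamma_{2\e}$ into Lemma~\ref{hitstation} and transfers the conclusion back to $\mathcal{G}_\e$ via $\Gamma_{\e^2} \subset \mathcal{G}_\e$ (Remark~\ref{containment11}). You should add the analogous escape-time lemma to your plan; once you try to prove it you will be forced into the $\Gamma_\e$ formulation.
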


This theorem says that, 
after running the competitive erosion chain for a long time, 
one is unlikely to see many blue sites below the line $y=\alpha$ 
or many red sites above it, which 
(see Remark \ref{containment11} iii.\ below) 
is a weaker statement than Theorem \ref{mainresult}.

\subsection{The height function}
We now introduce a function on the state space 
that will appear throughout the rest of the article 
and will be used as a Lyapunov function,
along lines sketched in \eqref{e.thedrift1}. 
The interested reader can learn more about Lyapunov functions in \cite{fmm}.
Given $\sigma\in \Omega\cup \Omega'$ 
(recall~\ref{statespace} and~\ref{statespace1}),
let us define the height function of $\sigma$ as

\begin{equation}\label{wf}
h(\sigma)=\sum_{(x,y)\in B_1(\sigma)}(1-y)
\end{equation} where $B_{1}(\sigma)$ is defined in \eqref{region}.  
Clearly
\begin{equation}\label{maxbnd}
\sup_{\sigma\in \Omega}h(\sigma)=h(\sigma_{max})
=\alpha(1-\frac{\alpha}{2}) n^2+O(n),
\end{equation}
where $\sigma_{max}$ is any element of $\sigma$
with $\sigma(x,y) = 1$ for the lowest $\lfloor{\alpha n(n+1)\rfloor}$ vertices.
Also notice that for all $t$ 
\begin{equation}\label{gradbound}
|h(\sigma_{t})-h(\sigma_{t-1})|\le 1.
\end{equation}
%whenever $y < \alpha$
%and $\sigma(x,y) = 2$ whenever $y > \alpha$.
\begin{definition}\label{wrongdef}
Throughout the rest of the article given $\sigma \in \Omega \cup \Omega'$ we say that a vertex $v=(x,y)$ has the \emph{wrong color} if either $y<\alpha$ and $\sigma(v)$ is red or if $y>\alpha$ and $\sigma(v)$ is blue. 
\end{definition}

\begin{figure}{} 
\centering
\begin{tabular}{ccc}

\includegraphics[width=.2\textwidth]{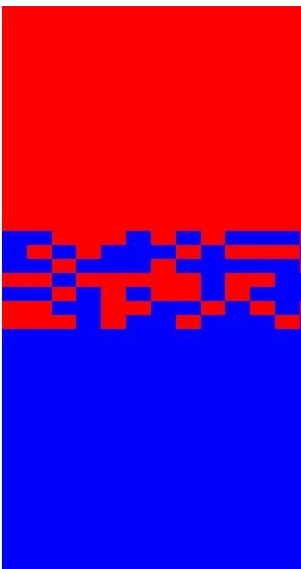} \quad &
\includegraphics[width=.2\textwidth]{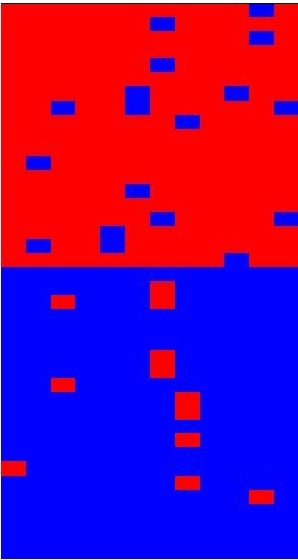} \quad &
\includegraphics[width=.2\textwidth]{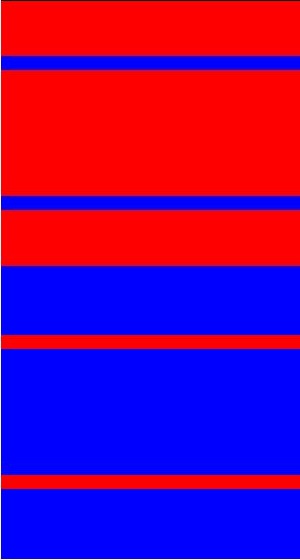} \\
$\mathcal{A}_{\e}$ & $\mathcal{G}_{\e}$ & $\Omega_{\e}$
\end{tabular}
\caption{Examples of the various good sets defined in Table~\ref{chart}. $\Gamma_{\e}$ is defined for technical purposes and is not included in the figure since it essentially looks like $\mathcal{G}_{\e}$ by Remark \ref{containment11}.
\label{goodsets}
}
\end{figure}

%Recall Definition \ref{wrongdef}. 
\begin{table}
\centering
\begin{tabular*}{0.732\textwidth}{ | c | c | c |}
  \hline  
  Set & Description & Defined in\\ 
  \hline 
  $\mathcal{A}_{\e}$ & all sites outside an $\e$-band  are the right color & \eqref{goodset11}\\
  \hline 
  $\mathcal{G}_{\e}$  &  at most $\e n^2$ sites are the wrong color  & \eqref{defdust}\\
  \hline
  $\Gamma_{\e}$  &  height function within $\e n^2$ of its maximum  & \eqref{goodset1}\\
  \hline
  $\Omega_{\e}$  & at most $\e n$ rows have sites of the wrong color  &  \eqref{optimalconf1}\\
  \hline
\end{tabular*}
\caption{Four different kinds of good sets.}
\label{chart}
\end{table}

\begin{definition}
For $\e>0$  let
\begin{equation}\label{goodset1}
\Gamma_{\e}:= 
\{\sigma \in \Omega: h(\sigma)> \bigl(\alpha(1-\frac{\alpha}{2})-\e\bigr)\!n^2\}.
\end{equation}
where $h(\cdot)$ is the height function defined in \eqref{wf}.
\end{definition}
%\newpage
\begin{remark}\label{containment11}
One easily observes the following inclusions:
\begin{enumerate}
\item $\Gamma_{\e^2} \subset  \mathcal{G}_{\e} \subset \Gamma_{\e}$
\item $\mathcal{A}_{\e} \subset  \Omega_{\e}$ 
\item $\mathcal{A}_{\e} \subset \mathcal{G}_{2\\e}.$
\end{enumerate}
\end{remark}

The following lemma states that starting from any configuration $\sigma$ 
the process takes $O(n^2)$ steps to hit $\Gamma_\e$. 
\begin{lemma}\label{closeopt1} Given any small enough $\e>0$ 
there exist positive constants $c=c(\e),d=d(\e),N=N(\e)$ such that 
for all $n>N$ and $\sigma\in \Omega$ 
\begin{eqnarray}\label{ht21}
\mathbb{P}_{\sigma}(\tau(\Gamma_{\e}) >2dn^2) & \le &e^{-cn^2}.
\end{eqnarray}
\end{lemma}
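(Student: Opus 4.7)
The plan is to treat $h$ as a Lyapunov function with uniformly positive drift off of $\Gamma_{\e}$, and then apply a submartingale concentration inequality (the hitting-time tail estimate proved in Appendix~\ref{newapp}) to bound the time to enter $\Gamma_{\e}$. The essential input is the drift bound promised by Theorem~\ref{nonnegativedrift}: there exists $a=a(\e)>0$ such that
\[
\E[\, h(\sigma_{t+1}) - h(\sigma_{t}) \mid \sigma_{t}=\sigma\,] \ge a \qquad \text{for every } \sigma \in \Omega \setminus \Gamma_{\e}.
\]
Combined with the $\pm 1$ Lipschitz bound \eqref{gradbound} and the absolute ceiling $h_{\max} = \alpha(1-\tfrac{\alpha}{2}) n^2 + O(n)$ from \eqref{maxbnd}, the real-valued process $h(\sigma_{t})$ behaves like a random walk with drift at least $a$ and increments of size at most $1$, until it reaches the level set $\Gamma_{\e}$.

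To turn this into a hitting-time tail estimate I would pass to the stopped process
\[
Y_{t} := h(\sigma_{t\wedge \tau}) - a(t\wedge \tau), \qquad \tau := \tau(\Gamma_{\e}),
\]
which, by the drift estimate, is a submartingale with increments bounded in absolute value by $1+a$. On the event $\{\tau > T\}$, since $h(\sigma_{T}) \le h_{\max}$ and $h(\sigma_{0}) \ge 0$, I get
\[
Y_{T} - Y_{0} \;\le\; h_{\max} - aT.
\]
Choosing $d = d(\e)$ large enough that $2ad > \alpha(1-\tfrac{\alpha}{2}) + \e$ and taking $T = 2dn^{2}$ forces $Y_{T} - Y_{0} \le -\e n^{2}/2$ once $n \ge N(\e)$.

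The Azuma-type submartingale tail bound from Appendix~\ref{newapp} then yields
\[
\PP_{\sigma}(\tau > 2dn^{2}) \;\le\; \PP\bigl(Y_{T}-Y_{0} \le -\e n^{2}/2\bigr) \;\le\; \exp\!\Bigl(-\tfrac{(\e n^{2}/2)^{2}}{2\cdot 2dn^{2}\cdot (1+a)^{2}}\Bigr) \;=\; e^{-cn^{2}}
\]
with $c = c(\e) := \e^{2}/(16 d (1+a)^{2})$, which is exactly the claimed inequality.

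The genuinely hard work is packaged into Theorem~\ref{nonnegativedrift}, whose proof invokes the electrical-resistance argument; granted that drift estimate, Lemma~\ref{closeopt1} is essentially an exercise in concentration. The only small subtlety is checking that the notion of ``far from equilibrium'' in the drift theorem is compatible with being outside $\Gamma_{\e}$, which is handled by the inclusions in Remark~\ref{containment11} (after possibly relabeling $\e$ by a constant multiple).
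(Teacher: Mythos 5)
Your proposal is correct and follows essentially the same route as the paper: the paper proves Lemma~\ref{linehit} by feeding the drift from Theorem~\ref{nonnegativedrift} into the packaged Azuma estimate (Lemma~\ref{lemm:azuma1}~$i.$) to bound $\tau(\Omega_{\e})$, and then uses the containment $\Omega_{\e}\subset\Gamma_{\e}$, whereas you bound $\tau(\Gamma_{\e})$ directly by transferring the drift to $\Gamma_{\e}^{\mathsf c}\subset\Omega_{\e}^{\mathsf c}$ via the same containment --- a trivial reordering of the identical argument. Your closing caveat about relabeling $\e$ when passing between $\Omega_{\e}$ and $\Gamma_{\e}$ is exactly the point the paper also glosses over.
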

The next lemma asserts that once the process has hit $\Gamma_{\e}$ it is likely to stay in a neighborhood $\Gamma_{2\e}$ for a long time. 
\begin{lemma}\label{goodbad1} Given $\e>0$ there exist 
positive constants $c=c(\e),d=d(\e),N=N(\e)$ such that 
for all $n>N$ and $\sigma\in \Gamma_{\e}$ 
\begin{eqnarray}\label{ht31}
\mathbb{P}_{\sigma}(\tau(\Omega \setminus \Gamma_{2\e}) >e^{c n^2}) & \ge & 1- e^{-d n^2}.
%\label{ht32}
%\mathbb{E}_{\sigma}[ \tau(\Omega \setminus \Gamma_{2\e})]& \ge & e^{cn^2}.
\end{eqnarray}
\end{lemma}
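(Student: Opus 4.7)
The strategy is a standard exponential-Lyapunov argument built on the drift of the height function $h$. Set $M := \alpha(1-\tfrac{\alpha}{2})n^2$, so that $M - h \ge O(n)$ on $\Omega$ by \eqref{maxbnd}. The essential input is Theorem~\ref{nonnegativedrift}: there are $\e_0 = \e_0(\e) \in (0,\e)$ and $a = a(\e) > 0$ with
\[\E[h(\sigma_{t+1}) - h(\sigma_t) \mid \sigma_t] \ge a \qquad \text{whenever } \sigma_t \notin \Gamma_{\e_0}.\]
Combined with the a.s.\ bound $|h(\sigma_{t+1}) - h(\sigma_t)| \le 1$ from \eqref{gradbound} and the elementary estimate $e^{-\lambda x} \le 1 - \lambda x + \lambda^2$ for $|x| \le 1$, the exponential Lyapunov function $\phi(\sigma) := \exp(\lambda(M - h(\sigma)))$ satisfies, for $\lambda>0$ small enough,
\[\E[\phi(\sigma_{t+1}) \mid \sigma_t] \le \phi(\sigma_t)\bigl(1 - \lambda a + \lambda^2\bigr) \le \phi(\sigma_t) \quad \text{on } \{\sigma_t \notin \Gamma_{\e_0}\}.\]
Thus $\phi$ is a supermartingale on the complement of $\Gamma_{\e_0}$.

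Next, I would decompose the trajectory into \emph{excursions} out of $\Gamma_{\e_0}$: set $\rho_0 := 0$ and, inductively for $i \ge 1$, $\tau_i := \inf\{t > \rho_{i-1}\colon \sigma_t \notin \Gamma_{\e_0}\}$ and $\rho_i := \inf\{t > \tau_i\colon \sigma_t \in \Gamma_{\e_0}\}$. Since $\e_0 < 2\e$ gives $\Gamma_{\e_0} \subset \Gamma_{2\e}$, any visit to $\Omega \setminus \Gamma_{2\e}$ must lie inside some excursion $[\tau_i, \rho_i]$. At entry, $\phi(\sigma_{\tau_i}) \le \exp(\lambda(\e_0 n^2 + 1))$ by the bounded-increment bound at the crossing, and on $\Omega \setminus \Gamma_{2\e}$ we have $\phi \ge \exp(2\lambda\e n^2)$. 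Applying optional stopping to the supermartingale $\phi$ at $\tilde\rho_i := \rho_i \wedge \tau(\Omega \setminus \Gamma_{2\e})$, followed by Markov's inequality, gives
\[\PP\bigl(\text{excursion } i \text{ hits } \Omega\setminus\Gamma_{2\e} \bigm| \mathcal{F}_{\tau_i}\bigr) \le \exp\bigl(-\lambda(2\e - \e_0)n^2 + \lambda\bigr).\]
A union bound over the at most $T$ excursions starting before time $T = e^{cn^2}$ yields $\PP_\sigma(\tau(\Omega\setminus\Gamma_{2\e}) \le T) \le T \exp(-\lambda(2\e - \e_0)n^2 + \lambda)$, which is at most $e^{-dn^2}$ provided $c < \lambda(2\e - \e_0)$ and $d$ is chosen appropriately small.

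The main obstacle is the very first step: extracting a usable threshold $\e_0 < \e$ from the drift statement \eqref{e.thedrift1}, which is phrased only as ``sufficiently far from $\mathcal{A}$''. Lying outside $\Gamma_{\e_0}$ means height deficit at least $\e_0 n^2$, and by Remark~\ref{containment11} this forces at least $\e_0^2 n^2$ wrong-colored sites, which should be enough to trigger the electrical-resistance computation of Theorem~\ref{nonnegativedrift}; verifying that Theorem~\ref{nonnegativedrift} is indeed quantitative in precisely this sense is the only nontrivial checking. After that, the excursion decomposition and the calibration of $\lambda, c, d$ are routine.
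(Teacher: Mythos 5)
Your argument is correct in substance but follows a genuinely different technical route from the paper. The paper proves this lemma by plugging the height function into a general Azuma--Hoeffding estimate for submartingales (Lemma~\ref{lemm:azuma1}~$ii.$, proved in Appendix~\ref{newapp}) with $B=\Gamma_{\e}$, $B'=\Gamma_{2\e}$, $a_4=\e n^2$, $T=n^2$: each window of length $T$ either returns to $\Gamma_{\e}$ or must produce a downward deviation of order $\e n^2$ against the drift, which Azuma bounds by $e^{-\Theta(n^2)}$, so the exit time dominates a geometric variable with exponentially small success probability. You instead build the exponential Lyapunov function $\phi=\exp(\lambda(M-h))$, show it is a supermartingale off $\Gamma_{\e_0}$, and apply optional stopping on each genuine excursion plus a union bound over at most $e^{cn^2}$ excursions. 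Both rest on the same inputs (Theorem~\ref{nonnegativedrift} and \eqref{gradbound}) and the same ``a height deficit of order $\e n^2$ must be overcome'' mechanism; your version avoids choosing a window length and yields the exponent $\lambda(2\e-\e_0)n^2$ directly, while the paper's version reuses a black-box lemma that it needs elsewhere (Lemmas~\ref{linehit}, \ref{hitlemma1}) anyway.

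Two small points to tighten. First, your ``only nontrivial checking'' is indeed fine, but the correct containment chain is $\Omega_{\e'}\subset\mathcal{G}_{2\e'}\subset\Gamma_{2\e'}$ (a configuration outside $\Gamma_{\e_0}$ is outside $\mathcal{G}_{\e_0}$, hence has more than $\e_0 n^2$ wrong sites, hence more than $\e_0 n/2$ bad rows on one side of $y=\alpha$, hence lies outside $\Omega_{\e_0/2}$, and Theorem~\ref{nonnegativedrift} applies with parameter $\e_0/2$); your invocation of the containment $\Gamma_{\e^2}\subset\mathcal{G}_{\e}$ points in the wrong direction. Second, with $\rho_0=0$ your entry bound $\phi(\sigma_{\tau_i})\le\exp(\lambda(\e_0 n^2+1))$ uses $\sigma_{\tau_i-1}\in\Gamma_{\e_0}$, but the hypothesis only gives $\sigma_0\in\Gamma_{\e}\supset\Gamma_{\e_0}$, so the initial segment before the first entry into $\Gamma_{\e_0}$ is not covered. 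This is trivially patched either by handling that one excursion separately using $\phi(\sigma_0)\le e^{\lambda\e n^2}$ (the gap $2\e-\e=\e$ still gives an $e^{-\lambda\e n^2}$ bound), or more simply by taking $\e_0=\e$ throughout, which the drift argument permits.
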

The remainder of this section deduces Theorem~\ref{thmdust} from Lemmas~\ref{closeopt1} and~\ref{goodbad1}.
The Lemmas themselves are proved in Section \ref{phtr}.

\subsection{Bounding stationary measure in terms of hitting times}
We now prove a general lemma on Markov chains relating hitting times to the stationary distribution, which will be used in the proofs of both Theorems \ref{mainresult} and \ref{thmdust}. 
Roughly, it says that if one can identify subsets $A \subset B$ of the state space of a Markov chain such that (1) $A$ is hit quickly from all starting points, and (2) it takes a long time to escape $B$ from all starting points inside $A$, then the stationary distribution assigns high probability to $B$.
This along with Lemmas \ref{closeopt1} and \ref{goodbad1} would then allow us to conclude that $\Gamma_{2\e}$ has high stationary measure.
\begin{lemma}\label{hitstation} 
Let $\omega(\cdot)$ be an irreducible Markov chain on a finite state space $\mathcal{M}$. Suppose $A\subset B \subset \mathcal{M}$. Let $t_1,t_2,p_1,p_2$ be such that  
\begin{eqnarray}
\label{hyp1}
\max_{\omega\in \mathcal{M}}\bigl\{ \mathbb{P}_{\omega}(\tau(A) \ge  t_1)\bigr\} & \le & \ p_{1} \\
\label{hyp2}
\min_{\omega \in A} \bigl\{\mathbb{P}_{\omega}(\tau({B^c}) \ge t_2)\bigr\} & \ge & 1-\ p_2.
\end{eqnarray}
Then $$\pi(B^{c})\le \frac{t_1}{t_2}+p_1+p_2,$$
where $\pi$ is the stationary distribution of the Markov chain $\omega(\cdot)$ on $\mathcal{M}.$
 \end{lemma}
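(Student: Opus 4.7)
The strategy is to exploit stationarity: since $\pi$ is invariant, for any time horizon $T$,
\[ \pi(B^c) = \mathbb{E}_\pi\!\left[\frac{1}{T}\sum_{t=0}^{T-1}\mathbf{1}_{B^c}(\omega(t))\right], \]
so $\pi(B^c)$ equals the expected fraction of time spent in $B^c$ during any window of length $T$. I will choose $T = t_1 + t_2$, calibrated so that the two hypotheses together guarantee that a long contiguous segment of this window lies in $B$.

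Fix a starting state $\omega(0)$ drawn from $\pi$, and let $\sigma = \tau(A)$ be the first hitting time of $A$. Hypothesis \eqref{hyp1} gives $\mathbb{P}(\sigma \geq t_1) \leq p_1$. Define the post-$\sigma$ hitting time of $B^c$, i.e.\ $\rho = \inf\{s \geq 0 : \omega(\sigma+s) \in B^c\}$. By the strong Markov property applied at $\sigma$ (together with the fact that $\omega(\sigma) \in A$ on $\{\sigma<\infty\}$), hypothesis \eqref{hyp2} yields $\mathbb{P}(\rho \geq t_2 \mid \mathcal{F}_\sigma) \geq 1-p_2$ on $\{\sigma<\infty\}$. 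So by a union bound the ``good event''
\[ G := \{\sigma < t_1\} \cap \{\rho \geq t_2\} \]
satisfies $\mathbb{P}(G^c) \leq p_1 + p_2$.

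On the good event $G$, the chain is in $B$ throughout the interval $[\sigma,\sigma+t_2)$; thus the amount of time spent in $B^c$ during the window $[0, t_1+t_2)$ is at most $\sigma + (t_1+t_2) - (\sigma+t_2) = t_1$. On the complementary event $G^c$ we bound this time trivially by $t_1+t_2$. Hence
\[ \pi(B^c) \;=\; \mathbb{E}_\pi\!\left[\frac{\#\{t \in [0,t_1+t_2) : \omega(t) \in B^c\}}{t_1+t_2}\right] \;\leq\; \frac{t_1}{t_1+t_2} + \mathbb{P}(G^c) \;\leq\; \frac{t_1}{t_2} + p_1 + p_2, \]
which is the desired bound.

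The only subtle step is the strong Markov application: hypothesis \eqref{hyp2} is stated for chains \emph{started} in $A$, and to apply it after the random time $\sigma$ we must verify $\sigma$ is an $(\mathcal{F}_t)$-stopping time (it is, as the first hitting time of a set) and that $\omega(\sigma) \in A$ on $\{\sigma < \infty\}$ (automatic by definition of $\tau(A)$). Once this is in place, the bound on time spent in $B^c$ in the window is just arithmetic, and the rest follows from stationarity.
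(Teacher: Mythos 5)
Your proof is correct. The key idea is the same as the paper's: decompose a time window into an initial stretch (of length at most $t_1$ on a good event) before hitting $A$, followed by a sojourn of length at least $t_2$ inside $B$, so that the fraction of time in $B^c$ is at most roughly $t_1/t_2$ plus the failure probabilities $p_1+p_2$. Where you diverge is in how this window estimate is converted into a bound on $\pi(B^c)$: the paper starts the chain from an arbitrary state, invokes the ergodic theorem $\pi(B^c)=\lim_{t\to\infty}\frac{1}{t}\E_\omega\sum_{k\le t}\mathbf{1}(\omega(k)\in B^c)$, and then chops $[1,t]$ into blocks of length $t_2$, applying the hitting/sojourn bound in each block; you instead start the chain in stationarity and use the exact identity $\pi(B^c)=\E_\pi\bigl[\frac{1}{T}\sum_{t<T}\mathbf{1}_{B^c}(\omega(t))\bigr]$ for the single window $T=t_1+t_2$. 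Your route is arguably cleaner: it needs no limit, no block decomposition, and no appeal to the ergodic theorem (only invariance of $\pi$), and the strong Markov step at $\sigma=\tau(A)$ that you flag is exactly the point that the paper's block argument also implicitly relies on. Both arguments yield the same bound, and yours even gives the marginally sharper constant $t_1/(t_1+t_2)$ before relaxing to $t_1/t_2$.
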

 
\begin{proof}
First we note that by the ergodic theorem for Markov chains 
for all $\omega \in \mathcal{M}$  
\begin{equation}\label{ergodbound}
\pi(B^\mathsf{c})=\lim_{t\rightarrow \infty}\frac{\E_{\omega}
\sum_{k=1}^{t}\mathbf{1}(\omega(k)\in B^\mathsf{c})}{t}.
\end{equation}
The proof of the above is standard. For example, it directly follows from  \cite[Theorem 4.9]{lpw}. 
We now claim that the following bound is true: 
$$\sup_{\omega\in \mathcal{M}}\E_{\omega}\sum_{k=1}^{t_2}\mathbf{1}(\omega(k)\in B^\mathsf{c})\le t_1+(p_1+p_2)(t_2).$$
To see this decompose the sum on the left hand side:
$$\E_{\omega}\sum_{k=1}^{t_2}\mathbf{1}(\omega(k)\in B^\mathsf{c})=\E_{\omega}\sum_{k=1}^{\tau(A)\wedge t_2}\mathbf{1}(\omega(k)\in B^\mathsf{c})+\E_{\omega}\sum_{k=\tau(A)\wedge t_2}^{t_2}\mathbf{1}(\omega(k)\in B^\mathsf{c}).$$
 The first sum is at most $t_1$ on the event that $\tau({A}) \le t_1$ and at most $t_2$ on the complement. The second sum is at most $t_2$ on the event that after $\tau({A})$ the process exits $B$ in fewer than $t_2$ steps and $0$ otherwise. Hence adding these up we get the upper bound 
\begin{equation}\label{ub12} 
 \sup_{\omega \in \mathcal{M}}\E_{\omega}\sum_{k=1}^{t_2}\mathbf{1}(\omega(k)\in B^\mathsf{c})\le t_1 +p_1 t_2 +p_2t_2.
 \end{equation}
We now decompose the sum $\sum_{k=1}^{t}\mathbf{1}(\omega(k)\in B^\mathsf{c})$ into blocks of length $t_2$, i.e.
$$\sum_{k=1}^{t}\mathbf{1}(\omega(k)\in B^\mathsf{c})=\sum_{k=1}^{t_2}\mathbf{1}(\omega(k)\in B^\mathsf{c})+\sum_{k=t_2+1}^{2t_2}\mathbf{1}(\omega(k)\in B^\mathsf{c}) + \ldots.$$
The theorem now follows from \eqref{ergodbound} by using \eqref{ub12} for each of the above sums on the  right hand side. \end{proof}

\subsection{Proof of Theorem \ref{thmdust}}
Notice that it suffices to prove that for given small enough $\e$ for all large enough $n$
\begin{equation}\label{suff1}
\pi(\Gamma_{2\e})\ge 1-e^{-cn^2}
\end{equation}
for some $c=c(\e)> 0.$ 
By the lower containment in Remark \ref{containment11} 
this proves that $$\pi(\mathcal{G}_{\sqrt{2\e}})\ge 1-e^{-cn^2}.$$
The proof now follows immediately from Lemma \ref{hitstation} 
with the following choices of the parameters:
\begin{eqnarray*}
A&= &\Gamma_{\e}\\
B&= &\Gamma_{2\e}\\
t_1&= &d_1n^2\\
t_2&= &e^{d_2n}\\
p_1 &= & e^{-c_1n^2}\\
p_2 &= &e^{-c_2n^2}
\end{eqnarray*}
where $c_1,c_2,d_1 ,d_2$ are chosen such that the hypotheses \eqref{hyp1} and \eqref{hyp2} of Lemma \ref{hitstation} are satisfied. Lemmas \ref{closeopt1} and \ref{goodbad1} allow us to do that. 
\qed

\section{The main technical result}\label{secmtr}
We now build towards the proofs of Lemmas \ref{closeopt1} and \ref{goodbad1}. The next result is one of the key technical ingredients of the paper. 
Recall (\ref{statespace}) and (\ref{statespace1}).
Given $\sigma \in \Omega \cup \Omega'$ 
for any  $a\in \{0,\frac{1}{n},\ldots, 1\}$ 
let us say that the line $y=a$ is a \emph{bad level} 
if there exists $\mathbf{x}=(x,a)$ such that 
$\sigma(\mathbf{x})$ has the wrong color. 
%That is if $a<\alpha$ and $\sigma(\mathbf{x})$ is red and similarly if $a>\alpha$ and $\sigma(\mathbf{x})$ is blue. 
%\textcolor{red}{reword in terms of bad levels}
We now define the set of configurations $\sigma$ that have very few bad levels.
\begin{definition}\label{defoptimalconf}
Given  $\e>0$ for all $n$ let $\Omega_{\e}$ be 
the set of all configurations $\sigma\in \Omega$ such that 
\begin{eqnarray}\label{optimalconf1}
\# \{y: \sigma((j,y))=2 \text{ for some } (j,y)\in \Cyl_n,\,\,\mbox{ with } y\le \alpha \}
& \le & \e n \quad \text{ and} \\
%\label{optimalconf2}
\# \{y:  \sigma((j,y))=1 \text{ for some }  (j,y)\in \Cyl_n,\,\mbox{ with } y\ge \alpha \}
& \le & \e n.
\end{eqnarray}
Define $\Omega'_{\e}$ similarly.
\end{definition}

Recall the competitive erosion chain $\sigma_t$ 
defined in \eqref{e.thecoloring}. 
This section is devoted entirely to the proof of the following theorem:

\begin{theorem}\label{nonnegativedrift}
\moniker{Lyapunov Function}
Given $\e>0$ there exists $a=a(\e)>0$ 
such that 
for all $n$ sufficiently large and
for all $\sigma\in \Omega_{\e}^\mathsf{c}$
\begin{equation*}
\E(h(\sigma_1)-h(\sigma_0)\mid \sigma_0=\sigma)\ge a
\end{equation*} 
where $h(\cdot)$ is the height function defined in \eqref{wf}. 
\end{theorem}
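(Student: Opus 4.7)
A direct computation using \eqref{wf} and the update rule \eqref{e.onestep} gives
\[
h(\sigma_1)-h(\sigma_0) = \bigl(1-y(X_t)\bigr) - \bigl(1-y(Y_t)\bigr) = y(Y_t) - y(X_t),
\]
so the theorem reduces to showing $\E[y(Y_t)] - \E[y(X_t)] \ge a(\e) > 0$ whenever $\sigma\in\Omega_\e^c$. By the reflection symmetry $(x,y)\mapsto(x,1-y)$ combined with the color swap $1\leftrightarrow 2$ (which preserves the drift $\E[y(Y_t)-y(X_t)]$ while turning the density $\alpha$ into $1-\alpha$), it suffices to consider the case where the first clause of Definition~\ref{defoptimalconf} fails, i.e.\ at least $\e n$ distinct rows with $y\le\alpha$ contain a red vertex. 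Pigeonholing, at least $\e n/2$ of these bad rows have heights $y_i\le \alpha-\e/4$.

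The heart of the proof is the electrical estimate that for each such bad row, the probability $p_i$ that the blue walker's terminus $X_t$ lies in the row $C_n\times\{y_i\}$ is at least $c/n$, with $c=c(\alpha)>0$ independent of $n$ and $\sigma$. Granting this,
\[
\E[y(X_t)] \le \alpha - \sum_i (\alpha-y_i)\,p_i \le \alpha - \frac{c\e^2}{8},
\]
while a companion argument bounds $\E[y(Y_t)]\ge\alpha - o(\e^2)$, so the drift is at least $c\e^2/8 - o(\e^2)>0$ once $\e$ is below a threshold depending on $\alpha$. The proof of the lower bound on $p_i$ combines two standard ingredients from the theory of reversible Markov chains reviewed in Appendix~\ref{app2}. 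First, the commute time identity is used to bound the effective resistance between $C_n\times\{0\}$ and $C_n\times\{y_i\}$ inside $B_1$ by an $O(1)$ constant; the needed unit flow can be routed vertically up through blue paths that wrap around the cycle, because Lemma~\ref{irreducible} rules out any red blocking set below height $y_i$ in an irreducible configuration, so that every row below $y_i$ contains a blue cycle. Second, rapid horizontal mixing of simple random walk on $C_n$ inside $B_1$ implies that upon first reaching row $y_i$ the walker's conditional distribution is approximately uniform on the blue vertices of that row, so it subsequently exits $B_1$ through one of the red vertices of the row with probability at least $c/n$.

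The main obstacle is precisely this electrical lemma. It must be proved uniformly over all irreducible $\sigma\in\Omega_\e^c$, and the environment $B_1$ in which the blue walker moves can be a highly irregular subset of $\Cyl_n$: it may pinch down to thin necks, form narrow horizontal bands between bad rows, and contain isolated red intrusions. The unit-flow construction and the horizontal-mixing estimate must both be robust to such geometry, and the companion bound on $\E[y(Y_t)]$ must be arranged so that any downward pull on the red walker's terminus (caused by red fingers reaching below height $\alpha$) is quantitatively smaller than the downward pull on the blue walker's terminus, so that the asymmetry driving the drift genuinely survives in every configuration violating the hypothesis $\sigma\in\Omega_\e$.
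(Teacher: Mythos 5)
Your opening reduction is correct and is exactly the paper's Lemma~\ref{heightinter}: the drift equals $\E[y(Y_t)]-\E[y(X_t)]$, so one must show the blue walker exits low and the red walker exits high. From there, however, the argument has genuine gaps. Both of your bounds are anchored at the absolute level $\alpha$, and neither holds uniformly over $\Omega_{\e}^{\mathsf{c}}$. The inequality $\E[y(X_t)]\le\alpha-\sum_i(\alpha-y_i)p_i$ tacitly assumes that, conditioned on not exiting in a bad row, the blue walker exits at height at most $\alpha$; but whenever there are red sites below $\alpha$ the fixed cardinality $\lfloor\alpha n^2\rfloor$ forces blue sites above $\alpha$, through which the walker can exit arbitrarily high. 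More seriously, the companion bound $\E[y(Y_t)]\ge\alpha-o(\e^2)$ is the crux of the theorem and is asserted without proof: for a configuration whose bulk interface sits at height $\alpha-\e$ with the surplus blue mass in thin fingers above it, both expected exit heights are near $\alpha-\e$, not $\alpha$, and the positive drift is a second-order effect coming from the non-flatness of the boundary. The paper never compares either walker to $\alpha$; it compares them to each other via the matched column-wise baselines $y_k^{*}$ and $y_k^{**}$ (observation \eqref{2obs2}), which by itself yields only drift $\ge -1/n$ (Lemma~\ref{short1}), and then extracts a strictly positive gain by the bending-flow construction of Lemma~\ref{maincondition}, harvesting it from whichever side (blue or red) the case analysis on $\partial^{out}\tilde{R}_1$ makes available. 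Your final sentence acknowledges that the downward pulls on the two walkers must be shown to differ quantitatively, but that \emph{is} the theorem, not a loose end.

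Second, the electrical lemma $p_i\ge c/n$ is not established by the tools you cite. Lemma~\ref{irreducible} excludes a blue blocking set lying \emph{over} a red blocking set; it does not exclude red blocking sets below height $y_i$, and it certainly does not imply that every row below $y_i$ contains a blue cycle wrapping the cylinder --- configurations in $\Omega_{\e}^{\mathsf{c}}$ have red vertices in many such rows by hypothesis. An $O(1)$ effective resistance from $C_n\times\{0\}$ to row $y_i$ inside $B_1$, even if available, controls neither the probability of reaching that row before exiting $B_1$ nor the exit distribution upon reaching it; and the claim of approximate uniformity on the blue vertices of the row at the first hitting time fails when $B_1$ near that row is fragmented into thin columns, which is exactly the irregular geometry you flag as the obstacle. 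The paper's variational route (Lemma~\ref{heightgreen1} together with the Dirichlet/Thomson principle) is designed precisely to avoid any harmonic-measure lower bound for individual boundary sites: an upper bound on the expected exit height follows from exhibiting \emph{any} admissible flow of small energy, and the explicit bent flows accomplish this by an elementary computation that is automatically uniform over all configurations.
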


For $A\subset \Cyl_n$ and $v \in \Cyl_n$ define
\begin{equation}\label{stoppedheightgeneral}
H_{A}(v)=\E_{v}[y_{\tau({A^c})}] 
\end{equation}
where $(x_{\tau({A^c})},y_{\tau({A^c})})$ is the point at which the random walk exits $A$ and $\E_{v}$ is the expectation under the random walk measure on $\Cyl_n$ with starting point $v.$

\begin{definition}\label{areadef2} 
Given $\sigma\in \Omega\cup \Omega'$ define $R_1 = R_1(\sigma)\subset \Cyl_n$ 
to be the set of all points of color $1$ in $\Cyl_n$ 
reachable by a monochromatic path of color $1$ in $\sigma$ from a point in  
$C_n \times \{0\}$. 

Similarly let $R_2 = R_2(\sigma)$ be the set of all points of color $2$ in $\Cyl_n$ 
reachable by a monochromatic path of color $2$ from a point in  
$C_n \times \{1\}$.
See Figure~\ref{figdiff}. 
\end{definition}
\begin{figure}
\centering
\includegraphics[scale=.6]{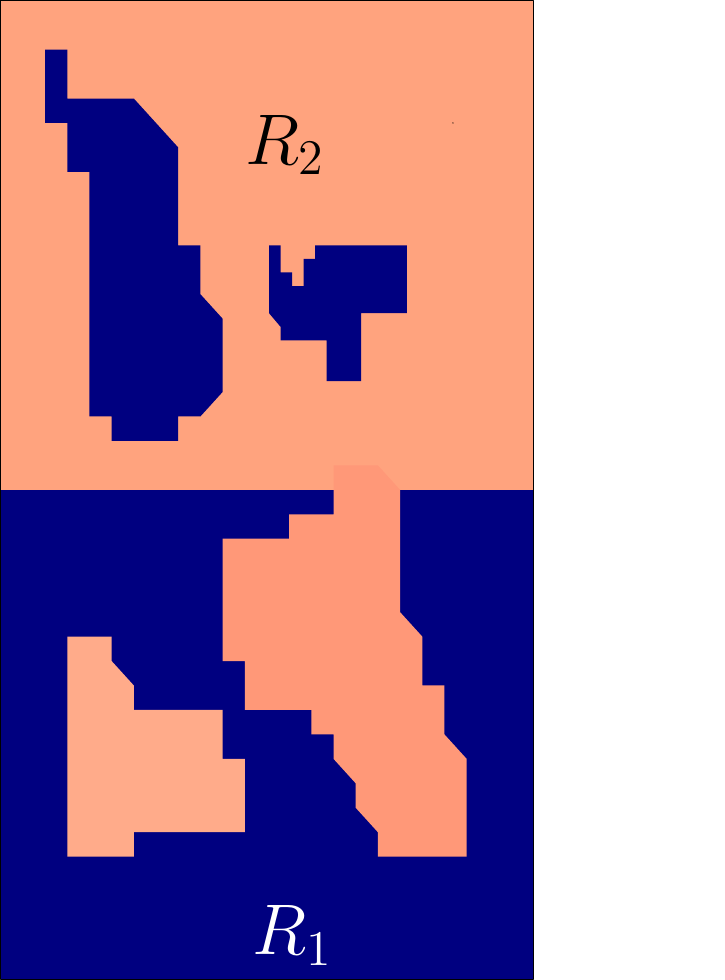}
\caption{The sets $R_1$ and $R_2$ of Definition \ref{areadef2}. 
Note that the two blue islands and the red island 
are not included in either set.}
\label{figdiff}
\end{figure}

\begin{lemma}\label{heightinter} For $\sigma \in \Omega,$
\begin{align}\label{heightexpress}
\E(h(\sigma_1)-h(\sigma_0)\mid \sigma_0 =\sigma) &= \frac{1}{n}\E_{\sigma_0} \left[\sum_{w\in C_n \times \{1\}}H_{R_{2}(\sigma_{1/2})}(w)\right]- \frac{1}{n}\sum_{v\in C_n \times \{0\}}H_{R_1(\sigma_0)}(v) \\
\end{align}
where $$\sigma_0,\sigma_{1/2},\sigma_{1}$$ 
form the steps of the competitive erosion chain. 
\end{lemma}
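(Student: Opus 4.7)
The plan is to track the two vertices whose color changes during the half-steps $\sigma_0 \to \sigma_{1/2} \to \sigma_1$, rewrite the height increment as the difference of their $y$-coordinates, and then identify each expectation with an exit-height $H_{R_i}$ by analysing cases on the starting vertex of the driving random walk.

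First I would write $h(\sigma_1) - h(\sigma_0) = y_Y - y_X$, where $X = X^{(0)}_{\tau(0)} \in \Cyl_n$ is the red vertex that becomes blue in the first half-step (i.e.\ the first exit of $S(0)^c$ from the blue walker started uniformly on $C_n \times \{0\}$) and $Y = Y^{(1/2)}_{\tau(1/2)}$ is the blue vertex that becomes red in the second half-step. The equality uses the definition $h(\sigma) = \sum_{(x,y)\in B_1(\sigma)} (1-y)$, so adding a vertex at height $y_X$ contributes $-y_X$ and removing a vertex at height $y_Y$ contributes $+y_Y$ to the height.

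Next I would identify $\E_v[y_X]$ with $H_{R_1(\sigma_0)}(v)$ for each starting vertex $v \in C_n \times \{0\}$. The key geometric observation is that $\partial R_1(\sigma_0)$ consists entirely of red vertices: any blue vertex neighbouring $R_1$ is itself reachable by a blue path from $C_n \times \{0\}$ and hence lies in $R_1$. Then the case split is: if $v$ is blue, then $v \in R_1$ and the walker, by definition of the stopping rule, stays inside $R_1$ until it exits, at which point it lands on a red vertex of $\partial R_1$; so the first red vertex hit coincides with the exit point from $R_1$, giving $\E_v[y_X] = H_{R_1(\sigma_0)}(v)$. If $v$ is red, then $X = v$, so $y_X = 0 = y_v = H_{R_1(\sigma_0)}(v)$ (the latter because $v \notin R_1$ and the exit time from $R_1^\mathsf{c}$ is zero). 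Averaging over $v$ uniform on $C_n\times\{0\}$ yields
\[
\E[y_X \mid \sigma_0] \;=\; \frac{1}{n}\sum_{v \in C_n\times\{0\}} H_{R_1(\sigma_0)}(v).
\]

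By an entirely symmetric argument applied to the second half-step with the roles of red and blue interchanged, conditional on $\sigma_{1/2}$,
\[
\E[y_Y \mid \sigma_{1/2}] \;=\; \frac{1}{n}\sum_{w \in C_n\times\{1\}} H_{R_2(\sigma_{1/2})}(w),
\]
using that $\partial R_2(\sigma_{1/2})$ consists entirely of blue vertices of $S(1/2)$. Taking expectations, using the tower property to integrate over $\sigma_{1/2}$ on the $Y$-side, and subtracting gives the claimed identity. The only real subtlety is verifying that the random walk's stopping at the first opposite-colour vertex coincides with its exit from $R_i$ in both cases — this is exactly where the monochromatic-reachability definition of $R_i$ pays off, by forcing $\partial R_i$ to be monochromatic of the opposite colour.
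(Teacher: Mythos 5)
Your proposal is correct and follows essentially the same route as the paper: both decompose the increment over the two half-steps (your $y_Y - y_X$ is just the paper's $[1-y_X]+[y_Y-1]$ with the constants cancelled) and both rest on the identification of the walker's stopping point with its exit point from $R_i$, which your observation that $\partial R_i$ is monochromatic of the opposite color justifies. The paper states this identification in one line; your case analysis on red versus blue starting vertices merely spells out the same fact.
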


\begin{proof}
The first walker of color $1$ starts uniformly on $C_n \times \{0\}$ and stops on first hitting a site of color $2$, that is, on first exiting $R_1(\sigma)$. 
Writing \[ h(\sigma_1)-h(\sigma_0)=[h(\sigma_{1/2})-h(\sigma_{0})]+[h(\sigma_{1})-h(\sigma_{1/2})], \]
we see that the expected value of the first term in brackets on the right side is  
$$1- \frac{1}{n}\sum_{v\in C_n \times \{0\}}H_{R_1(\sigma_0)}(v).$$  
Now a walker of color $2$ starts uniformly on $C_n \times \{1\}$ 
and stops on first exiting $R_2(\sigma_{1/2})$, 
so the expected value of the second term in brackets is 
$$\displaystyle{\frac{1}{n}\E_{\sigma_0} 
\left[\sum_{w\in C_n \times \{1\}}H_{R_{2}(\sigma_{1/2})}(w)\right]-1}$$
(this term appears with an expectation because $\sigma_{1/2}$ is random). 
\end{proof}

\subsection{Energy and flows}\label{secef}
Next we relate the expression on the the right side of \eqref{heightexpress} to energy of flows.
We first recall some standard notation in electrical network theory.
On a graph $G=(V,E)$ let $w \sim v$ signify that $w$ is a neighbor of $v$.
Also let $\vec{E}$ be the set of directed edges where each edge in $E$ corresponds
to two directed edges in $\vec{E}$, one in each direction
(except for self-loops, 
which correspond to just one directed edge in $\vec{E}$).
A flow is an antisymmetric  function $f: \vec{E} \rightarrow \mathbb{R}$
(i.e.\ a function satisfying $f(w,v)=-f(v,w)$).
Note that by definition the value of a flow on a self-loop is $0.$
Define the energy of a flow $f$ by
\begin{equation}\label{enerno}
\mathcal{E}(f)=\frac12 \sum_{(v,w)\in \vec{E}}f(v,w)^2.
\end{equation} 
For any flow $f: \vec{E} \rightarrow \mathbb{R}$
 define the divergence $\div f: V \rightarrow \mathbb{R}$
by 
\begin{equation}\label{divdef}
\div f (v) = \sum_{w\sim v} f(w,v).
\end{equation}

Note that for any flow $$\sum_{x\in V}\div f=\sum_{\overset{x,y \in V}{y \sim x}}f(x,y)+f(y,x)=0$$ since $f$ is antisymmetric.
For disjoint subsets $A,B \subset V$ and a flow $f$ we say that the flow is from $A$ to $B$ if $\div f(x)=0$ for all vertices $x$ except vertices of $A$ and $B$ and the sum of the divergences across vertices of $A$ and $B$ are non-negative and non-positive respectively. For more about flows see \cite{lpw}*{Chapter 9}.
 
Now we state a key lemma relating Lemma \ref{heightinter} to energy of flows on $\Cyl_n.$
\begin{lemma}\label{heightgreen1} For any $A \subset \Cyl_{n}$  such that $A\cap (C_n \times \{1\})=\emptyset$, $$\frac{1}{n}\sum_{v\in C_n \times \{0\}}H_{A}(v)=\inf_{f}\mathcal{E}(f)$$
where the infimum is taken over all flows 
from $A \bigcap \left(C_n \times \{0\}\right)$ to $ A^c$
such that for $(x,y)\in A$
\begin{eqnarray*}
\div(f)(x,y)=\left\{\begin{array}{cl}
\frac{1}{n} & \mbox{if $y=0$,} \\
0 & \mbox{otherwise.} 
\end{array}
\right.
\end{eqnarray*}
%\end{lemma}
Similarly 
for any $B \subset  \Cyl_{n}$  such that $B\cap (C_n \times \{0\})=\emptyset$,
$$1-\frac{1}{n}\sum_{v\in C_n \times \{1\}}H_{B}(v)=\inf_{f}\mathcal{E}(f)$$
where the infimum is taken over all flows 
from $(C_n \times \{1\}) \cap B$ to $ B^c$
such that for $(x,y)\in B$
\begin{eqnarray*}
\div(f)(x,y)=\left\{\begin{array}{cl}
\frac{1}{n} & \mbox{if $y=0$,} \\
0 & \mbox{otherwise.} 
\end{array}
\right. 
\end{eqnarray*}
\end{lemma}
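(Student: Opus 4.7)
My plan is to prove the first identity by showing that both sides equal the same explicit Green's-function expression on the bottom row; the second identity is its mirror image and I would obtain it by interchanging top and bottom. The strategy combines optional stopping (to relate $H_A$ to a Green's function) with Thomson's principle (to relate the flow infimum to the same Green's function).

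I would first obtain a Green's-function representation of $H_A$ via optional stopping. The $y$-coordinate $Y_t$ of simple random walk on $\Cyl_n$ is a martingale away from the boundary rows $y=0,1$; at $y=0$ exactly one of the four equally likely moves is a step upward and one is a self-loop, producing drift $+\tfrac{1}{4n}$, and at $y=1$ the drift is $-\tfrac{1}{4n}$ by symmetry. Since $A \cap (C_n \times \{1\}) = \emptyset$, a walk run until $\tau = \tau(A^c)$ never visits the top, so drift is accumulated only at visits to $C_n \times \{0\}$. Optional stopping then yields
\[
H_A(v) - y_v \;=\; \frac{1}{4n}\,\mathbb{E}_v\bigl[\#\{t < \tau : Y_t = 0\}\bigr] \;=\; \frac{1}{4n}\sum_{w \in C_n \times \{0\} \cap A} G_A(v,w),
\]
where $G_A(v,w)$ is the expected number of visits to $w$ by the walk from $v$ before leaving $A$. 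In particular, for $v \in C_n \times \{0\}$ one has $H_A(v) = \tfrac{1}{4n}\sum_{w \in C_n \times \{0\} \cap A} G_A(v,w)$.

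I would then apply Thomson's principle. Among all flows with divergence $\mu$ on $A$ (where $\mu(v) = 1/n$ on $C_n \times \{0\} \cap A$ and $0$ elsewhere in $A$), the infimum of $\mathcal{E}$ is attained uniquely by the current flow $i(a,b) = V(a) - V(b)$ with $V$ vanishing on $A^c$ and solving the associated Poisson equation in $A$. Summation by parts (combined with $V \equiv 0$ on $A^c$) gives $\mathcal{E}(i) = \sum_v V(v)\,\mu(v) = \tfrac{1}{n}\sum_{v \in C_n \times \{0\} \cap A} V(v)$. Using the defining identity $(I-P)G_A(\cdot,w) = \delta_w$ on $A$ together with the $4$-regularity of $\Cyl_n$, one checks directly that the function $V(v) = \tfrac{1}{4n}\sum_{w \in C_n \times \{0\} \cap A} G_A(v,w)$ is this voltage function. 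Comparison with the formula from the previous paragraph gives $V \equiv H_A$ on $C_n \times \{0\} \cap A$, and since $H_A$ vanishes on $C_n \times \{0\} \setminus A$ (where $\tau = 0$ and $y_v = 0$), extending the sum to the entire bottom row yields the desired identity. The second identity follows by the symmetric argument: walks now start on $C_n \times \{1\}$, drift is picked up only at $y=1$, and one obtains $1 - H_B(v) = \tfrac{1}{4n}\sum_{w \in C_n \times \{1\} \cap B} G_B(v,w)$ on the source set, which plugs into the same Thomson computation.

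The main bookkeeping obstacle is making the normalization constants line up. The $\tfrac{1}{4n}$ drift at the top and bottom rows arises because one of the four moves there is a self-loop, and a matching factor of $4$ appears when converting between the combinatorial Laplacian and the Markov Laplacian $I-P$ on a $4$-regular graph. The sign conventions in the paper's definition of divergence also need to be kept straight. Once these are in place, the identification $V \equiv H_A$ on the bottom row follows immediately from the symmetry of the Green's function $G_A(v,w) = G_A(w,v)$.
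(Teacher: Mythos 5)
Your proposal is correct and follows essentially the same route as the paper: the paper's Lemma~\ref{stopgre} is exactly your optional-stopping identity (with $G_A(v):=\frac{1}{4n}\E_v[\#\text{ visits to } y=0]$ playing the role of your $\frac{1}{4n}\sum_w G_A(v,w)$), and its Lemmas~\ref{byparts}--\ref{optflow2} are your summation-by-parts plus Thomson's principle step identifying $\nabla G_A$ as the energy-minimizing flow. The only cosmetic difference is your use of the symmetric two-variable Green's function where the paper works directly with the one-variable "visits to the bottom line" function.
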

%\textcolor{red}{write down the proof later today}
The proof is deferred to Appendix \ref{app2}.
We now briefly sketch how it helps in proving Theorem \ref{nonnegativedrift}.

By  \eqref{heightexpress} and Lemma \ref{heightgreen1} one sees that 
\begin{equation*}
\E(h(\sigma_1)-h(\sigma_0)\mid \sigma_0=\sigma ) = 1-\mathcal{E}(f_1)-\E[\mathcal{E}(f_2)],
\end{equation*} where $f_1$ and $f_2$ are flows on $R_{1}(\sigma)$ and $R_{2}(\sigma_{1/2})$ satisfying the divergence conditions mentioned in Lemma \ref{heightgreen1} and have minimal energy.
Note that the second term on the RHS has an extra expectation to average over the random intermediate random configuration $\sigma_{1/2}.$ 

It turns out that the flow with minimal energy on $R_1$ is the ``natural" flow  induced by random walk started uniformly on $C_n \times \{0\}$ and killed on exiting $R_1$ and similarly for $R_2.$

Theorem \ref{nonnegativedrift} is now proved by estimating  $\mathcal{E}({f}_1)$ and $\mathcal{E}({f}_2).$ Consider the voltage function on $\Cyl_n$ that is linear in the height. $\mathcal{E}({f}_{1})$ can be interpreted as the voltage difference between $C_n \times \{0\}$ and the boundary of $R_1$ with glued boundary condition. The above heuristics are formalized in Appendix \ref{app2}. Now if the boundary is not horizontal enough, gluing vertices at different potentials causes a voltage drop. We estimate this voltage drop en route to proving Theorem \ref{nonnegativedrift} by constructing suitable flows on $R_1$ and $R_2.$

\subsection{Towards the proof of Theorem \ref{nonnegativedrift}}
For technical purposes we introduce a few definitions. Recall $R_1$ and $R_2$ from Definition \ref{areadef2}.
\begin{definition}\label{conn1}
Given $\sigma\in \Omega\cup \Omega'$ 
let $\tilde{R}_1$  be the set of all points in $\Cyl_n$ 
reachable by a monochromatic path of color $1$ in $\sigma$ from a point in  
$C_n \times \{0\}$ which does not hit the set $C_n \times \{1\}$. 

Similarly define $\tilde{R}_2.$ See Figure~\ref{bend1}.
\end{definition}

For $i=1,2$ define  
\begin{equation}\label{stopping1}
\tau_i=\tau(R_i^\mathsf{c})
\end{equation}
 to be the exit time from  $R_i$. Also let 
\begin{equation}\label{stopping2}
\tilde{\tau}_1=\tau(\tilde{R}_1^\mathsf{c})=\tau_{1}\wedge \tau (C_n \times \{1\}) 
\end{equation}
and similarly $\tilde{\tau}_2.$

\begin{remark}\label{monotoneheight}
Recalling the notation in \eqref{stoppedheightgeneral} we see that 
$$H_{{R}_{i}(\sigma)}(v)\le H_{\tilde{R}_{i}(\sigma)}(v)$$ since on the event $\tilde{\tau_1}<\tau_1,$ $y_{\tilde{\tau_1}}=1$ and hence $$y_{\tau_1}< y_{\tilde{\tau}_1}=1.$$ 
\end{remark}

For notational simplicity we write 
\begin{align*}
H_{1}(\cdot) &= H_{1,\sigma}(\cdot)= {H}_{R_{1}(\sigma)}(\cdot)\\
H_{2}(\cdot) &= H_{2,\sigma}(\cdot)= {H}_{R_{2}(\sigma)}(\cdot)\\
\tilde{H}_{1}(\cdot) &= \tilde{H}_{1,\sigma}(\cdot)= {H}_{\tilde{R}_{1}(\sigma)}(\cdot)\\
\tilde{H}_{2}(\cdot) &= \tilde{H}_{2,\sigma}(\cdot)= {H}_{\tilde{R}_{2}(\sigma)}(\cdot).
\end{align*}
We use the first notation when the underlying $\sigma$ is clear from context.

For every $k \in C_n,\sigma \in \Omega\cup \Omega'$ let
\begin{align}\label{heightboundary}
y_k^*(\sigma)&= \inf\{y:(k,y)\notin  \tilde{R}_1\}\\
\label{heightboundary1}
y_k^{**}(\sigma)&= \sup\{y:(k,y)\notin  \tilde{R}_2\}. 
\end{align}
See Figure~\ref{f.twobnd1}.
\begin{figure}{} 
\centering
\includegraphics[scale=.5]{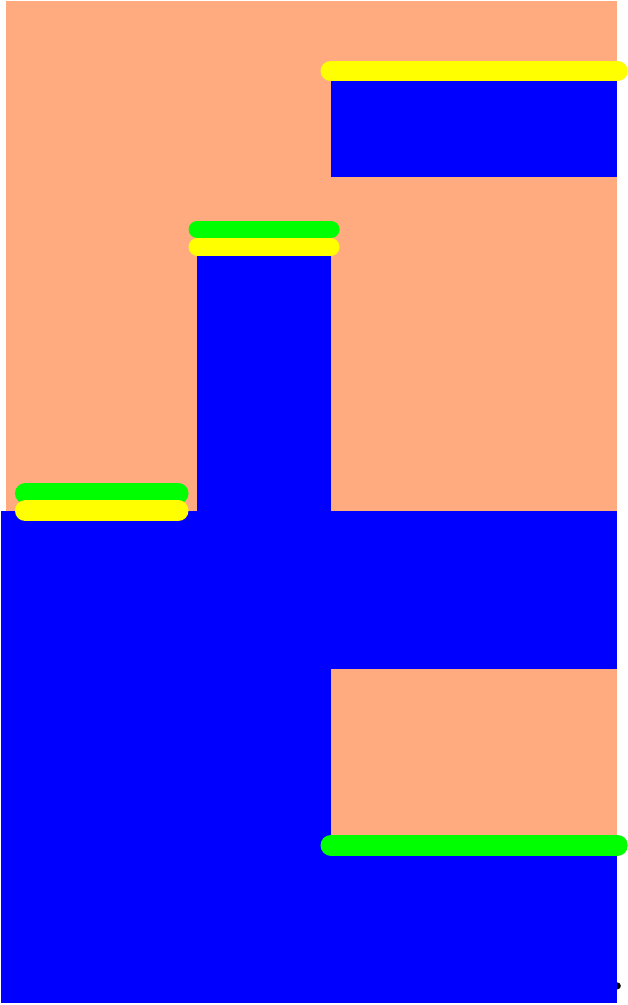}
\caption{ The green and yellow segments depict $y^{*}_k$ and $y^{**}_k$ respectively for $k \in C_n$.}
\label{f.twobnd1}
\end{figure}
\begin{figure}{} 
\centering
\includegraphics[scale=.6]{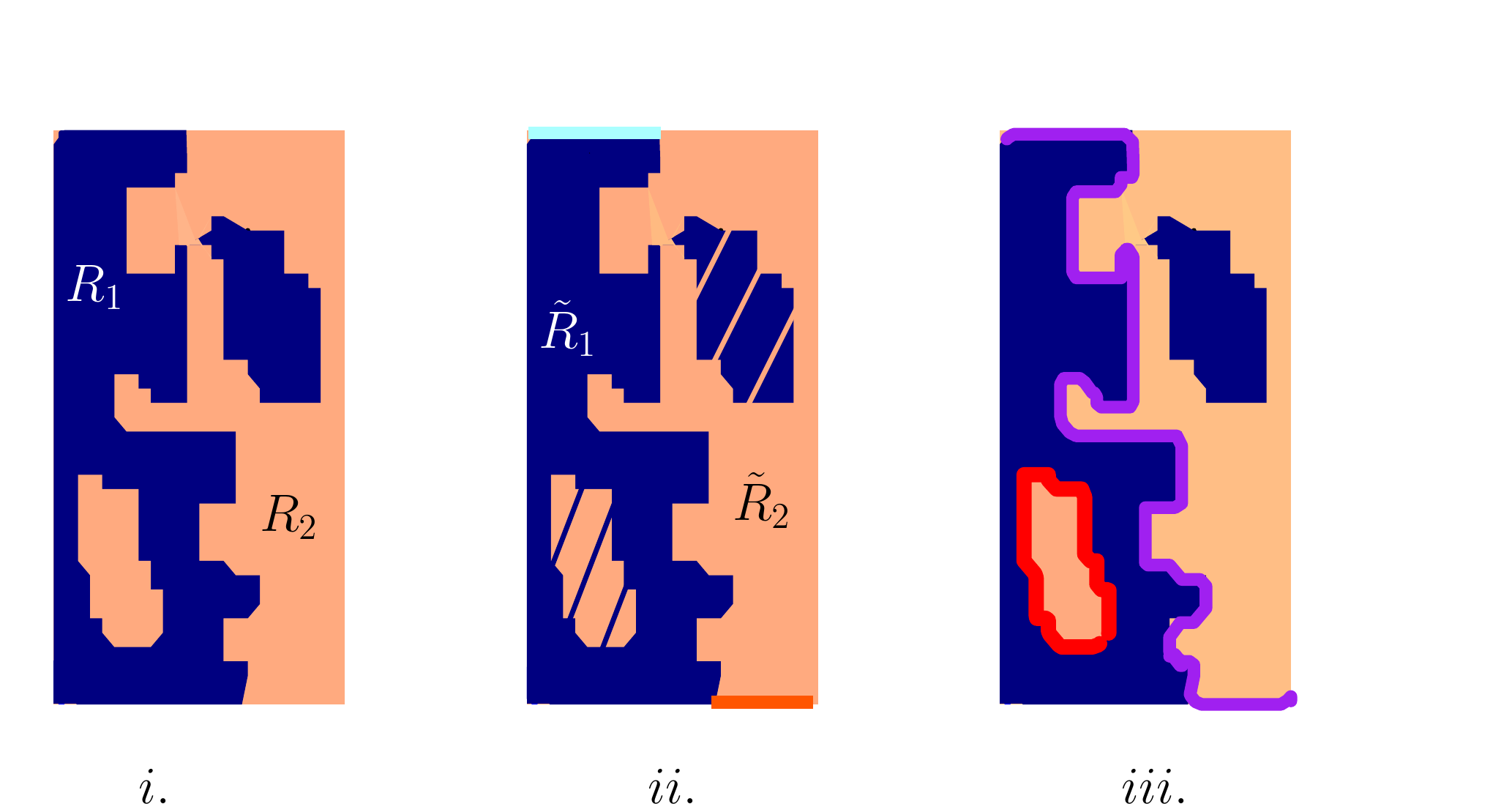}
\caption{$i.$ A general configuration with $R_1$ and $R_2$. Note than in $ii.$ the shaded red region and the top light-blue line ($R_1 \cap C_n\times \{1\}$) are outside $\tilde{R}_1$. Similarly the shaded blue region and  the bottom orange line are outside $\tilde{R}_2$. Notice the difference between $R_i$ and $\tilde{R}_i$. In $iii.$  $\partial^{out}{\tilde{R}_1}$ is represented by the purple curve. The rest of $\partial{\tilde{R}_1}$ (bottom left) is represented by the red curve. }
\label{bend1}
\end{figure}

As usual we will suppress the dependence on $\sigma$ and denote $y_k^{*}(\sigma)$, $y_k^{**}(\sigma)$ by $y_k^{*}$ and $y_k^{**}$ respectively when $\sigma$ is clear from the context.
\begin{lemma}\label{monotone} For $\sigma \in \Omega\cup \Omega'$
\begin{eqnarray}\label{mon1}
\frac{1}{n}\sum_{k \in C_n} H_{1}(k,0)&\le & \frac{1}{n} \sum_{k \in C_n}y_k^{*}\\
\label{mon2}
\frac{1}{n}\sum_{k \in C_n}H_{2}(k,0) & \ge & \frac{1}{n}\sum_{k \in C_n}y_k^{**}.
\end{eqnarray}
\end{lemma}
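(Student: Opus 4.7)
The plan is to replace $H_i$ by the auxiliary $\tilde{H}_i$ using Remark~\ref{monotoneheight}, apply Lemma~\ref{heightgreen1} to rewrite the two averages as infima of flow energies on $\tilde{R}_1$ and $\tilde{R}_2$, and then exhibit a column-by-column vertical test flow in each case whose energy is exactly the right-hand side. Remark~\ref{monotoneheight} directly gives $H_1 \le \tilde{H}_1$, and the top-bottom mirror of its proof gives $H_2 \ge \tilde{H}_2$: on the event $\{\tilde{\tau}_2<\tau_2\}$, the walker must exit $\tilde{R}_2$ through a bottom-row vertex, so $y_{\tilde{\tau}_2}=0\le y_{\tau_2}$. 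Reading the red-walker side of~\eqref{mon2} at $(k,1)$ (since the red walker is released from $C_n\times\{1\}$), it therefore suffices to prove $\frac{1}{n}\sum_k \tilde{H}_1(k,0) \le \frac{1}{n}\sum_k y^*_k$ and $\frac{1}{n}\sum_k \tilde{H}_2(k,1) \ge \frac{1}{n}\sum_k y^{**}_k$.

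Since $\tilde{R}_1\cap(C_n\times\{1\})=\emptyset$ and $\tilde{R}_2\cap(C_n\times\{0\})=\emptyset$ by construction, both halves of Lemma~\ref{heightgreen1} apply, expressing $\frac{1}{n}\sum_k\tilde{H}_1(k,0)$ and $1-\frac{1}{n}\sum_k\tilde{H}_2(k,1)$ as infima of flow energies on $\tilde{R}_i$ with a $1/n$-unit source at each appropriate base vertex and sinks in $\tilde{R}_i^c$. The central step is then to construct explicit test flows matching the upper bounds $\frac{1}{n}\sum_k y^*_k$ and $\frac{1}{n}\sum_k(1-y^{**}_k)$.

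For~\eqref{mon1}, I take the vertical flow $f_1$: in each column $k$ with $y^*_k>0$, push $1/n$ units of current straight up along the $n y^*_k$ vertical edges from $(k,0)$ to $(k,y^*_k)$. The infimum definition of $y^*_k$ forces every interior vertex on this segment to lie in $\tilde{R}_1$, while the terminal vertex lies in $\tilde{R}_1^c$, so the divergence conditions of Lemma~\ref{heightgreen1} are satisfied. Each active directed edge carries flow $\pm1/n$, giving
\[
\mathcal{E}(f_1) \;=\; \sum_{k} n y^*_k \cdot \tfrac{1}{n^2} \;=\; \tfrac{1}{n}\sum_k y^*_k,
\]
and hence~\eqref{mon1}. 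For~\eqref{mon2}, the mirror downward flow $f_2$ from $(k,1)$ to $(k,y^{**}_k)$ has energy $\frac{1}{n}\sum_k(1-y^{**}_k)$ by the same calculation, and plugging into the second half of Lemma~\ref{heightgreen1} yields~\eqref{mon2}.

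I do not foresee a substantial obstacle: the test flows are essentially forced by the definitions of $y^*_k$ and $y^{**}_k$ as infima/suprema of the missing heights, and the computation is just the Dirichlet principle applied column by column. The only step requiring some care is the monotonicity reduction $H_2\ge\tilde{H}_2$ for the red walker, which requires noting that the top-bottom dual of Remark~\ref{monotoneheight} reverses the inequality compared with its $i=1$ version.
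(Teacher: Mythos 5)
Your proposal is correct and follows essentially the same route as the paper: reduce to $\tilde H_i$ via the monotonicity remark, invoke Lemma~\ref{heightgreen1}, and exhibit the column-by-column vertical test flow of energy $\frac1n\sum_k y_k^*$ (the paper writes out only \eqref{mon1} and declares \eqref{mon2} symmetric, which is exactly the mirror flow you construct). You are also right, and slightly more careful than the stated Remark~\ref{monotoneheight}, that the monotonicity reverses to $H_2\ge\tilde H_2$ for the red walker, which is the direction actually needed for \eqref{mon2}.
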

\begin{proof}We will only prove \eqref{mon1} since the arguments for \eqref{mon2} are symmetric.  By Remark \ref{monotoneheight} it suffices to prove that 
\begin{equation}\label{argument1}
\sum_{k \in C_n}\tilde{H}_{1}(k,0) 
\le \sum_{k \in C_n}y_k^{*}.
\end{equation}
Note that $\tilde{R}_1$ satisfies the hypothesis of Lemma \ref{heightgreen1}. Thus using the lemma we will prove the above by  finding a flow $f^*$  from $\tilde{R}_1\cap (C_n \times \{0\})$
 to $ \tilde{R}^c_1$ such that 
for all $(k,y)\in \tilde{R}_1$
\begin{eqnarray*}
\div(f^*)(k,y)=\left\{\begin{array}{cl}
\frac{1}{n} & \mbox{if $y=0$,} \\
0 & \text{ otherwise} 
\end{array}
\right.
\end{eqnarray*}
and  
\begin{equation}\label{argument4}
\mathcal{E}(f^*)= \frac{1}{n}\sum_{k \in C_n}y_k^{*}.
\end{equation}

Let
\begin{eqnarray}\label{flowdef1}
f^*((k,y),(k,y+\frac{1}{n}))&=&-\frac{1}{n}\mbox{ for } (k,y)\in \tilde{R}_1,\,\,\, 0 \le y < y_k^*
\end{eqnarray}
and let every other edge have zero flow.
It is easy to check that $f^*$ satisfies the required divergence conditions and \eqref{argument4}. See Figure~\ref{bend} $i.$
\end{proof}

The next lemma can be thought of as a weaker version of  Theorem \ref{nonnegativedrift}. It shows that at all times the height function has an ``almost" non-negative drift whereas  Theorem \ref{nonnegativedrift} states that outside $\Omega_{\e}$ the drift becomes significantly stronger.
\begin{lemma}\label{short1}For all $\sigma\in \Omega$
$$\E(h(\sigma_1)-h(\sigma_0)\mid \sigma_0=\sigma )\ge -\frac{1}{n}$$
where $h(\cdot)$ is the height function defined in \eqref{wf}.
\end{lemma}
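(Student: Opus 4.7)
The plan is to apply Lemma \ref{heightinter} to rewrite the expected drift of $h$ as a difference of two averaged exit heights, and then to invoke Lemma \ref{monotone} together with its symmetric lower-bound analogue for the red walkers (proved by an identical straight-line flow from $C_n \times \{1\}$ downward in each column of $\tilde{R}_2$). Chaining the two inequalities yields
\[
\E(h(\sigma_1)-h(\sigma_0)\mid \sigma_0=\sigma) \ \geq\ \frac{1}{n}\,\E_\sigma\!\left[\sum_{k \in C_n} y_k^{**}(\sigma_{1/2})\right] - \frac{1}{n}\sum_{k \in C_n} y_k^{*}(\sigma),
\]
so it suffices to prove the deterministic, column-wise bound $y_k^{**}(\sigma_{1/2}) \geq y_k^{*}(\sigma) - \tfrac{1}{n}$ for every $k \in C_n$ and every realization of $\sigma_{1/2}$. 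Summing this over the $n$ columns and multiplying by $1/n$ then delivers $\E(h(\sigma_1)-h(\sigma_0)\mid\sigma_0=\sigma)\geq -\tfrac{1}{n}$.

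For the pointwise bound, first suppose $y_k^{*}(\sigma) > 0$. By definition \eqref{heightboundary}, the vertex $v_k := (k,\, y_k^{*}(\sigma)-\tfrac{1}{n})$ lies in $\tilde{R}_1(\sigma)$, so it is blue under $\sigma_0$. The half-step $\sigma_0 \to \sigma_{1/2}$ only converts a single red vertex to blue, so $v_k$ remains blue under $\sigma_{1/2}$. Since $\tilde{R}_2(\sigma_{1/2})$ consists solely of red vertices, $v_k \notin \tilde{R}_2(\sigma_{1/2})$, and the definition \eqref{heightboundary1} of $y_k^{**}$ as a supremum over the complement of $\tilde{R}_2$ in column $k$ then forces $y_k^{**}(\sigma_{1/2}) \geq y_k^{*}(\sigma) - \tfrac{1}{n}$.

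In the remaining case $y_k^{*}(\sigma)=0$, the bound reduces to $y_k^{**}(\sigma_{1/2}) \geq -\tfrac{1}{n}$, which is automatic from the asymmetric definition of $\tilde{R}_2$: its defining paths must avoid $C_n \times \{0\}$ (Definition \ref{conn1}), so $\tilde{R}_2 \cap (C_n\times\{0\})=\emptyset$ for every configuration. In particular $(k,0) \notin \tilde{R}_2(\sigma_{1/2})$, giving $y_k^{**}(\sigma_{1/2}) \geq 0$.

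The only delicate point is this boundary case $y_k^*(\sigma)=0$, and the structural fact that saves us is exactly the asymmetry in Definition \ref{conn1} which pins $y_k^{**}$ to be nonnegative in every configuration. No probabilistic input beyond what is already encoded in Lemmas \ref{heightinter} and \ref{monotone} is required; the bound $-1/n$ is essentially sharp when the configuration has a clean horizontal interface, consistent with the intuition that $h$ carries no genuine drift at equilibrium.
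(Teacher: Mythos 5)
Your proof is correct and follows essentially the same route as the paper: reduce via Lemmas \ref{heightinter} and \ref{monotone} to comparing $\sum_k y_k^{**}(\sigma_{1/2})$ with $\sum_k y_k^{*}(\sigma_0)$, then establish the column-wise bound $y_k^{**}(\sigma_{1/2}) \ge y_k^{*}(\sigma_0)-\tfrac1n$. The only cosmetic difference is that the paper splits this last bound into the two observations $y_k^{**}(\sigma_{1/2})\ge y_k^{**}(\sigma_0)$ and $y_k^{**}(\sigma_0)\ge y_k^{*}(\sigma_0)-\tfrac1n$, whereas you prove the combined inequality in one step via the persistence of the blue vertex $(k,y_k^{*}-\tfrac1n)$.
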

Note that the above lemma 
is sharp for $\sigma_{0}=\sigma_{\max}$ \eqref{maxbnd}, 
up to a constant factor.

\begin{proof}
By Lemma \ref{heightinter}
$$
\E(h(\sigma_1)-h(\sigma_0)\mid \sigma_0=\sigma ) = 
\frac{1}{n}\E\left[\sum_{w\in C_n \times \{1\}}H_{2,\sigma_{1/2}}(w)\right]
-\frac{1}{n}\sum_{v\in C_n \times \{0\}}H_{1,\sigma_{0}}(v).
$$
Thus by Lemma \ref{monotone}   
\begin{equation}\label{recall1}
\E(h(\sigma_1)-h(\sigma_0)\mid \sigma_0=\sigma )\ge 
\frac{1}{n}\E\left[\sum_{k \in C_n}y_k^{**}(\sigma_{1/2})\right] 
-\frac{1}{n}\sum_{k \in C_n}y_k^{*}(\sigma_0)
\end{equation}
where $y_k^*(\cdot)$ and $y_k^{**}(\cdot)$ are defined in \eqref{heightboundary} and \eqref{heightboundary1} respectively. 
We make the following two easy observations:
For all $\sigma_0=\sigma\in \Omega$ and $k \in C_n,$
\begin{eqnarray}\label{2obs1}
y_k^{**}(\sigma_{1/2}) &\ge & y_k^{**}(\sigma_{0})\\
\label{2obs2}
y_k^{**}(\sigma_0)-y_k^*(\sigma_0) &\ge & -\frac{1}{n}.
\end{eqnarray}
The first one is a straightforward consequence of the fact that $\sigma_{1/2}$ has one more blue vertex than 
$\sigma_{0}$.
To see the second one, notice that for all  $k\in \frac{1}{n}\mathbb{Z}/{n\mathbb{Z}},$ either $y_k^*(\sigma_0)=0$ or $$\{(k,0),\ldots ,(k,y_k^*(\sigma_0)-\frac{1}{n})\}\in \tilde{R}_1.$$ 
In the first case \eqref{2obs2} follows since $y_k^{**}(\sigma_0)\ge 0$. 
In the second case,  $\tilde{R}_1$ and $\tilde{R}_2$ are disjoint and therefore $y_k^*(\sigma_0)-\frac{1}{n}\notin \tilde{R}_2$. Hence by definition \eqref{2obs2} follows.

Thus 
\begin{eqnarray}\label{arg7}
\E(h(\sigma_1)-h(\sigma_0)\mid \sigma_0=\sigma )&\ge 
& \frac{1}{n}\E\left[\sum_{k \in C_n}y_k^{**}(\sigma_{1/2})\right] 
-\frac{1}{n}\sum_{k \in C_n}y_k^{*}(\sigma_0)\\
& \ge & \frac{1}{n}\sum_{k \in C_n}y_k^{**}(\sigma_{0})- \frac{1}{n}\sum_{k \in C_n}y_k^{*}(\sigma_0)\\
\label{arg9}
&\ge & -\frac{1}{n}.
\end{eqnarray}
The first inequality is \eqref{recall1}, the second and third follow immediately from  the above two observations. Hence we are done.
\end{proof}

\subsection{Bending flows}\label{secpot1}
%\textcolor{red}{add explanation}
 \begin{figure}
\centering
\includegraphics[scale=.6]{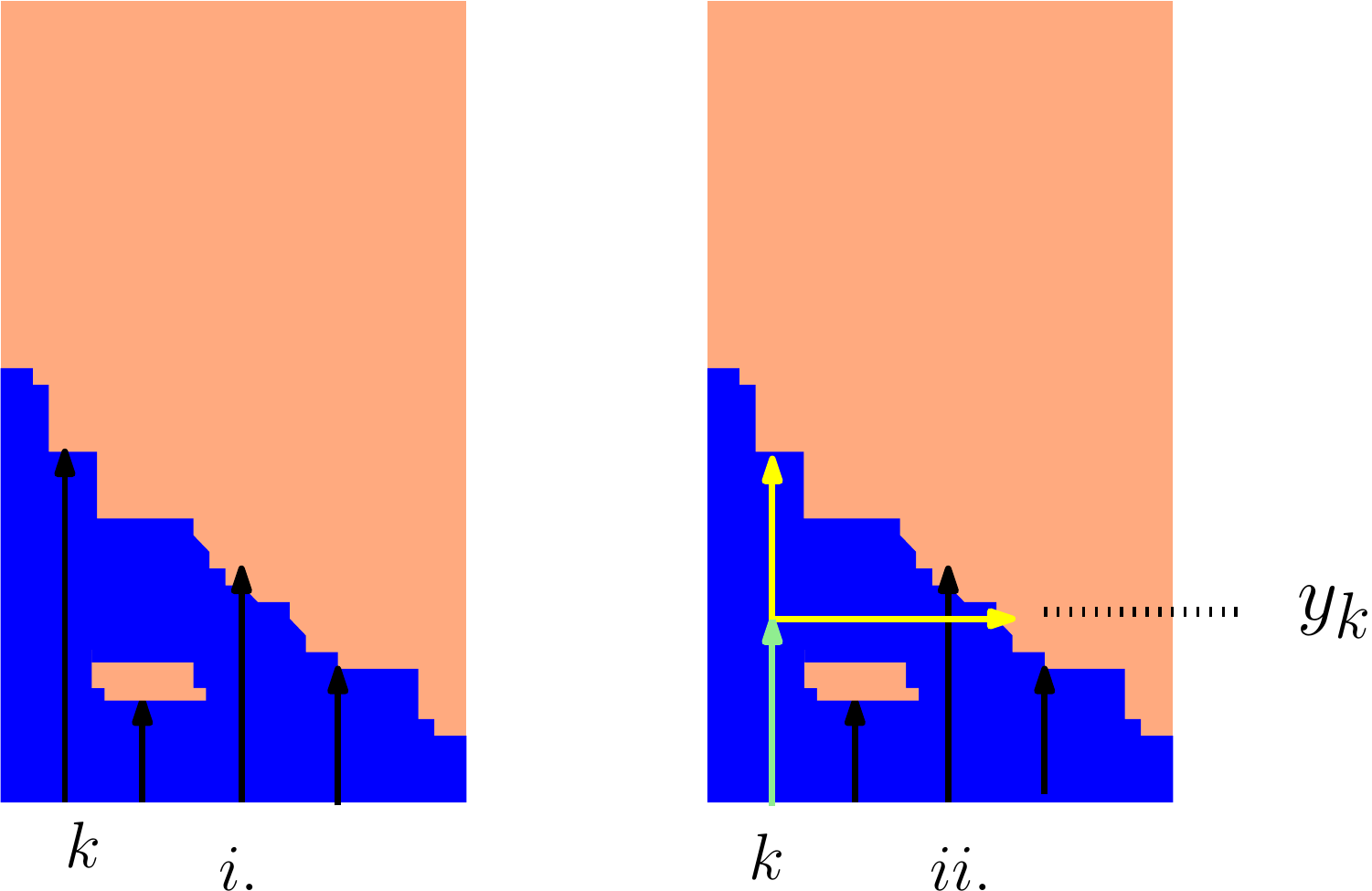}
\caption{Bending the trivial flow to improve upon Lemma \ref{short1}. In $i.$ every vertical edge carries mass $\frac{1}{n}.$ In $ii.$ the flow carries mass $\frac{1}{n}$ up to height $y_k$ where it splits into $\frac{d_k}{n}$ (vertically) and $1-\frac{d_k}{n}$ (horizontally). The $d_k$'s  are chosen  to be $\frac{1}{1+y_k^{*}-y_k}$. Also note the flows are split only for the columns where $y^*_{k}$ and $y^{**}_{k}$
are close to each other to beat \eqref{2obs2} used in Lemma \ref{short1}. \label{bend}}
\end{figure}
%\textcolor{red}{drop the two arrows }

We prove Theorem \ref{nonnegativedrift} in this subsection by improving upon Lemma \ref{short1} which was 
 proved using the trivial flow on the cylinder graph that sends all the mass along the vertical edges. However if the boundary of $R_1$ (Definition \ref{areadef2}) is not horizontal then flows that send some mass along the horizontal edges are more optimal. This is because the horizontal boundary edges have significant harmonic measure and the random walk started from $C_n \times \{0\}$  has a significant chance of exiting $R_1$ through these edges. Thus to prove Theorem \ref{nonnegativedrift} we modify the trivial flow by sending some mass along the horizontal edges. See Figure~\ref{bend}. We begin with a few definitions.

For any $m\in (0,1)$ let us define the following sets:
\begin{eqnarray} \label{goods1}
W & =& \{\ell:y^{**}_{\ell}\ge m\}\\
\label{goods2}
W' & = &\{\ell:y^{*}_{\ell}\le m\}.
\end{eqnarray}
Also let us define the following properties for $c>0$ and $m\in (0,1)$:
\begin{prop}\label{fact1}
There exist $cn/4$ horizontal lines below the level $y=m-c/4$ 
that intersect $\Cyl_{n}\setminus {\tilde{R}_1}$.
\end{prop}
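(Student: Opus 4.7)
The statement of Property~\ref{fact1} is phrased as a named condition (the \emph{prop} environment here denotes ``Property''), and the paper will invoke it later when building bending flows. The natural task, for which I sketch a proof, is to show that every configuration $\sigma \in \Omega_\e^{\mathsf{c}}$ satisfies Property~\ref{fact1} (or its evident symmetric counterpart with $\tilde{R}_2$) for suitable constants $c = c(\e)$ and $m$. The plan is a direct pigeonhole count that converts ``many bad levels'' (in the sense of Definition~\ref{defoptimalconf}) into ``many horizontal lines missing $\tilde{R}_1$''.

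By Definition~\ref{defoptimalconf}, if $\sigma \notin \Omega_\e$ then at least one of the following holds: (i) more than $\e n$ distinct levels $y \le \alpha$ contain some red vertex, or (ii) more than $\e n$ distinct levels $y \ge \alpha$ contain some blue vertex. The two cases are symmetric under swapping colors and reflecting in $y$; the second will yield the analogous property for $\tilde{R}_2$, so I focus on (i). Fix any such bad level $y \le \alpha$ and a red vertex $(x,y)$ on it. Since $R_1$ consists only of blue vertices we have $(x,y) \notin R_1$, and since $\tilde{R}_1 \subseteq R_1$ this gives $(x,y) \in \Cyl_n \setminus \tilde{R}_1$. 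Thus the horizontal line $C_n \times \{y\}$ meets $\Cyl_n \setminus \tilde{R}_1$, and in case (i) one obtains at least $\e n$ horizontal lines at heights $\le \alpha$ intersecting $\Cyl_n \setminus \tilde{R}_1$.

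To reach the exact form of Property~\ref{fact1}, I would take $m = \alpha$ and $c = \e$. The half-open strip $(\alpha - \e/4, \alpha]$ contains at most $\lfloor \e n / 4\rfloor + 1$ discrete levels, so for $n$ large at least $\e n - \e n/4 \ge cn/4$ of the bad levels lie strictly below $m - c/4$. This verifies Property~\ref{fact1} with $c=\e$ and $m=\alpha$. An entirely symmetric argument in case (ii) gives the mirror statement for $\tilde{R}_2$ with ``above $m+c/4$''.

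The main obstacle will not be establishing the property itself, which is the short counting argument above, but leveraging it. The harder step, as foreshadowed in Figure~\ref{bend} and the flow construction described there, is to use the $cn/4$ special ``low'' breaks in $\tilde{R}_1$ (those columns $k$ for which $y_k^* \le m - c/4$) to build a bending flow $f^*$ that routes a fraction $1 - d_k/n$ of the unit of mass in column $k$ onto horizontal edges just below the break. One must then compare the energy of $f^*$ to the trivial bound $\tfrac{1}{n}\sum_k y_k^*$ via the electrical-network framework of Section~\ref{secef}: choosing $d_k = \frac{1}{1+y_k^*-y_k}$ minimises the column-wise energy, and summing the resulting gain over the $cn/4$ special columns should produce a constant improvement $\Omega(c^3)$ over Lemma~\ref{short1}. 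Combined with the analogous estimate on the red side via $\tilde{R}_2$, this yields the strictly positive drift $a = a(\e)$ claimed in Theorem~\ref{nonnegativedrift}.
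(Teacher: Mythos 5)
You correctly recognized that Property~\ref{fact1} is a named hypothesis rather than a theorem, and your pigeonhole count (bad levels below $\alpha$ give red vertices, red vertices lie outside $\tilde{R}_1$, and at most $\e n/4+1$ levels fit in the strip of width $\e/4$) is sound; it is essentially the computation the paper performs in case (iii) of the proof of Theorem~\ref{nonnegativedrift}. The issue is your fixed choice $m=\alpha$. Property~\ref{fact1} is only ever invoked through Lemma~\ref{maincondition}, whose hypothesis couples the property to the companion condition $|W|\ge dn$ with $W=\{\ell: y_\ell^{**}\ge m\}$ for the \emph{same} $m$ (resp.\ $|W'|\ge dn$ for Property~\ref{fact2}). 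With $m=\alpha$ there is no a priori reason for $W$ to be large: if the blue region connected to the bottom sits well below $\alpha$ and $\tilde{R}_2$ reaches far down, then $y_\ell^{**}$ can be below $\alpha$ for most columns and $W$ can be nearly empty, so your verified instance of the property cannot be fed into Lemma~\ref{maincondition}. This is precisely why the paper chooses $m$ adaptively --- as $\inf\mathcal{Y}$, $\sup\mathcal{Y}$, or their midpoint, where $\mathcal{Y}$ is the height set of $\partial^{out}\tilde{R}_1$ --- and uses the $*$-connectivity of that boundary (Remark~\ref{connproof}) together with \eqref{triv1}--\eqref{triv2} to guarantee $|W''|\ge n/3$ and $|W'''|=n$ for those choices. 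The resulting three-case analysis (boundary spread out; boundary concentrated but far from $\alpha$; boundary concentrated near $\alpha$, where your $\Omega_\e^{\mathsf c}$ count finally enters) is the part your proposal elides, and it is where the actual work of matching Property~\ref{fact1} to a usable $m$ lies. Your description of the bending-flow step that exploits the property is accurate but is the content of Lemma~\ref{maincondition}, not of establishing the property itself.
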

\begin{prop}\label{fact2}
There exist $cn/4$ horizontal lines above the level $y=m+c/4$ 
that intersect $\Cyl_n\setminus \tilde{R}_2$.
\end{prop}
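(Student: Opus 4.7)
Since Property \ref{fact2} is a structural condition on configurations, I interpret the task as: under the hypothesis $\sigma \in \Omega_\e^c$ of Theorem \ref{nonnegativedrift}, exhibit choices of $m$ and $c = c(\e)$ such that the condition is satisfied. The argument is a short combinatorial unpacking of the relevant definitions.

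First I would split on the hypothesis. By Definition \ref{defoptimalconf}, $\sigma \in \Omega_\e^c$ means that at least one of the two counts in \eqref{optimalconf1} exceeds $\e n$, and the two cases are symmetric to one another under swapping the roles of color $1$ and color $2$ (and the bottom and top boundaries of $\Cyl_n$). The case producing Property \ref{fact2} is the one where at least $\e n$ rows at heights $y \ge \alpha$ contain a blue site; the symmetric case yields Property \ref{fact1}.

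Next I would use the key observation that $\tilde R_2$ consists entirely of \emph{red} sites, by Definition \ref{conn1}. Therefore every blue site belongs to $\Cyl_n \setminus \tilde R_2$, and so each of the at least $\e n$ bad rows produced above intersects $\Cyl_n \setminus \tilde R_2$. The last step is to fix the constants. Take $c = \e /2$, so the required bound is $c n / 4 = \e n / 8$. Assuming $m \le \alpha$, a constraint that is automatic in the setting of the bending-flow construction where $m$ approximates the interface height from below, the window $[m, m + c/4]$ contains at most $\lceil n c /4 \rceil + 1$ distinct row heights. Hence at least $\e n - \e n / 8 - 1 \ge c n /4$ of the bad rows lie strictly above $m + c/4$ for $n$ sufficiently large, as claimed.

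The main obstacle lies in coordinating the choice of $m$ with the surrounding construction: the bending-flow argument needs a single $m$ that makes Properties \ref{fact1} and \ref{fact2} \emph{both} hold simultaneously while also making $W \cap W'$ large enough. To address this I would take $m$ to be (roughly) the median over columns $\ell \in C_n$ of the interface heights $y^*_\ell$ or $y^{**}_\ell$, and split into two cases. If this median is close to $\alpha$ the argument above applies directly; if it is far from $\alpha$, say $m > \alpha + \e / 2$, then by the definitions of $y^*_\ell$ and $y^{**}_\ell$ a positive fraction of columns must have $\tilde R_2$ failing to extend down to level $m$, and a row-column double counting argument then produces $cn/4$ rows above $m + c/4$ which miss $\tilde R_2$. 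Working out this case split carefully, and choosing $c$ uniformly in both cases, is where the bulk of the verification effort would go.
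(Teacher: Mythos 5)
Your reading of the statement is right: Property \ref{fact2} is a condition on $(\sigma,m,c)$, and the real content is the verification, inside the proof of Theorem \ref{nonnegativedrift}, that suitable $m$ and $c$ exist for every $\sigma\in\Omega_\e^{\mathsf c}$. Your core mechanism --- $\tilde R_2$ is entirely red, so any row containing a blue vertex above $m+c/4$ witnesses the property, and $\Omega_\e^{\mathsf c}$ supplies at least $\e n$ such rows above $\alpha$ in one of its two sub-cases --- is exactly what the paper uses in its cases (ii) and (iii). The arithmetic with $c=\e/2$ is fine as far as it goes.

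The genuine gap is the one you flag but do not close: the coordination of $m$ with the requirement $|W'|\ge dn$, where $W'=\{\ell: y^*_\ell\le m\}$. Your argument needs $m+c/4\lesssim\alpha$, but Property \ref{fact2} is only usable in Lemma \ref{maincondition} when paired with $W'$ large, and the scenario consistent with your hypothesis (many blue sites above $\alpha$) is precisely one in which $y^*_\ell>\alpha$ for most columns, making $W'$ empty for any $m\le\alpha$. The paper resolves this by taking $m=\sup\mathcal Y$, the top height of $\partial^{out}\tilde R_1$, which forces $|W'|=n$ by \eqref{triv1}, and then running a three-way case split on $\mathcal Y$ to show the property still holds for this possibly larger $m$. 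In the case where $\mathcal Y$ has spread exceeding $\e/20$, the mechanism for Property \ref{fact2} is entirely different from yours: it uses the $*$-connectedness of $\partial^{out}\tilde R_1$ (Remark \ref{connproof}, which rests on a topological lemma from \cite{at1}) to produce $cn/4$ boundary heights above $m+c/4$, together with the observation that every horizontal line below $\sup\mathcal Y$ meets the blue cluster $R_1$ and hence is not contained in $\tilde R_2$. Your median-of-$y^*_\ell$ proposal does not engage with this connectivity input, and the ``row-column double counting'' you invoke for the far-from-$\alpha$ case is not supplied; since you yourself identify this as where ``the bulk of the verification effort would go,'' the proposal as written leaves the essential difficulty unresolved.
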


The statement of the next lemma roughly says that if there are sufficiently many bad levels below or above a certain horizontal line then the height function experiences a strong drift.
The proof uses the idea of bending flows outlined above.

\begin{lemma}\label{maincondition}Given $c,d>0$ suppose $\sigma\in \Omega$ is such that one of the following holds:
for some $m,$
\begin{itemize} 
\item[$i.$] Property \ref{fact1} holds for $c$  and $|W|\ge dn$.
\item[$ii.$] Property \ref{fact2} holds for $c$ and $|W'|\ge dn$.
\end{itemize}
Then for large enough $n,$
\begin{equation}\label{shortcut}
\E(h(\sigma_1)-h(\sigma_0)\mid \sigma_0=\sigma)\ge a
\end{equation}
where $a=a(c,d)$.
\end{lemma}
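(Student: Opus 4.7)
The plan is to reduce to case $i$ by the up--down and blue--red symmetry of the model, and then to sharpen the trivial-flow estimate underlying Lemma~\ref{short1}. Combining Lemma~\ref{heightinter} with the inequality~\eqref{2obs1}, the $H_2$-analogue~\eqref{mon2} of Lemma~\ref{monotone}, and Remark~\ref{monotoneheight}, one has
\begin{equation*}
\E\bigl(h(\sigma_1) - h(\sigma_0) \bigm| \sigma_0 = \sigma\bigr) \;\geq\; \frac{1}{n} \sum_{k \in C_n} y^{**}_k(\sigma) \;-\; \frac{1}{n} \sum_{v \in C_n \times \{0\}} \tilde H_{1,\sigma}(v).
\end{equation*}
By Lemma~\ref{heightgreen1} applied to $A = \tilde R_1(\sigma)$, the subtracted term coincides with $\inf_f \mathcal{E}(f)$ over admissible flows. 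So it suffices to exhibit an admissible flow $\hat f$ on $\tilde R_1(\sigma)$ whose energy is smaller than the trivial value $\frac{1}{n}\sum_k y^*_k(\sigma)$ by at least $a + \frac{1}{n}$ for some $a = a(c,d) > 0$; in view of~\eqref{2obs2} this will yield $\E(\Delta h \mid \sigma_0 = \sigma) \geq a$.

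The flow $\hat f$ is built by \emph{bending} the trivial vertical flow, as illustrated in Figure~\ref{bend}(ii). Case~$i$ supplies two matched resources on $\tilde R_1$: Property~\ref{fact1} provides at least $cn/4$ puncture heights $z_1 < \cdots < z_{cn/4}$ in $[0, m - c/4)$, each equipped with a column $k_j$ satisfying $(k_j, z_j) \notin \tilde R_1$; and the hypothesis $|W| \geq dn$ provides at least $dn$ ``tall'' columns $\ell$ with $y^{**}_\ell \geq m$. Split $W = W_A \sqcup W_B$ with $W_A = \{\ell \in W : y^*_\ell \geq m - c/8\}$. If $|W_B| \geq dn/2$, then $y^{**}_\ell - y^*_\ell \geq c/8$ for each $\ell \in W_B$ and the trivial flow alone already gives drift at least $dc/16 - 1/n > 0$, finishing the proof. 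Otherwise $|W_A| \geq dn/2$, and we bend the flow in each $\ell \in W_A$: at height $z_{j(\ell)}$ the mass $1/n$ splits into a vertical piece $d_\ell/n$ continuing up to $y^*_\ell$ and a horizontal piece $(1 - d_\ell)/n$ traveling along the shorter arc at height $z_{j(\ell)}$ to the puncture column $k_{j(\ell)}$ where it exits $\tilde R_1$. The split ratio $d_\ell = 1/(1 + y^*_\ell - z_{j(\ell)})$ is the energy-minimizing choice for this local structure.

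A short direct calculation compares the bent column's energy to the trivial contribution $y^*_\ell/n$. Because $\ell \in W_A$ and $z_{j(\ell)} < m - c/4$, the vertical gap $y^*_\ell - z_{j(\ell)} \geq c/8$ is macroscopic, which yields an energy saving of order $\Omega_c(1/n)$ per bent column. Summed over $|W_A| \geq dn/2$ columns this gives a total energy saving of $\Omega_{c,d}(1)$, an $n$-independent positive constant dominating $1/n$ for $n$ large, and hence the required bound $\E(\Delta h) \geq a(c,d)$.

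The main obstacle is the combinatorial routing: one must ensure that (i) distinct bent flows occupy disjoint sets of horizontal edges, so their energies add linearly rather than compound quadratically, and (ii) each horizontal arc from column $\ell$ to the puncture column $k_{j(\ell)}$ at height $z_{j(\ell)}$ stays inside $\tilde R_1$ until it reaches the puncture. Both issues are handled by assigning distinct puncture heights to distinct bent columns (the $\Omega(n)$ puncture heights are comparable in count to $|W_A|$) and routing horizontally along the shorter arc; any obstruction encountered along such an arc would itself serve as a puncture and can be used in place of the originally planned one.
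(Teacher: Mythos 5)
Your proposal is correct and follows essentially the same argument as the paper: the same case split on whether $y^{*}_\ell$ is within $c/8$ of $m$ (your $W_A$ is the paper's $W_2$), the same use of the trivial flow in one case, and the same bent flow with split ratio $1/(1+\mathrm{gap})$ routed to the nearest boundary point on a distinct puncture line in the other. The only imprecision is the claim that the $cn/4$ puncture heights are ``comparable in count'' to $|W_A|$ (which can be as large as $n$); the paper avoids this by first passing to a subset of $W$ of size $cn/6$, and in your setup one should likewise bend only $\min(|W_A|,cn/4)=\Omega_{c,d}(n)$ columns, which still yields the required $\Omega_{c,d}(1)$ energy saving.
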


\begin{proof}
Let $m$ be as in the statement of the lemma. 
Because of the obvious symmetry between the two conditions 
we will only discuss the proof of case $i.$ 
Without loss of generality we can assume $c=d$.

\begin{figure}
\centering
\includegraphics[scale=.6]{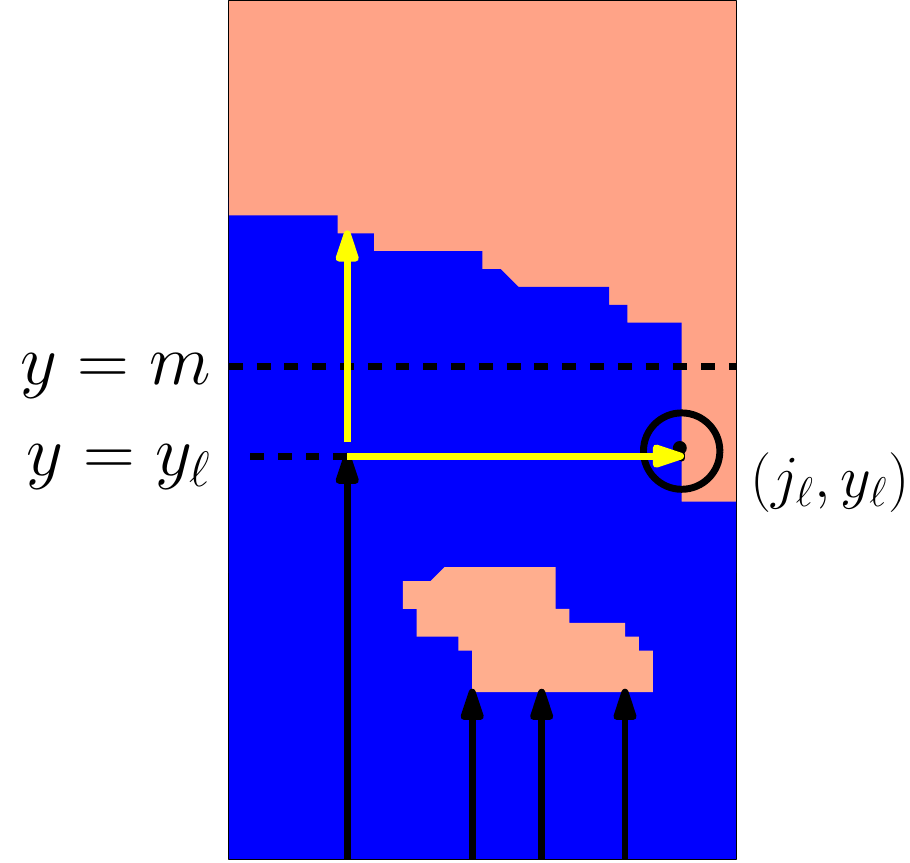}
\caption{Proof of Lemma \ref{maincondition} in case of Property $1$.
}
\label{prop}
\end{figure}

Let $W_1$ be an arbitrary subset of $W$ of size $cn/6$. 
We choose an arbitrary pairing of points in $W_1$ 
with horizontal lines mentioned in Property \ref{fact1}, 
i.e.\ for every $\ell$ in $W_1$ we associate 
a distinct horizontal line $y=y_{\ell}$ 
that intersects $\partial{\tilde{R}_1}$ and $y_{\ell}\le m-c/4$. 
Also let 
\begin{equation}\label{bdrypoint}
(j_{\ell},y_{\ell})
\end{equation}
 be the point on the line $y=y_{\ell}$ closest to the point $({\ell},y_{\ell})$ such that $(j_{\ell},y_{\ell})\in \partial\tilde{R}_1.$ $j_{\ell}$ exists since the line $y=y_{\ell}$ intersects $\partial{\tilde{R}_1}$ (see Figure~\ref{prop}).
Now for all $\ell \in W_1,$
\begin{equation}\label{lowergap}
y^{**}_{\ell}-y_{\ell} > c/4
\end{equation} 
as $W_1\subset W$.

Let 
\begin{equation}\label{settechnical}
W_2=\{\ell \in W_{1}:y^{*}_{\ell} \ge m-\frac{c}{8}\}.
\end{equation}
Hence for all $\ell \in W_1\setminus W_2$ 
\begin{equation}
\label{improvement1}
y^{**}_{\ell}-y^{*}_{\ell}\ge \frac{c}{8}.
\end{equation}

Thus we have 
$$\frac{1}{n}\sum_{k\in C_n}[y^{**}_k-y^*_k] \ge \frac{1}{n}\sum_{k\in W_1\setminus W_2}\frac{c}{8} -\frac{1}{n}.$$
The extra $-\frac{1}{n}$ term follows from \eqref{2obs2}.

Also for all $\ell \in W_2,$
\begin{equation}\label{improve0}
y^{*}_{\ell}-y_{\ell}\ge \frac{c}{8}.
\end{equation}

% \noindent
We now consider the two following sub-cases:\\

\noindent
(i) $|W_2|\le|W_1|/2.$\\

\noindent
 Then since $|W_1|= \frac{cn}{6}$ and hence $|W_1\setminus W_2|\ge \frac{cn}{12}$, by \eqref{improvement1} we have
\begin{equation}\label{easybound1}
\frac{1}{n}\sum_{k\in C_n}[y^{**}_k-y^*_k]\ge \frac{c^2}{96}-\frac{1}{n}.
\end{equation}
\noindent 
Thus in this case by \eqref{arg7},
$$\E(h(\sigma_1)-h(\sigma_0)\mid \sigma_0=\sigma )\ge \frac{c^2}{96}-\frac{1}{n} 
$$
and hence we are done.
\\

\noindent
(ii)\, $|W_2|\ge|W_1|/2$. \\

\noindent
The proof in this case is more involved than the previous case. 
% \noindent
We first claim that in this case 
there exists a flow $f_{1}$ from $C_n \times \{0\}\cap \tilde{R}_1$
to $\Cyl_n \setminus \tilde{R}_1$ such that 
for all $(k,y)\in \tilde{R}_1$
\begin{eqnarray}\label{divcond1}
\div(f_{1})(k,y)=\left\{\begin{array}{cl}
\frac{1}{n} & \mbox{if $y=0$,} \\
0 & \text{ otherwise} 
\end{array}
\right.
\end{eqnarray}
and
\begin{equation}\label{argument6}
\mathcal{E}(f_{1})\le \frac{1}{n}\sum_{k \in C_n}y_k^{*}-\frac{c^{3}}{1536}.
\end{equation} 

Before proving the above claim we show why it suffices and implies \eqref{shortcut}.
\noindent
By taking the set $A$ in Lemma \ref{heightgreen1} to be $\tilde{R}_1$ followed by using Remark \ref{monotoneheight} we get  
\begin{eqnarray}\label{implbound}
\frac{1}{n}\sum_{v\in C_n \times \{0\}}H_{1,\sigma_0}(v) & \le & \frac{1}{n}\sum_{v\in C_n \times \{0\}}\tilde H_{1,\sigma_0}(v)\\
\nonumber 
&\le & \frac{1}{n}\sum_{k \in C_n}y_k^{*}-\frac{c^{3}}{1536}.
\end{eqnarray}
By Lemma \ref{heightinter}
\begin{equation}\label{concl1}
\E(h(\sigma_1)-h(\sigma_0)\mid \sigma_0=\sigma ) =\frac{1}{n}\E[\sum_{w\in C_n \times \{1\}}H_{2,\sigma_{1/2}}(w)]- \frac{1}{n}\sum_{v\in C_n \times \{0\}}H_{1,\sigma_0}(v).
\end{equation}
The first term is bounded from below by $\displaystyle{\frac{1}{n}\sum_{k \in C_n}y_{k}^{**}(\sigma_0)}$ (as in the proof of \eqref{arg7}) and the second term
by \eqref{implbound} is upper bounded by $$\frac{1}{n}\sum_{k \in C_n}y^*_{k}(\sigma_0)-\frac{c^3}{1536}.$$

Hence we get
\begin{align*}
\E(h(\sigma_1)-h(\sigma_0)\mid \sigma_0=\sigma )& \ge \left[\frac{1}{n}\sum_{k \in C_n}y_k^{**}(\sigma_{0})-\frac{1}{n}\sum_{k \in C_n}y^*_{k}(\sigma_0)\right]+ \frac{c^3}{1536}\\
&\ge \frac{c^3}{1536}-\frac{1}{n}. 
\end{align*}
Note that the term in the brackets is greater than $-\frac{1}{n}$ by \eqref{2obs2}. Thus the proof of Lemma \ref{maincondition} is complete in this case except for the proof of the initial claim. 
We now  prove the claim by defining the flow $f_1$ as shown in Figure~\ref{prop}. Recall \eqref{settechnical}. For $k \notin W_2$ let
$$
f_{1}((k,y),(k,y+\frac{1}{n}))= -\frac{1}{n}\ \mbox{for all } (k,y)\in \tilde{R}_1 \mbox{ and } y< y_k^*. 
$$
For $k \in  W_2$ 
\begin{eqnarray*}
f_{1}((k,y),(k,y+\frac{1}{n}))&= &\left\{\begin{array}{ccc}
-\frac{1}{n} & \text{ if } & y < y_k  \\\\
-\frac{d_k}{n} & \text{ if } & y_k \le y < y_k^*
\end{array}
\right.\\
f_{1}((j,y_k),(j+\frac{1}{n},y_k))&= &- \frac{1-d_k}{n}\,\, \forall j=k,k+\frac{1}{n},\dots j_k 
\end{eqnarray*}
where $j_k$ was defined in \eqref{bdrypoint} and $0<d_k<1$ are to be specified later.
$f_1$ is $0$ on all other edges.
%See Fig. \ref{bend}.
It is easy to see that the flow $f_{1}$ satisfies the divergence conditions \eqref{divcond1}.

To see why \eqref{argument6} is true first observe that
$$
\mathcal{E}(f_1)\le  \frac{1}{n}\sum_{k \in W_2 } [y_k+ d_k^2 (y_k^{*}-y_k)+(1-d_k)^2]+\frac{1}{n}\sum_{k \notin W_2 }y^*_{k}.
$$
This is because by construction: 
\begin{itemize}
\item for $k\notin W_2$ the flow along the line $x=k$ is $\frac{1}{n}$ for $ny_k^{*}$ edges.
\item   for $k \in W_2$
\begin{itemize}
\item
the flow along the line $x=k$ is $\frac{1}{n}$ for $ny_k$ edges and $\frac{d_k}{n}$ for $n(y_k^{*}-y_k)$ edges. 
\item  the number of edges with flow $\frac{1-d_k}{n}$ on the line $y=y_k$ is at most $n$.
\end{itemize}
\end{itemize}
Plugging in $d_k=\frac{1}{1+y_k^{*}-y_k}$  we get that the expression on the RHS equals
\begin{eqnarray*}
& &
\frac{1}{n}\sum_{k \in W_2 } \left[y_k+ \frac{ (y_k^{*}-y_k)}{(y_k^{*}-y_k)+1}\right]+\frac{1}{n}\sum_{k \notin W_2 }y^*_{k} \\
&=&\frac{1}{n}\sum_{k \in W_2 } \left[y^*_k + (y_k-y^*_k)+ \frac{ (y_k^{*}-y_k)}{(y_k^{*}-y_k)+1}\right]+\frac{1}{n}\sum_{k \notin W_2 }y^*_{k},\\
&=& \frac{1}{n}\sum_{k \in W_2 } y^*_k + +\frac{1}{n}\sum_{k \notin W_2 }y^*_{k} + \frac{1}{n}\sum_{k \in W_2 } \left[(y_k-y^*_k)+ \frac{ (y_k^{*}-y_k)}{(y_k^{*}-y_k)+1}\right] \\
&=&\frac{1}{n}\sum_{k \in C_n}y^*_{k}-\frac{1}{n}\sum_{k \in W_2 }  \left[\frac{ (y_k^{*}-y_k)^2}{(y_k^{*}-y_k)+1}\right].
\end{eqnarray*}

Now since  $|W_1|\ge \frac{cn}{6}$ and  we are considering the case $|W_2|\ge \frac{|W_1|}{2}$,  we have $|W_2|\ge \frac{cn}{12}$. Also for $k\in W_2$ by \eqref{improve0} $$\frac{c}{8}\le y_k^{*}-y_k\le 1.$$ 
Plugging these in the expression $$\frac{1}{n}\sum_{k \in W_2 }  [\frac{ (y_k^{*}-y_k)^2}{(y_k^{*}-y_k)+1}]$$
gives us \eqref{argument6}. Thus the proof of Lemma \ref{maincondition} for case $i.$ is complete. 
\end{proof}

The proof of Theorem \ref{nonnegativedrift} involves considering a few cases and showing that the hypotheses of Lemma \ref{maincondition} are satisfied in each case. We start with some definitions.

Let $\Cyl_n^+$ be the slightly larger graph $C_n \times P_{n+2}$ where we identify the path $P_{n+2}$ with  
	\[ \{ \frac{a}{n} \,:\, a=-1,0,1,\ldots,n+1 \}. \]
Recall the definition \eqref{e.boundary} of the boundary of a set of vertices. The following is a technical definition needed for convenience
(see Figure~\ref{bend1}):
\begin{definition}\label{outbound}Let $\partial \tilde{R}_1$ be the boundary of 
$\tilde{R}_1\cup \bigl(C_n \times \{\frac{-1}{n}\}\bigr )$ in $\Cyl_n^+$.  
Define $\partial^{out} \tilde{R}_1$ to be the subset of $\partial \tilde{R}_1$
that is visible from $C_{n}\times \{1\}$, i.e., 
the set of all points in $\partial \tilde{R}_1$ 
that are connected to $C_n \times \{1\}$ 
in the subgraph induced by  $\Cyl_n\setminus \tilde{R}_1$. 
As usual we have similar definitions 
for $\partial \tilde{R}_2$, $\partial^{out} \tilde{R}_2$.  
\end{definition}

Note that since $\tilde{R}_1\cap \bigl (C_{n} \times \{1\}\bigr )=\emptyset,$ $$\partial \tilde{R}_1\subset \Cyl_n.$$

Define the graph $\Cyl_n^{*}$ to be the graph $\Cyl_n$ 
along with the additional diagonal edges 
\begin{eqnarray*}
\left\{\left(\frac{i}{n},\frac{j}{n}\right),
\left(\frac{i+1}{n},\frac{j+1}{n}\right)\right\}\\
\left\{\left(\frac{i}{n},\frac{j}{n}\right),
\left(\frac{i-1}{n},\frac{j+1}{n}\right)\right\}
\end{eqnarray*}
for all $i=0,\ldots n-1,\,j=0\ldots n-1$ where the addition in the first coordinate is in $(\frac{1}{n}\Z)/\Z.$
Call a subset $B\subset \Cyl_n,$ $*$-connected if it is a connected set in the graph $\Cyl_n^{*}$.
\begin{remark}\label{connproof}
It follows from \cite[Lemma 2]{at1} that for $i=1,2$, 
$\partial^{out}{\tilde{R}_i}$ is a $*$-connected set. 
\end{remark}
%\textcolor{red}{proof? atleast show that the $y$  projection is connected}\\
Let 
\begin{eqnarray}\label{heightset1}
\mathcal{Y}&=&\{ y: (k,y) \in \partial^{out}{\tilde{R}_1} \text{ for some } k\in C_n\}.
%\label{heightset1}
%A_2&=&\{ y_*(j): j\in \frac{1}{n}\mathbb{Z}/n\mathbb{Z}\}
\end{eqnarray}
 Note that $\mathcal{Y}$ is a connected subset of $[0,\frac{1}{n}\dots,1 ]$ since $\partial^{out}{\tilde{R}_1}$ is $*$-connected.
Also for every $k \in C_n$
let 
\begin{eqnarray}\label{bounddef}
y_{(k)}&=&\inf\{y:(k,y) \in \partial^{out}{\tilde{R}_1} \}\\
\nonumber
y^{(k)}&=&\sup\{y:(k,y) \in \partial^{out}{\tilde{R}_1} \}.
\end{eqnarray}
\begin{figure}
\centering
\includegraphics[scale=.5]{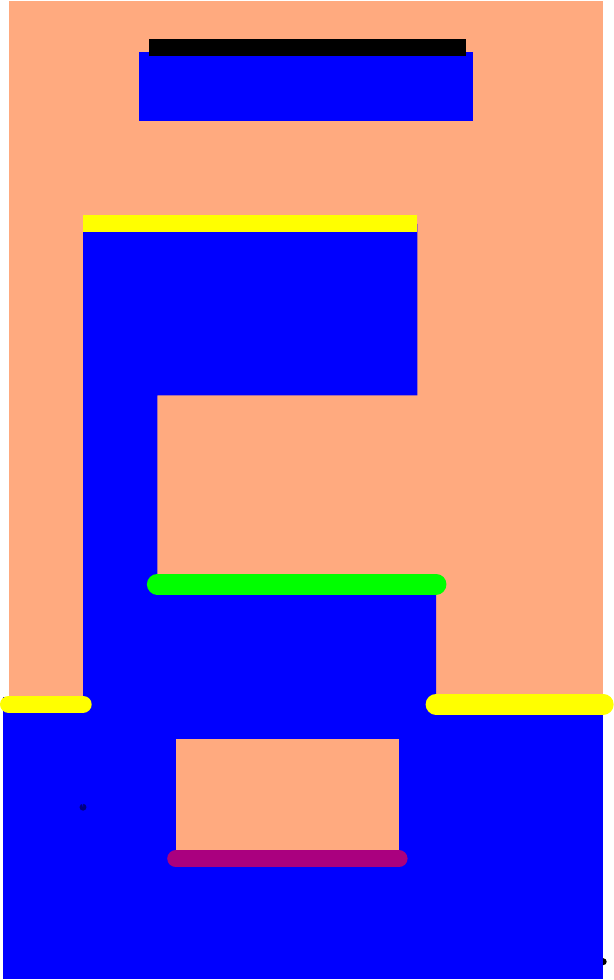}
\caption{ The yellow segment depicts $y^{(k)}$ for $k \in C_n$. The green segment depicts $y_{(k)}$ for those $k$ for which $y^{(k)}\neq y_{(k)}$. Note that $y^{(k)}=y_{(k)}$ unless $\partial^{out}{\tilde{R}_1}$ intersects the column $x=k$ at more than one point.  Also we include the purple segment at the bottom and the black segment at the top depicting $y_k^{*}$ and $y_K^{**}$ respectively to illustrate the differences between the four definitions.
}
\label{f.twobnd}
\end{figure}
See Figure~\ref{f.twobnd}.
Recall the definition of $y_k^*$ and $y_k^{**}$ from \eqref{heightboundary},\eqref{heightboundary1}. 
Note that for every $k \in C_n,$ 
\begin{equation}\label{triv1}
y_k^* \le  y_{(k)}.
\end{equation}
We make another simple observation:
For any $k\in C_n$ there exists $\ell \in \{k-\frac{1}{n},k,k+\frac{1}{n}\}$ such that 
\begin{equation}\label{triv2}
y^{(k)}- \frac{1}{n} \le  y_{\ell}^{**}.
\end{equation}
This follows since the point $(k,y^{(k)})\in \partial^{out} \tilde{R}_1$ 
and hence either  $y^{(k)}=0$, 
or $(k,y^{(k)})$ has a neighbor colored $1$ in $\Cyl_n$.

%\noindent
We are now ready to prove Theorem \ref{nonnegativedrift}. 
As we remarked earlier it suffices to show that 
the hypotheses of Lemma \ref{maincondition} are satisfied.\\

\noindent
{\bf{Proof of Theorem \ref{nonnegativedrift}.}}
By hypothesis $\sigma\in \Omega\setminus \Omega_{\e}.$ Also observe that for
any $\sigma \in \Omega$ one of the following three conditions holds:
\begin{enumerate}
\item[(i)] 
$\sup \mathcal{Y} -\inf \mathcal{Y}>\e/20.$
\item[(ii)]
$\sup \mathcal{Y} -\inf \mathcal{Y}\le\e/20$ and $|\sup \mathcal{Y}-\alpha|>\e/10$
\item[(iii)]
$\sup \mathcal{Y} -\inf \mathcal{Y}\le \e/20$ and $|\sup \mathcal{Y}-\alpha|\le \e/10$
\end{enumerate}
where the set $\mathcal{Y}$ was defined in \eqref{heightset1}.

\noindent
%We start by observing that in the proof of Theorem \ref{straightline} while  choosing the lines $y=y_k$ all we assumed was that there was at least one point $(j_k,y_k)$ that was red. 
Case (i): Let $m = \frac{1}{2} (\sup{\mathcal{Y}}+\inf{\mathcal{Y}}),c=\frac{\e}{20}$.
By hypothesis
$\sup \mathcal{Y} -\inf \mathcal{Y} >c$. 
We first show under this hypothesis 
both Properties \ref{fact1} and \ref{fact2} hold.
% \\

% \noindent
By Remark \ref{connproof}, $\mathcal{Y}$ is connected.
Thus there exist $cn/4$ values in $\mathcal{Y}$ less than $m-c/4$. 
In other words:
\begin{center}
Property \ref{fact1} holds with  $m$ and $c$ as defined above.
\end{center}

\noindent
Again using connectedness of $\mathcal{Y}$,  $cn/4$ values in $\mathcal{Y}$ are greater than $m+c/4.$ Also notice that for every $k< \sup \mathcal{Y}$ the line $y=k$ must intersect $R_1$ since the line $y=\sup \mathcal{Y}$ has a vertex lying on the boundary of $R_1$ and hence we have a monochromatic path of color $1$ from the level $y=0$ to $y=\sup \mathcal{Y}- \frac{1}{n}.$  Thus there exist $cn/4$ horizontal lines above the level $y=m+c/4$ 
that intersect $\Cyl_n\setminus \tilde{R}_2$. Hence
\begin{center}
Property \ref{fact2} holds with  $m$ and $c$ as defined above.
\end{center}
\noindent
To verify the rest of the hypotheses of Lemma \ref{maincondition} 
we have to show that either the set $W$ or $W'$ is large.
Recall $y^{(k)}$ from \eqref{bounddef}. 
Clearly at least one of the inequalities
\begin{equation}
\label{condition1}
|\{ k:y^{(k)} >  m \}|  \ge  n/2 ,
\end{equation}
\begin{equation}
\label{condition2}
|\{k: y^{(k)} \le m \}| \ge  n/2
\end{equation}
\noindent
is true.
Recall from \eqref{goods1}  and \eqref{goods2}
\begin{eqnarray*}
W&=&\{\ell:y^{**}_{\ell}\ge m\}\\
W'&=&\{\ell:y^{*}_{\ell}\le m\}
\end{eqnarray*}
\noindent
When \eqref{condition1} holds,
by \eqref{triv2}  $$|W|\ge \frac{n}{6}.$$
\noindent
Similarly when \eqref{condition2} holds by \eqref{triv1}, $$|W'|\ge \frac{n}{2}.$$
\noindent
Thus all the hypotheses of Lemma \ref{maincondition} are satisfied in case (i) and hence we are done.

To verify the hypotheses of Lemma \ref{maincondition} in the remaining cases 
we start by making an observation. 
Let 
\begin{eqnarray*}
W''&:=&\{\ell:y^{**}_{\ell}\ge \inf \mathcal{Y}-\frac{1}{n}\}\\
W'''&:= &\{\ell \in C_n:  y^{*}_{\ell}  \le  \sup \mathcal{Y}\}.
\end{eqnarray*}
Now we have
\begin{eqnarray}\label{arg10}
|W''|&\ge & \frac{n}{3}, \\
\label{arg11}
|W'''|& = & n
\end{eqnarray}
(the first follows from \eqref{triv2} and the second from \eqref{triv1}).
%\noindent
Thus by choosing $m=\inf \mathcal{Y}$ or $\sup \mathcal{Y}$ the conditions for sets $W$ and $W'$ respectively in the hypotheses of Lemma \ref{maincondition} are always satisfied . 
% \\

% \noindent
The remainder of the proof shows that in cases (ii) and (iii) either Property \ref{fact1} or \ref{fact2} holds by choosing $m=\inf \mathcal{Y}$ or $\sup \mathcal{Y}$. This would then complete the proof by Lemma \ref{maincondition}.

Both cases (ii) and (iii) have two subcases, arguments for which are symmetric. We will discuss only 
one of the cases. In case (ii)  we will discuss the case $$\sup \mathcal{Y}-\alpha \ge \e/10$$ and skip the case 
$\sup \mathcal{Y}-\alpha \le -\e/10$.
%Now recall $m$ defined in \eqref{midpoint1}.
In the case $\sup \mathcal{Y}-\alpha \ge \e/10$ $$\inf \mathcal{Y}-\alpha \ge \frac{\e}{10}-\frac{\e}{20}=\frac{\e}{20}.$$ 
%and the line $y=\alpha$ is at least $\e/10.$  As usual we will deal with t
Now this implies that there are at least $\frac{\e}{40}n^2$ red vertices below the line $y=\inf \mathcal{Y}- \frac{\e}{40}$  since $$\inf \mathcal{Y}- \frac{\e}{40}\ge \alpha + \frac{\e}{40}.$$
In particular there exist at least $\frac{\e}{40}$ disjoint horizontal lines below the line $y=\inf \mathcal{Y}- \frac{\e}{40}$ with at least one red vertex, i.e.\ Property \ref{fact1} is true for $m=\inf \mathcal{Y}$ and $c=\frac{\e}{40}.$
% \\

% \noindent
%or $\e/10n^2$ blue vertices in $\Cyl_n \setminus \tilde{R}_1$. 
In case (iii) we use the fact that $\sigma \in \Omega \setminus \Omega_{\e}$ and hence there are at least $\e n$ horizontal lines with a red vertex below the line $y=\alpha$ or $\e n$ horizontal lines with a blue vertex above the line $y=\alpha$. We will discuss only the first case. 
Now by hypothesis in (iii)$$\inf \mathcal{Y} \ge \alpha -\frac{\e}{10}-\frac{\e}{20} \ge \alpha -\frac{\e}{5}.$$ 
Hence there exist at least $\frac{\e}{20}$ disjoint horizontal lines below the line $y=\inf \mathcal{Y}- \frac{\e}{20}$ with at least one red vertex, i.e.\ Property \ref{fact1} is true for $m=\inf \mathcal{Y}$ and $c=\frac{\e}{40}.$
% \\

% \noindent
The symmetric arguments which we skip will show that Property \ref{fact2} holds in the other cases with $m=\sup \mathcal{Y}$ and $c=\frac{\e}{40}$.
Thus we are done. 
 \qed

\section{Proofs of hitting time results}\label{phtr}
In this section we provide the proofs 
of Lemmas \ref{closeopt1} and \ref{goodbad1}.
We first state the following general lemma 
about hitting times of submartingales. 
The statement involves a few parameters 
and can be slightly difficult to follow.
However it will be useful in subsequent applications. 
Let $\omega(t)$ be a stochastic process 
taking values in an abstract set $\mathcal{D}.$ 
Also let $g:\mathcal{D} \rightarrow \R$ be a real-valued function. 
Let $\mathcal{F}_t$ be the filtration generated 
by the process $\omega(t)$ up to time $t$. 
Also define $X_t:=g(\omega(t)).$
\begin{lemma}\label{lemm:azuma1} 
Let $A_1,A_2,a_1 >0$. Suppose   
\begin{eqnarray}\label{hyp111}
|g(\omega)| & \le & A_1 \mbox{ for all } \omega \in \mathcal{D} \\
\label{hyp222}
|X_{t}-X_{t-1}| & \le & A_2 \mbox{ for all } t.
\end{eqnarray}
Also suppose that $B \subset \mathcal{D}$ is such that for any time $t$ 
\begin{eqnarray}\label{submart1}
\E(X_t-X_{t-1}|\mathcal{F}_{t-1})& \ge & a_1 \mathbf{1}\left(\omega_{t-1}\notin B\right).
\end{eqnarray} 
Then:
\begin{itemize}
\item [i.]  For any $a_2>0,$ 
$$P_{\omega}(\tau(B) \ge T) 
\le \exp\left({-\frac{{(a_2-a_1T)}^2}{4A_2^2T}}\right)$$  
for all $\omega \in \mathcal{D}$ such that $ g(w)\ge A_1-a_2$ 
and any $T$ such that $a_2-a_1T <0$. 
% \\
\item [ii.]  Now consider the special case when $B$ is a level set, 
i.e.\ suppose for some $a_4>2A_2$, $B=\{\omega: g(w)\ge A_1-a_4\}$ 
and $B'=\{\omega: g(w)\le A_1-2a_4\}$ .  
Then for all $\omega \in B$ and all $T> \frac{2A_2}{a_1}$
$$\mathbb{P}_{\omega}(\tau(B') \ge T') \ge  
1-\left[\exp\left({-\frac{a_4^2 }{32A_2^2T}}\right)
+\exp\left({-\frac{a_1^2 T^2}{32A_2^2T}}\right)\right],$$  
where $T' =\exp\left({\frac{\min( a_4^2,a_1^2 T^2 )}{32A_2^2T}}\right)$.
\end{itemize} 
 \end{lemma}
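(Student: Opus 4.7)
For Part (i), I would reduce the statement to a standard Azuma--Hoeffding bound on a supermartingale by compensating away the drift. Let $\tau := \tau(B)$ and define
\[
Z_t := X_{t\wedge\tau} - X_0 - a_1 (t\wedge\tau).
\]
Hypothesis \eqref{submart1} makes $Z_t$ a submartingale with $Z_0 = 0$; since $a_1 \le A_2$ (any one-step drift is bounded by the one-step increment), the differences satisfy $|Z_t - Z_{t-1}| \le 2A_2$. On $\{\tau > T\}$ we have $X_T \le A_1$ and $X_0 \ge A_1 - a_2$, so $Z_T \le a_2 - a_1 T < 0$, and thus $\{\tau > T\} \subset \{-Z_T \ge a_1 T - a_2\}$. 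Azuma--Hoeffding applied to the supermartingale $-Z_t$ then gives the claimed sub-Gaussian bound (the specific constant $4A_2^2$ appears by choosing the appropriate form of Azuma).

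For Part (ii), the plan is to partition $[0,T']$ into $\lceil T'/T\rceil$ consecutive blocks of length $T$ and union-bound over blocks. For a block starting from a state $\omega \notin B'$, the event of reaching $B'$ inside the block is absorbed into two kinds of failure. (A) A large downward fluctuation: the process drops by at least $a_4$ within $T$ bounded-increment steps. Azuma--Hoeffding, applied to the martingale part of $X$ (ignoring the favorable drift), bounds this by $\exp(-a_4^2/(C A_2^2 T))$ and is the source of the first summand $p_1$. (B) The drift fails to return the process to $B$ within $T$ steps: since any $\omega \notin B'$ satisfies $g(\omega) \ge A_1 - 2a_4$, Part (i) applied with $a_2 = 2a_4$ gives $P_\omega(\tau(B) \ge T) \le \exp(-(a_1T - 2a_4)^2/(C A_2^2 T))$, which for $T > 2A_2/a_1$ and $a_1 T \gg a_4$ is dominated by $\exp(-a_1^2 T/(C' A_2^2))$ and is the source of $p_2$.

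The main obstacle is the constant accounting. Since $T' = \min(1/p_1, 1/p_2)$, multiplying a per-block failure bound by $T'/T$ to get the union bound can easily swamp the desired estimate unless each block's failure probability is of order a high power of $p_1 + p_2$. The unusually large denominator $32A_2^2 T$ (rather than the $2A_2^2 T$ one gets from vanilla Azuma) is precisely the slack needed: each per-block Azuma bound of the form $\exp(-a_4^2/(cA_2^2 T))$ with small $c$ is comfortably smaller than $(p_1 + p_2)/T'\cdot T$, so the union bound over the $T'/T$ blocks still delivers $p_1 + p_2$. Carefully handling both block-start cases ($\omega \in B$ versus $\omega \in B^c\setminus B'$) and verifying that the constant 32 is large enough to accommodate the slack in (A) and (B) simultaneously is where most of the bookkeeping will go, but no new probabilistic input beyond Azuma--Hoeffding and Part (i) should be needed.
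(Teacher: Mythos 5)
Your Part (i) is correct and is essentially the paper's argument: the compensated stopped process $Z_t=X_{t\wedge\tau(B)}-a_1(t\wedge\tau(B))$ is a submartingale with increments bounded by $2A_2$, the event $\{\tau(B)>T\}$ forces $Z_T-Z_0\le a_2-a_1T<0$, and Azuma--Hoeffding finishes.

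Part (ii), however, has a genuine gap in the decomposition. Cutting $[0,T']$ into fixed blocks of length $T$ requires a per-block bound on the probability of hitting $B'$ that is uniform over all possible block-start states in $B^{c}\setminus B'$, and no such exponentially small bound exists: a block may begin at a state $\omega$ with $g(\omega)=A_1-2a_4+\epsilon$ for tiny $\epsilon$, from which the process enters $B'$ in a single step with probability bounded below by a constant (the positive drift $a_1$ does not prevent a one-step decrease). Your failure mode (A), a drop of $a_4$ within the block, simply does not cover this event, and your failure mode (B) only controls $\tau(B)$, not whether $B'$ is hit \emph{before} returning to $B$. A second, independent problem: applying Part (i) with $a_2=2a_4$ needs $a_1T>2a_4$, which does not follow from the stated hypotheses $T>2A_2/a_1$ and $a_4>2A_2$ (and fails to be automatic in the paper's application, where $a_4=\e n^2$ and $T=n^2$). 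The paper's proof avoids both issues by decomposing into \emph{excursions from $B$} rather than fixed time blocks: each excursion begins at the first exit from $B$, hence at height $g\ge A_1-a_4-A_2$, so reaching $B'$ during that excursion forces a drop of essentially $a_4$; and instead of bounding $\tau(B)$ it bounds $\tau(B)\wedge\tau(B')$ via the doubly stopped submartingale $W_t=X_{t\wedge\tau(B)\wedge\tau(B')}-a_1[t\wedge\tau(B)\wedge\tau(B')]$, which sidesteps the need for $a_1T>2a_4$. The number of excursions before the first failure then dominates a geometric variable, and the factor $32$ (versus $16$) in the exponent is exactly what absorbs the multiplication by $T'$ -- your intuition about the role of the constant is right, but it has to be paired with the excursion decomposition, not fixed blocks, for the per-trial bound to hold uniformly.
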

The proof of the above lemma follows easily from 
the standard Azuma-Hoeffding inequality for submartingales. 
We defer it to Appendix \ref{newapp}. 
However we immediately see some applications.
Recall the definition of $\Omega_{\e}$ from Table \ref{chart}.

\begin{lemma}\label{linehit} Given any $\e>0$ there exist positive constants
$c=c(\e),d=d(\e),N=N(\e)$ such that for all $n>N$ and $\sigma\in \Omega$ 
\begin{eqnarray}\label{ht1}
\mathbb{P}_{\sigma}(\tau(\Omega_{\e}) >dn^2) & \le &e^{-cn^2}.
\end{eqnarray}
\end{lemma}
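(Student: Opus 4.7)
The plan is to invoke the Azuma-type hitting-time bound of Lemma \ref{lemm:azuma1}(i), using the height function $h$ as the submartingale and $\Omega_\e$ as the target set $B$. All the required inputs are already in place: boundedness $|h(\sigma)| \le A_1 := \alpha(1-\tfrac{\alpha}{2})n^2 + Cn$ is \eqref{maxbnd}, the single-step bound $|h(\sigma_t) - h(\sigma_{t-1})| \le A_2 := 1$ is \eqref{gradbound}, and the crucial positive drift outside the target set is exactly Theorem \ref{nonnegativedrift}, which supplies $a_1 := a(\e) > 0$ with
\[
\E\bigl(h(\sigma_t) - h(\sigma_{t-1}) \,\big|\, \mathcal{F}_{t-1}\bigr) \ge a_1 \quad \text{whenever } \sigma_{t-1} \in \Omega_\e^{\mathsf c}.
\]

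One mild subtlety is that Lemma \ref{lemm:azuma1}(i) requires the drift inequality $\E(X_t - X_{t-1} \mid \mathcal{F}_{t-1}) \ge a_1 \mathbf{1}(\omega_{t-1} \notin B)$ to hold for every $t$, whereas on $\Omega_\e$ itself Lemma \ref{short1} only guarantees a drift of at least $-1/n$. To avoid this issue, I would apply the lemma to the stopped process $\omega(t) := \sigma_{t \wedge \tau(\Omega_\e)}$: once $\Omega_\e$ has been entered, the process is frozen and the conditional drift is identically $0$, so the submartingale condition holds trivially on $B$ while Theorem \ref{nonnegativedrift} supplies it on $B^{\mathsf c}$. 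Since $\tau(\Omega_\e)$ for the stopped chain coincides with $\tau(\Omega_\e)$ for the original chain, nothing is lost.

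Finally, since $h \ge 0$ on all of $\Omega$, every starting configuration $\sigma$ satisfies $h(\sigma) \ge A_1 - a_2$ with $a_2 := A_1 = O(n^2)$. Choose $T := d n^2$ with $d = d(\e)$ large enough that $a_1 T \ge 2 a_2$ (possible because $a_2 = O(n^2)$ while $a_1$ is a positive constant depending only on $\e$). Part (i) of Lemma \ref{lemm:azuma1} then yields, for all $n$ large enough that Theorem \ref{nonnegativedrift} applies,
\[
\mathbb{P}_\sigma\bigl(\tau(\Omega_\e) > d n^2\bigr) \;\le\; \exp\!\left(-\frac{(a_2 - a_1 T)^2}{4 A_2^2\, T}\right) \;\le\; \exp\!\left(-\frac{a_2^2}{16\, T}\right) \;\le\; e^{-c n^2}
\]
for an appropriate $c = c(\e) > 0$, which is precisely \eqref{ht1}.

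Essentially all the work is packed into Theorem \ref{nonnegativedrift}; once the Lyapunov drift is available, the argument reduces to a routine plug-in of parameters into Lemma \ref{lemm:azuma1}(i). The only point worth flagging is the use of the stopped chain to reconcile the lemma's globally stated drift hypothesis with our weaker drift bound on $\Omega_\e$ itself.
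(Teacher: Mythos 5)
Your proposal is correct and follows essentially the same route as the paper: both apply Lemma \ref{lemm:azuma1}(i) to the height function with $B=\Omega_{\e}$, the drift constant $a_1=a(\e)$ from Theorem \ref{nonnegativedrift}, $a_2$ of order $n^2$, and $T$ of order $n^2$ chosen so that $a_1T-a_2\gtrsim n^2$. Your extra care about the drift hypothesis on $B$ itself (via the stopped chain) is sound but already implicit in the paper's proof of Lemma \ref{lemm:azuma1}(i), which works with the process stopped at $\tau(B)$.
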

\begin{proof}
The proof follows from Lemma \ref{lemm:azuma1} $i.$
The stochastic process we consider is the competitive erosion chain $\sigma_{t}$
and $X_{t}=h(\sigma_{t})$ is the height function defined in \eqref{wf}. 
We make the following choice of parameters: 
\begin{align*}
%\mathcal{D} &= \Omega\\
B &= \Omega_{\e}\\
A_1 &= n^2\\
A_2 &= 1\\
a_1 &= a(\e)\mbox{ appearing in Theorem }\ref{nonnegativedrift} \\
a_2 &= n^2\\
T &= \frac{2n^2}{a_1}.
\end{align*}
Clearly \eqref{hyp1} and \eqref{hyp2} are satisfied by \eqref{maxbnd} and \eqref{gradbound}.
Thus by Lemma \ref{lemm:azuma1} $i.$

$$\mathbb{P}_{\sigma}\left(\tau(\Omega_{\e})\ge \frac{2n^2}{a_1}\right) \le e^{-\frac{a_1n^2}{8}}.$$

\end{proof}

\subsection{Proof of Lemma \ref{closeopt1}}\label{seccloseopt1}
The proof follows from Lemma \ref{linehit} and the containment (Remark \ref{containment11})
$\Omega_{\e}\subset \Gamma_{\e}. 
$
\qed\\
%Before proving Lemma \ref{goodbad1} we state and prove a general lemma about hitting times of submartingales.\\
%

\subsection{Proof of Lemma \ref{goodbad1}}\label{secgoodbad1}
The proof follows from Lemma \ref{lemm:azuma1} $ii$. Let $$X(t)=h(\sigma_t)$$ where $h(\cdot)$ is the height function defined in \eqref{wf}. 
We make the following choice of parameters:
\begin{align*}
B &= \Gamma_{\e}\\
B' &=  \Gamma_{2\e}\\
A_1 &= \alpha(1-\alpha/2)n^2\\
a_4 &= \e n^2 \\
A_2 &= 1\\
a_1 &= a(\e)\mbox{ appearing in Theorem }\ref{nonnegativedrift} \\
T &= n^2.
\end{align*}
The second containment in Remark \ref{containment11} along with Theorem \ref{nonnegativedrift} satisfy the drift condition \eqref{submart1} with $a_1=a(\e)$. 
Now by the above choice of parameters $T'=\exp\left({\frac{\min( \e^2,a_1^2 )n^2}{32}}\right).$
Thus by Lemma \ref{lemm:azuma1} $ii.$ for all $\sigma \in \Gamma_{\e},$
$$\mathbb{P}_{\sigma}(\tau(\Gamma_{2\e})\ge T')\ge  1-[\exp\left({-\frac{\e^2 n^2 }{32}}\right)+\exp\left({-\frac{a_1^2 n^2}{32}}\right)].$$
%All of the above now allow us to use Lemma \ref{lemm:azuma1} and conclude the proof.
 %\eqref{mdiff}. 
Thus the proof is complete.
\qed

\section{Proof of the main result}\label{2}
%\textcolor{red}{a few lines}
The proof of Theorem \ref{mainresult} has the same structure as the proof of Theorem \ref{thmdust}.
%In this section we prove Theorem \ref{mainresult} using some lemmas that we state but prove later.\\
%\subsection{Hitting time results}\label{htr}
We first state and prove the following two results analogous to Lemmas \ref{closeopt1} and \ref{goodbad1}. 
%First recall for any $\e>0$ the definition of the sets $\mathcal{A}_{\e}$. \eqref{goodset11}, $\Gamma_{\e}$ \eqref{goodset1} and $\Omega_{\e}$ (Definition \ref{defoptimalconf}).
\begin{lemma}\label{linehit2} Given any $\e>0$ there exist
positive constants $c=c(\e),d=d(\e),$ such that 
for all large enough $n$ and $\sigma\in \Omega_{\e}$ 
\begin{eqnarray}\label{ht11}
\mathbb{P}_{\sigma}(\tau(\mathcal{A}_{\sqrt{\e}}) >dn^2) & \le &e^{-cn}.
\end{eqnarray}
\end{lemma}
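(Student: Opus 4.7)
The strategy is to couple the competitive erosion dynamics, started from $\sigma \in \Omega_\epsilon$, with two IDLA processes on $\Cyl_n$---one for the blue walkers emitted from $C_n \times \{0\}$ and one for the red walkers emitted from $C_n \times \{1\}$---and apply the cylinder IDLA shape estimate of Appendix \ref{pf}. Since $\sigma \in \Omega_\epsilon$, the set $B_1(\sigma)$ already fills $C_n \times [0,\alpha]$ except for at most $\epsilon n$ bad rows, and symmetrically for the red set; reaching $\mathcal{A}_{\sqrt\epsilon}$ amounts to pushing every wrong-color site into the $\sqrt\epsilon$-band about $y = \alpha$, and the slack $\sqrt\epsilon \gg \epsilon$ is what drives the argument.

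Set $T = dn^2$ and consider the blue walkers $X_0, \ldots, X_{T-1}$ emitted during the window. The blue footprint $F^{\mathrm{b}}_T := B_1(\sigma) \cup \{X_0, \ldots, X_{T-1}\}$ collects every site ever blue. The first goal is to show $F^{\mathrm{b}}_T \supseteq C_n \times [0, \alpha - \sqrt\epsilon]$ with probability at least $1 - e^{-cn}$. The key tool is a stochastic domination: although the evictions in competitive erosion cause walkers to stop earlier than pure IDLA walkers (since the red target set is strictly larger than the complement of the IDLA cluster), one can still compare the growth of $F^{\mathrm{b}}_T$ to that of an IDLA cluster anchored at $B_1(\sigma)$, either by a regenerative argument over short sub-windows during which few evictions occur, or by coupling to a thinned IDLA that only counts additions at previously unvisited sites. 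The cylinder IDLA shape estimate then yields the desired cover. A symmetric argument shows that the red footprint covers $C_n \times [\alpha + \sqrt\epsilon, 1]$ with the same probability bound.

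The final step is \emph{stability}: producing a single time $t \le T$ at which all sites below $\alpha - \sqrt\epsilon$ are simultaneously blue and all sites above $\alpha + \sqrt\epsilon$ are simultaneously red, i.e., $\sigma_t \in \mathcal{A}_{\sqrt\epsilon}$. Once a deep blue site at height $y < \alpha - \sqrt\epsilon$ has been produced, the only way to evict it is for some red walker to reach depth $y < \alpha - \sqrt\epsilon$ without hitting any intervening blue; the cylinder IDLA estimate applied to the red walkers bounds this per-walker probability by an exponential in $n$, and a union bound over the $T = dn^2$ walkers still yields at most $e^{-c'n}$. The symmetric claim for blue walkers affecting red sites is handled identically. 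Combining the two cover events with the two stability events produces the required $t \le T$ with $\sigma_t \in \mathcal{A}_{\sqrt\epsilon}$.

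The main obstacle is the stochastic domination step, since competitive erosion evictions preclude a clean monotone coupling with IDLA---the same site can be evicted and re-added, wasting walker additions---so the comparison must be carried out carefully, likely via regenerative restarts combined with the exponential fluctuation bounds of Appendix \ref{pf}.
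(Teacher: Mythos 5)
Your overall strategy is the paper's: couple with blue and red IDLA on the cylinder, invoke Theorem~\ref{thm:IDLAcylinder}, exploit the slack between $\e$ and $\sqrt{\e}$, and use the red IDLA upper bound to guarantee that no red walker crosses a buffer line below $\alpha$ during the window $T=\Theta(\sqrt{\e})n^2$ (this is the paper's event $\mathcal{B}_\e$, obtained from one application of the shape theorem plus the domination $B_2(\sigma_t)\subseteq I_2(t)$, rather than a per-walker union bound). However, there is a genuine gap at exactly the step you flag as "the main obstacle'': the lower bound on the blue cluster. Neither of your two suggestions works as stated. A "regenerative argument over short sub-windows with few evictions'' has nothing to regenerate from, since you have no a priori control on where evictions occur without already knowing the conclusion; and a "thinned IDLA counting only previously unvisited sites'' does not address the real loss, which is that a blue walker may stop at a red site far \emph{above} the target region and thus contribute nothing to the cover below $\alpha-\sqrt{\e}$, so the number of effective walkers is not $T$.

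The paper's resolution has two ingredients you are missing. First, one compares $B_1(\sigma_t)$ not with ordinary blue IDLA but with blue IDLA \emph{killed} on hitting the line $y=\alpha-\sqrt{\e}/2$: on the event $\mathcal{B}_\e$ no evictions ever occur below that line, and an induction shows $\tilde I_1(t)\subseteq B_1(\sigma_t)$ for all $t\le T$ (the killed walker also wastes exactly those walkers that would have stopped above the line, so the comparison is now in the right direction). Second, since Theorem~\ref{thm:IDLAcylinder} is stated for \emph{unkilled} IDLA, one cannot apply its lower bound to $\tilde I_1$ directly; the paper modifies the initial condition to $\sigma'$ by recoloring the entire band between $y=\alpha-\sqrt{\e}$ and $y=\alpha-\sqrt{\e}/2$ red. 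For this $\sigma'$ the upper bound of the theorem shows the unkilled cluster $I_1'(t)$ cannot reach the kill line within $T$ steps (it must first absorb the $\tfrac{\sqrt{\e}}{2}n^2$ red sites in the band), so $I_1'(t)=\tilde I_1'(t)\subseteq \tilde I_1(t)\subseteq B_1(\sigma_t)$, and the lower bound of the theorem applied to $I_1'(T)$ gives $C_n\times[0,\alpha-\sqrt{\e}]\subseteq B_1(\sigma_T)$. Without the killed process and the $\sigma'$ trick (or an equivalent device), your argument does not close.
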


\begin{lemma}\label{main1}Given $\e>0$ there exist
positive constants $\dd,b,d>0$  such that 
for all large enough $n$ and  $\sigma\in \mathcal{A}_{\e}\cap \Gamma_{\dd/2}$
\begin{equation*}\mathbb{P}_{\sigma}
( \tau (\mathcal{A}^c_{\sqrt{\e}}) >e^{dn})>1-e^{-bn}.
\end{equation*}
\end{lemma}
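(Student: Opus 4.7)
My plan is to mirror the proof of Lemma~\ref{goodbad1}, applying Lemma~\ref{lemm:azuma1}(ii) to the height function $h(\sigma_t)$, and then upgrading the resulting $h$-level-set invariance to the stronger $\mathcal{A}_{\sqrt\e}$-invariance via a geometric argument that exploits the locality of the dynamics.

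First I fix a small $\dd = \dd(\e) > 0$ and apply Lemma~\ref{lemm:azuma1}(ii) to $X_t = h(\sigma_t)$ with $A_1 = \alpha(1-\alpha/2)n^2$, $A_2 = 1$ (by~\eqref{gradbound}), $B = \Gamma_{\dd/2}$, $B' = \Gamma_\dd^\mathsf{c}$ (so that $a_4 = \dd n^2/2$), and $T = n$. The drift hypothesis on $B^\mathsf{c}$ follows from Theorem~\ref{nonnegativedrift} with $a_1 = a(\dd/4)$, using the inclusion $\Omega_{\dd/4} \subset \Gamma_{\dd/2}$ (itself a consequence of $\Omega_\e \subset \Gamma_{2\e}$) to deduce $\Gamma_{\dd/2}^\mathsf{c} \subset \Omega_{\dd/4}^\mathsf{c}$. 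This gives $T' \ge e^{cn}$ and yields $\mathbb{P}_\sigma(\tau(\Gamma_\dd^\mathsf{c}) \ge e^{cn}) \ge 1 - e^{-bn}$ for every $\sigma \in \Gamma_{\dd/2}$; in particular this holds on the assumed starting states in $\mathcal{A}_\e \cap \Gamma_{\dd/2}$.

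Next I would translate $\Gamma_\dd$-invariance into $\mathcal{A}_{\sqrt\e}$-invariance using the initial condition $\sigma_0 \in \mathcal{A}_\e$. The key local property of the dynamics is that each newly colored vertex is adjacent to an existing same-color vertex (since the walker is absorbed at the first boundary vertex it hits), so the topmost blue and the bottommost red in each column shift by at most $1/n$ per time step. Starting from $\mathcal{A}_\e$, any trajectory that escapes $\mathcal{A}_{\sqrt\e}$ must therefore accumulate a shift of $\sqrt\e - \e$ somewhere on the cylinder; when this shift is spread over a constant fraction of the $n$ columns, the accompanying drop in $h$ is at least $c_0(\sqrt\e - \e)n^2$, and by choosing $\dd < c_0(\sqrt\e - \e)$ this scenario is automatically forbidden by the $\Gamma_\dd$-invariance established in the first step.

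The hard part will be the \emph{localized} escape: a thin spike of wrong-colored vertices reaching out of $\mathcal{A}_{\sqrt\e}$ in a single (or a few) columns costs only $O(1)$ in $h$, so the height function alone cannot rule it out. To handle this, I would use a harmonic-measure / IDLA-style estimate in the spirit of Appendix~\ref{pf} (and the arguments sketched in Section~\ref{2}), showing that the probability any given walker pushes a spike past the $\sqrt\e$-band in a single step is at most $e^{-c'n}$ for some $c' > 0$ depending only on $\e$. A union bound over the $e^{cn}$-step window, after shrinking constants appropriately, then completes the proof.
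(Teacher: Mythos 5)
Your first step (invariance of $\Gamma_{\dd}$ over an $e^{cn}$-window via Lemma~\ref{lemm:azuma1}(ii)) does match an ingredient of the paper's argument, and your observation that a spread-out escape is forbidden by the height function is correct. But the handling of the localized spike --- which you rightly identify as the hard part --- has a genuine gap. You propose to bound by $e^{-c'n}$ the probability that ``any given walker pushes a spike past the $\sqrt\e$-band in a single step'' and then union-bound over $e^{cn}$ steps. No such single-step bound can hold in the form you need it: a spike is never created in one step, it is built incrementally, one vertex per walker, and the walker only needs to reach the current tip of an already-existing spike to extend it by $1/n$. From a configuration in which a red spike already reaches down to height $\alpha-\sqrt\e+1/n$, the probability that the next red walker extends it across the band is the harmonic measure of the tip, which is at worst polynomially small in $n$, nowhere near $e^{-c'n}$. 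Since the union bound must range over all configurations reachable inside the window --- including ones with nearly-complete spikes --- a per-step estimate cannot close the argument.

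The paper's mechanism is different and is essentially a renewal argument. One shows (Lemma~\ref{hitlemma1}, via the drift and Lemma~\ref{lemm:azuma1}(i)) that from $\Omega_{\e}\cap\Gamma_{\dd}$ the chain returns to $\Omega_{\e}$ within $\e n^2/2$ steps except with probability $e^{-Dn^2}$, and separately (Lemma~\ref{prelimbound}, via the coupling of competitive erosion with IDLA and the upper bound of Theorem~\ref{thm:IDLAcylinder}) that from $\Omega_{\e}\cap\mathcal{A}_{\sqrt\e}$ at least $\e n^2$ steps are needed to exit $\mathcal{A}_{\sqrt\e}$ except with probability $e^{-cn}$: the point is that a state in $\Omega_{\e}$ has $(\sqrt\e-\e)n$ clean buffer rows, and an IDLA cluster cannot cross that buffer with only $O(\e n^2)$ particles. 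Combining these, each excursion between successive returns to $\Omega_{\e}$ fails to escape $\mathcal{A}_{\sqrt\e}$ except with probability $e^{-bn}$, and the union bound is taken over the $e^{b'n}$ excursions (with $b'<b$), not over individual steps. Your $\Gamma_{\dd}$-invariance step is still needed, but only to guarantee that the configuration at each return time lies in $\Gamma_{\dd}$ so that the quick-return lemma applies; it does not by itself yield $\mathcal{A}_{\sqrt\e}$-invariance. To repair your proof you would need to replace the per-step union bound by some version of this per-excursion bookkeeping, together with a quantitative reason (the IDLA domination) why a spike cannot regrow across the buffer faster than the dynamics heals it.
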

For the definition of the sets appearing in the above statements 
see Table \ref{chart}.
%The proofs of the above two lemmas are deferred to the next section.
The proofs of the above lemmas are significantly more delicate 
than the proofs of Lemmas \ref{closeopt1} and \ref{goodbad1}. 
We now proceed to the proofs. 
However for that we need some estimates about internal DLA on cylinder graphs.

\subsection{IDLA on the cylinder}\label{secidla}
%\textcolor{red}{Can be the last section}
Recall the definition of internal DLA from Subsection \ref{idla1}. In this section we discuss the case when the underlying graph is the infinite cylinder $C_n \times \mathbb{Z}_{\ge 0}$ and the starting locations of the particles are uniformly distributed on $C_n \times \{0\}$. By way of comparison with the results stated in \cite{jls1}, we will show a cruder bound on the fluctuation but we will show that the failure probability is exponentially small. We will also need to consider slightly more general starting configurations, as described below.
% \\\\

We now formally define a generalized IDLA process.
Consider the graph 
\begin{equation}\label{infinitecylinder}
\mathcal{C}_n=C_n \times \mathbb{Z}_{\ge 0}.
\end{equation}
For any integer $j\ge 0$ we also define the set 
\begin{equation}\label{setnotation}
Y_{j,n}=C_n \times \{j\}.
\end{equation}

\begin{definition} For integers $t\ge 0$ the sequence of random subsets (cluster) $I(t)$ is defined inductively. 
Let $I(0)$ be any arbitrary subset of $\mathcal{C}_n.$ 
Now given $I(t-1)$ start a random walk uniformly on the set $Y_{0,n}.$ $$I(t)\setminus I(t-1)$$ consists of the site at which the random walk exits $I(t-1)$ for the first time. 
\end{definition}
\begin{remark}
For the purposes of this article we will consider the initial cluster to be a union of rows, i.e. 
 $$I(0)=C_n \times \mathcal{I}_n,$$

with $\mathcal{I}_n=\{i_1,i_2,\ldots i_m\}$ 
for non-negative integers $\{i_1,i_2,\ldots i_m\}$ 
with $m$ allowed to depend on $n$.  
\end{remark}
For convenience we introduce the following notation. 
Let $$\phi:\mathbb{Z}_{\ge 0} \rightarrow 
\mathbb{Z}_{\ge 0}\setminus \mathcal{I}_n$$
be the bijection that preserves order.
Abusing notation slightly we write
$$\phi: \mathcal{C}_n \rightarrow \mathcal{C}_n \setminus I(0)$$ 
to denote the map that is identity in the first co-ordinate 
and $\phi$ in the second coordinate.
Also we define $$A(t):= \phi^{-1}(I(t)\setminus I(0)).$$
with $A(0)=\emptyset$ i.e.\ $A(t)$ is the set of new sites 
in the growth cluster under the map $\phi^{-1}$. See Figure~\ref{invmap}. 
We now state the result about the growth cluster in the generalized version 
of IDLA on the cylinder to be proved in Appendix \ref{pf}.
\begin{figure}
\centering
\includegraphics[scale=.6]{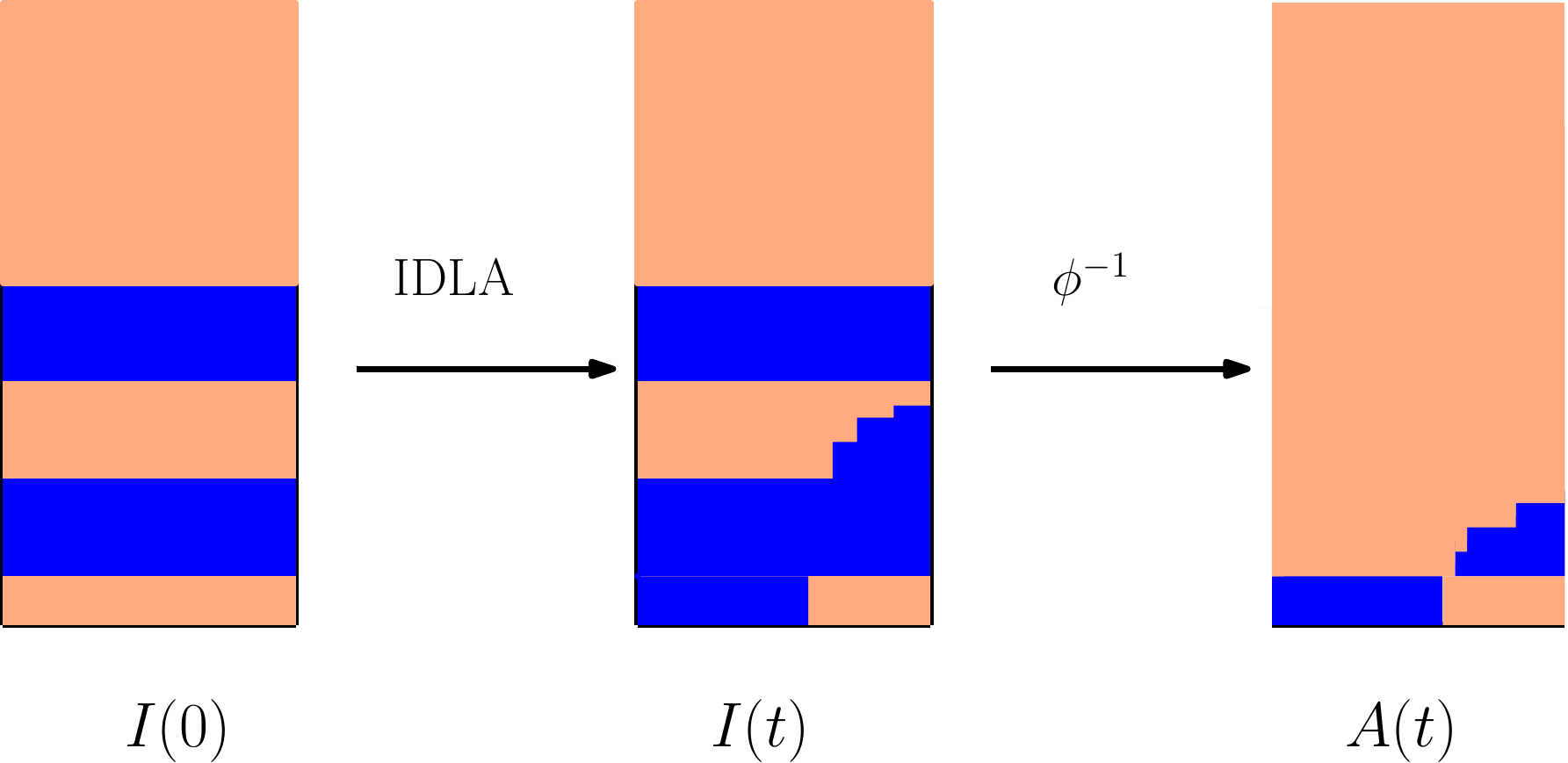}
\caption{The  IDLA cluster $I(t)$, and the cluster $A(t)= \phi^{-1}(I(t)\setminus I(0)).$ }
\label{invmap}
\end{figure}

\begin{theorem}\label{thm:IDLAcylinder} Given positive number $k<1$  for any small enough $\e$ there exists a positive number $c=c(k,\e)$ such that for all large enough $n$ and all $\mathcal{I}_n$ with $m\le n$ 
$$\mathbb{P}\Bigl(C_n \times{[0,(1-\epsilon)kn]}\subset A(kn^2)\subset C_n\times {[0,(1+\e)kn]}\Bigr)\ge 1-e^{-cn}.$$
\end{theorem}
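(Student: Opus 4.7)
My plan is to prove Theorem~\ref{thm:IDLAcylinder} in two stages: a reduction to the classical case $\mathcal{I}_n = \emptyset$, followed by a shape theorem for standard cylinder IDLA obtained by Lawler--Bramson--Griffeath (LBG)-style arguments adapted to the cylinder. For the reduction, since $I(0) = C_n \times \mathcal{I}_n$ is a disjoint union of full horizontal rows, a random walk traversing any row in $\mathcal{I}_n$ preserves the horizontal uniform distribution, by the strong Markov property applied at each entrance and exit. This lets me couple the IDLA walk on $\mathcal{C}_n$ (killed upon first exit of $I(t-1)$) with a walk on the collapsed cylinder carrying cluster $\phi^{-1}(I(t-1)\setminus I(0))$ and killed at first exit of this collapsed cluster, so that the sites added coincide under $\phi$. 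This shows $A(t)$ has the law of standard cylinder IDLA with empty initial cluster, reducing the theorem to the case $\mathcal{I}_n = \emptyset$.

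For the inner bound I would adapt the standard LBG comparison. Fix $z \in C_n \times [0,(1-\e)kn]$ and release all $kn^2$ walks independently, each killed upon first exit of the reference box $B = C_n \times [0, kn]$; let $\widetilde N(z)$ be the number of such auxiliary walks that die at $z$ and $\widetilde M(z)$ the number that visit $z$ but die elsewhere. By Diaconis--Fulton coupling, $\{z \notin I(kn^2)\}$ implies $\widetilde N(z) \leq \widetilde M(z)$; Green's-function estimates on the cylinder (using that horizontal symmetry makes harmonic measure from $Y_{0,n}$ nearly uniform at each height) give $\E[\widetilde N(z)] = \Theta(1)$ and $\E[\widetilde M(z)] = o(1)$. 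Since $\widetilde N(z)$ and $\widetilde M(z)$ are sums of independent Bernoullis, Chernoff gives exponentially small failure for each $z$, and a union bound over the $O(n^2)$ sites completes this half.

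The outer bound $\PP(I(kn^2) \not\subset C_n \times [0,(1+\e)kn]) \leq e^{-cn}$ is more subtle, since a first-moment bound cannot work -- the expected number of walks visiting any given high site is not exponentially small. The strategy is a barrier argument built on the inner bound: applying the inner bound at threshold $(1+\e/2)k$ (and for all intermediate times $t' \leq kn^2$) forces, except on an $e^{-cn}$ event, the cluster below height $(1+\e/2)kn$ to be nearly full throughout the evolution. Any walk that dies above height $(1+\e)kn$ must then traverse $\Theta(n)$ rows of densely filled territory without encountering a vacancy; I would control this event by a hitting-time estimate for random walk on $\mathcal{C}_n$ inside a slab with only a sparse set of vacancies, showing that the walk finds a vacancy with exponentially-high probability before ascending an additional $\Theta(n)$ rows.

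The main obstacle is the outer bound: converting the ``almost-full below'' control into an exponential tail for the cluster's maximum height. The key technical input is the hitting-time and Green's-function estimate described above for walks in a near-full cylinder slab, which must be robust to the randomness of where the few vacancies occur. A subtler point in the reduction step arises when $0 \in \mathcal{I}_n$ and the walker's early evolution interacts with the bottom of $\mathcal{C}_n$, but this is handled by iterating the strong Markov property carefully.
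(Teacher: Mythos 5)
Your inner bound follows essentially the paper's route (the Lawler--Bramson--Griffeath comparison of the counts $M$ and $L$, a Green's function lower bound of order $1/\log n$, a Chernoff bound, and a union bound), though the expectations you quote are off: the relevant counts have expectation of order $n^2/\log n$, not $\Theta(1)$ and $o(1)$, and what matters is that $\E M$ exceeds $\E L$ by a definite factor, which the paper extracts from the mean-value inequality of Lemma~\ref{avmore}. Two other steps of your plan have genuine gaps. First, the reduction to $\mathcal{I}_n=\emptyset$ via the collapsing map $\phi$ is not an identity in law. A walker enters a full initial row at a horizontal position determined by its past (not uniform), and traversing that row shifts the position by a random horizontal displacement; the collapsed walk makes no such moves. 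Since the cluster $A(t-1)$ is in general not rotationally symmetric, this extra horizontal mixing changes the law of the added site, so $A(t)$ is not standard cylinder IDLA. The paper never makes this reduction: it proves its estimates directly for $A(t)$, using only that the walk \emph{started uniformly} on $Y_{0,n}$ is uniform on each full row it reaches (Lemma~\ref{avmore}) --- a statement about the initial distribution, not about individual trajectories mid-process.

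Second, and more seriously, the outer bound does not work as you describe. The region between heights $kn$ and $(1+\e)kn$ is \emph{not} densely filled: on the inner-bound event only about $2\e kn^2$ particles ever pass height $kn$, and the danger is precisely that a sparse tendril of the cluster creeps upward. A walker that reaches height $kn$ unsettled has by definition encountered no vacancy so far, and to settle at height $(1+\e)kn$ it only needs the (sparse) cluster to extend that high; there is no dense slab it must traverse without finding a vacancy, so your barrier heuristic has nothing to bite on. The mechanism the paper uses, following \cite{lbg}, is a recursion over levels: letting $\mu_\ell(j)$ be the expected number of cluster sites $\ell$ levels above height $(1+\sqrt\e)kn$ after $j$ escaped particles, the hitting estimate $\PP_w(X(\tau(k))\in B)\le J|B|/\Delta$ of Lemma~\ref{he} gives $\mu_\ell(m+1)-\mu_\ell(m)\le J\mu_{\ell-1}(m)/(kn\sqrt\e)$, whence $\mu_\ell(j)\le kn\bigl(J_1 j/(\ell\sqrt\e kn)\bigr)^\ell$, which is exponentially small at $\ell=\Theta(\sqrt\e\, n)$ with $j=2\e kn^2$. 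Your proposal contains no substitute for this multitype-branching-process domination, and without it the exponential tail on the cluster's maximal height does not follow.
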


\subsection{Proof of Lemma \ref{linehit2}}\label{seclinehit2}
We start by defining some notations and making a few remarks. 
Recall from \eqref{region} the definition of $B_{i}(\sigma)$ for $i=1,2$. 
Given $\sigma\in \Omega \cup \Omega'$ for $i=1,2,$ 
let $I_{i}(t)$ be the blue and red IDLA clusters respectively 
with initial clusters $I_{i}(0)=B_{i}(\sigma).$ 
Note that for the process $I_{2}(t)$ 
the particles start from $C_n \times \{1\}.$

Also for $i=1,2$ let $\tilde{I}_{i}(t)$ 
be variants of the processes ${I}_{i}(t)$ 
where the blue and red random walks are killed on hitting the lines 
$y=\alpha-\frac{\sqrt{\e}}{2}$ and $y=\alpha+\frac{\sqrt{\e}}{2}$ respectively.
We will refer to these as the \emph{killed IDLA processes}.
One immediately notices that starting from the same initial condition 
the trivial coupling (using the same random walks for both the processes) 
gives the inclusions 
\begin{eqnarray}
\label{idladomination1}
\tilde{I}_{i}(t)&\subseteq & I_{i}(t),\\
\label{idladomination}
B_{i}(\sigma_{t})&\subseteq & I_{i}(t)
\end{eqnarray}
for $i=1,2$ and all times $t.$
Now suppose that $\sigma^{(1)}$ and $\sigma^{(2)}$ are two initial conditions
with $B_{1}(\sigma^{(1)})\subseteq B_{1}(\sigma^{(2)})$. 
Then again under the trivial coupling 
\begin{eqnarray}\label{initdom}
{I}^{(1)}_{1}(t)&\subseteq & {I}^{(2)}_{1}(t),\\
\tilde{I}^{(1)}_{1}(t)&\subseteq & \tilde{I}^{(2)}_{1}(t)
\end{eqnarray}
where for $j=1,2,$ ${I}^{(j)}_{1}(\cdot)$ and $\tilde{I}^{(j)}_{1}(\cdot)$ 
are the blue  IDLA and killed IDLA clusters  
with initial cluster $B_{1}(\sigma^{(j)})$. 
Clearly similar statements hold for the red cluster. We omit them for brevity.

Throughout the rest of the proof we will think of the competitive erosion processes $B_{1}(\sigma_{t}),B_{2}(\sigma_{t})$ and the IDLA processes $I_{1}(\cdot),I_{2}(\cdot)$ starting from $B_{1}(\sigma_0),B_{2}(\sigma_0)$ respectively to be all coupled under the trivial coupling. For neatness  we will say that an event holds \emph{with overwhelming probability} if the probability of occurrence of the event is at least $1-e^{-cn}$ for some $c>0$ not depending on $n$.
%JP
%JP Nov 20: I changed the \mbox to an \emph.
%JP
Also let $$T=\frac{\sqrt{\e}}{10}n^2$$ where $\e$ appears in the statement of Lemma \ref{linehit2}.\\

\noindent 
\textit{Proof of Lemma \ref{linehit2}}.
By symmetry it suffices to show that at time $T$ with overwhelming probability all vertices below the line  $y<\alpha-\sqrt{\e}$ are blue. 

Define the event 
$$
\mathcal{B}_{\e}:=\left\{ B_{2}(\sigma_{t}) \mbox{ does not intersect the line } y=\alpha-\frac{\sqrt{\e}}{2}, \mbox{ for } t\le T \right\}.$$
Now given $\sigma_0=\sigma \in \Omega_{\e},$
\begin{equation}\label{idlevent}
\mathcal{B}_{\e} \mbox{ occurs with overwhelming probability. }
\end{equation}

To see this notice that there are at least $(\frac{\sqrt{\e}}{2}-\e)n$ lines 
between the levels $y=\alpha-\frac{\sqrt{\e}}{2}$ and $y=\alpha$ 
that have no red vertices. 
Thus by the upper bound in Theorem \ref{thm:IDLAcylinder} with overwhelming probability  $I_{2}(t)$ will not intersect $y=\alpha-\frac{\sqrt{\e}}{2}$ for $t\le T$. \eqref{idlevent} now follows from \eqref{idladomination}.
% \\

We now show that on the event $\mathcal{B}_\e$ with overwhelming probability at time $T$ all the initial red vertices below the line $y=\alpha-\sqrt{\e}$ in the competitive erosion process become blue, i.e. 
$$C_n \times [0,\alpha-\sqrt{\e}]\subset B_{1}(\sigma_{T}).$$ 
The strategy to show this is to compare the processes $B_{1}(\sigma_{t})$ and $\tilde{I}_{1}(t)$. 
% $\mathcal{B}_{\e}$ below the line $y=\alpha-\sqrt{\e}$ the blue clusters in both the competitive erosion process and the blue IDLA process are essentially the same. The lower bound in Theorem \ref{thm:IDLAcylinder} would then complete the proof. 
Since on the event $\mathcal{B}_{\e}$ no new red particles hit the line $y=\alpha-\frac{\sqrt{\e}}{2}$ up to time $T$ the blue competitive erosion cluster dominates the blue killed IDLA cluster.
That is, 
\begin{equation}\label{idarg1}
\mathcal{B}_1:=\left \{\tilde{I}_{1}(t)\subset B_{1}(\sigma_{t}) \mbox{ for } t\le T \right \} \mbox{ occurs with overwhelming probability.}
\end{equation}
 Note however that the domination works in the reverse direction for the actual blue IDLA cluster by \eqref{idladomination} and hence the need for the killed process. 
We would be done at this point if the lower bound in Theorem \ref{thm:IDLAcylinder} was stated for $\tilde{I}_{1}(t)$.  However since Theorem \ref{thm:IDLAcylinder} is for $I_1(t)$ we need to argue a little more.
% \\

Now given the initial condition $\sigma$ consider a modification $\sigma'$ obtained by making all the rows between the lines $y=\alpha-\sqrt{\e}$ and line $y=\alpha-\frac{\sqrt{\e}}{2}$ completely red. Then by \eqref{initdom} for all $t$ 
\begin{equation}\label{inclusion}
\tilde{I}'_{1}(t) \subset \tilde{I}_{1}(t)
\end{equation}
where the LHS and the RHS are the killed blue IDLA clusters with initial conditions $\sigma'$ and $\sigma$ respectively.

Also define ${I}_1'(t)$ to be the blue IDLA cluster with starting condition $\sigma'.$
By the upper bound in Theorem \ref{thm:IDLAcylinder} with overwhelming probability ${I}_1'(t)$ does not intersect the line $y=\alpha-\frac{\sqrt{\e}}{2}$ for $t\le T$. 
This is because in $\sigma'$ all the $\frac{\sqrt{\e}}{2}n^2$ vertices between  $y=\alpha-\sqrt{\e}$ and line $y=\alpha-\frac{\sqrt{\e}}{2}$ are colored red. Note that this implies that both the blue IDLA and the killed IDLA clusters  starting from $\sigma'$ are exactly the same for the first $T$ steps. 
That is,
\begin{equation}\label{idarg2}
\mathcal{B}_{2}:=\left\{\tilde{{I}}_1'(t)={I}_1'(t) \mbox{ for }t\le T \right\} \mbox{ occurs with overwhelming probability.}
\end{equation}

Now by the lower bound in Theorem \ref{thm:IDLAcylinder}  
\begin{equation}\label{idarg3}
\mathcal{B}_{3}:=\left \{ C_n \times [0,\alpha-\sqrt{\e}]\subset{I}_1'(T)\right\} \mbox{ occurs with overwhelming probability }
\end{equation}
 since $\sigma'$ has at most $\e n$ lines containing red vertices below the line $\alpha-\sqrt{\e}$.
Lastly notice that \eqref{inclusion} implies 
the following containment of events: 
\begin{align*}
\mathcal{B}_{1}\cap\mathcal{B}_{2}\cap\mathcal{B}_{3}\subseteq \left\{C_n \times [0,\alpha-\sqrt{\e}]\subset B_{1}(\sigma_{T})\right\}.
\end{align*}
Thus we are done since by \eqref{idarg1},\eqref{idarg2} and \eqref{idarg3} all the event on the LHS occur with overwhelming probability.

\qed\\

Having proved Lemma \ref{linehit2} we state and prove a few preliminary lemmas required for the proof of Lemma \ref{main1}.
\begin{lemma}\label{prelimbound}For small enough $\e>0$ for any $\sigma\in \Omega_{\e}\cap\mathcal{A}_{\sqrt{\e}}$
\begin{equation}\label{eqprelim}
\mathbb{P}_{\sigma}(\tau({\Omega\setminus \mathcal{A}_{\sqrt{\e}}})\ge \e n^2)\ge 1-e^{-cn}
\end{equation}
for some $c=c(\e)>0$. For the definitions of  $\mathcal{A}_\e$  and $\Omega_{\e}$ refer to Table \ref{chart}.
\end{lemma}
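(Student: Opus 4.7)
By the color symmetry of the competitive erosion dynamics, it suffices to prove that with probability at least $1-e^{-cn}$, no red vertex of $\sigma_t$ lies in $C_n\times[0,\alpha-\sqrt{\e}]$ for any $t\le T:=\e n^{2}$; the analogous argument takes care of blue vertices above $\alpha+\sqrt{\e}$. I would follow the IDLA stochastic-domination scheme of the proof of Lemma~\ref{linehit2} above, now specialized to the shorter window $[0,T]$, and use the $\Omega_\e$ part of the hypothesis to maintain an absorbing blue barrier throughout that window.

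The key structural input is that $\sigma\in\Omega_\e\cap\mathcal{A}_{\sqrt{\e}}$ forces at least $(\sqrt{\e}-\e)n$ entirely blue rows inside the band $[\alpha-\sqrt{\e},\alpha]$: by $\Omega_\e$ at most $\e n$ rows below $\alpha$ contain a red site, and by $\mathcal{A}_{\sqrt{\e}}$ every row at height $\le\alpha-\sqrt{\e}$ is already fully blue. A simple random walker from $C_n\times\{1\}$ cannot cross any fully blue row without being absorbed on first contact, so at $t=0$ the red walker cannot reach a blue at height $\le\alpha-\sqrt{\e}$. To propagate this blocking to all $t\le T$ I would set up the IDLA upper bound exactly as in the proof of Lemma~\ref{linehit2}: let $\sigma^{+}$ be the configuration whose red set is the union of all full rows of $\Cyl_n$ that meet $B_2(\sigma)$; then $B_2(\sigma)\subseteq B_2(\sigma^{+})\subseteq C_n\times(\alpha-\sqrt{\e},1]$, the trivial coupling (iterating \eqref{idladomination1} and \eqref{initdom}) gives $B_2(\sigma_t)\subseteq I_2^{+}(t)$ where $I_2^{+}$ is red IDLA from $\sigma^{+}$, and Theorem~\ref{thm:IDLAcylinder} applied with $k=\e$ (after the flip $y\mapsto 1-y$) certifies that with probability $\ge 1-e^{-cn}$, for every $t\le T$ the growth $I_2^{+}(t)\setminus B_2(\sigma^{+})$ is confined to the $(1+\e')\e n$ empty rows immediately below $B_2(\sigma^{+})$.

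The main obstacle is that this IDLA envelope is marginally too weak on its own: when $B_2(\sigma^{+})$ reaches down to $\alpha-\sqrt{\e}+1/n$ (as $\Omega_\e$ permits), the theorem allows the IDLA cluster to protrude by $(1+\e')\e$ and hence to encroach slightly into $C_n\times[0,\alpha-\sqrt{\e}]$. To close this gap I would combine the IDLA bound with a conservation-of-blue-mass argument on the band: since each competitive erosion step kills at most one blue, at every $t\le T$ the total blue mass in $C_n\times[\alpha-\sqrt{\e},\alpha]$ is at least $(\sqrt{\e}-\e)n^{2}-\e n^{2}=(\sqrt{\e}-2\e)n^{2}$, so for any distribution of the at most $\e n^{2}$ kills, either some row of the band is still entirely blue (in which case the red walker is blocked with probability one), or the kills are spread thinly over the $(\sqrt{\e}-\e)n$ originally all-blue rows and each such row retains blue density at least $1-O(\sqrt{\e})$. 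In the latter case the horizontal mixing of simple random walk on $C_n$ together with an AM--GM-type product bound on the per-row crossing probabilities yields a per-step probability of $e^{-c''(\e)n}$ that the red walker descends across all $\sqrt{\e}n$ rows of the band without being absorbed. A union bound over the $T=\e n^{2}$ steps of the chain then gives $\mathbb{P}(\tau(\Omega\setminus\mathcal{A}_{\sqrt{\e}})<\e n^{2}\mid\sigma_{0}=\sigma)\le T\cdot e^{-c''(\e)n}+e^{-cn}\le e^{-cn}$ for small enough $\e$ and large enough $n$, completing the proof.
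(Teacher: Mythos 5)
Your first two paragraphs are essentially the paper's proof: use the $\Omega_\e$ hypothesis to count at least $(\sqrt\e-\e)n$ red-free rows in the band $(\alpha-\sqrt\e,\alpha]$, pass to a union-of-rows initial configuration $\sigma^+$ dominating $B_2(\sigma)$, couple $B_2(\sigma_t)\subseteq I_2^+(t)$ via \eqref{idladomination} and \eqref{initdom}, and invoke the upper bound in Theorem~\ref{thm:IDLAcylinder} with $k=\e$. That is exactly the argument the paper gives (terse in this lemma, spelled out more carefully in the proof of Lemma~\ref{linehit2}).

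The third paragraph, however, addresses a gap that does not exist, and it stems from a misreading of the conclusion of Theorem~\ref{thm:IDLAcylinder}. The containment $A(\e n^2)\subset C_n\times[0,(1+\e')\e n]$ does \emph{not} say that the growth protrudes by $(1+\e')\e n$ rows below the lowest row of the initial cluster; it says the growth is confined to the first $(1+\e')\e n+1$ rows \emph{not in} $B_2(\sigma^+)$, counted from $y=1$ downward via the order-preserving map $\phi$. These first empty rows are precisely the red-free rows that the $\Omega_\e$ hypothesis gives you inside $(\alpha-\sqrt\e,\alpha]$ (and a few possible ones in $(\alpha,\alpha+\sqrt\e]$). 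There are at least $(\sqrt\e-\e)n$ of them strictly above $y=\alpha-\sqrt\e$, and for $\e$ small $(1+\e')\e n+1<(\sqrt\e-\e)n$, so the whole envelope $\phi\bigl([0,(1+\e')\e n]\bigr)$ stays strictly above $\alpha-\sqrt\e$. An isolated red row sitting at $\alpha-\sqrt\e+1/n$ does not pull the envelope down past $\alpha-\sqrt\e$ because such a row is \emph{removed} (compressed out) in the $\phi$-coordinates; only the empty rows below it are counted, and they are outnumbered by the empty rows above $\alpha-\sqrt\e$. Thus the IDLA domination alone already finishes the proof, just as the paper claims.

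As a consequence, the ``conservation of blue mass / horizontal mixing / AM--GM / union bound over $T$'' machinery is superfluous. It is also shaky on its own terms: the dichotomy ``some row is still fully blue, or kills are spread thinly so each row has blue density $1-O(\sqrt\e)$'' is not exhaustive, and the ``per-row crossing probability'' $e^{-c''n}$ is asserted without a clean argument (a random walk is not constrained to make a single downward sweep, so the per-row probabilities do not multiply in the naive way). You should delete this paragraph and instead verify, as sketched above, that $(1+\e')\e<\sqrt\e-\e$ for $\e$ small, which is the only quantitative check needed.
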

\begin{proof}

As $\sigma\in \Omega_{\e}$ there are at least $(\sqrt{\e}-\e)n$ rows with no red vertex between the lines $y=\alpha-\sqrt{\e}$ and $\alpha$ and similarly at least $(\sqrt{\e}-\e)n$ rows with no blue vertex between the lines $y=\alpha$ and $\alpha+\sqrt{\e}.$
The proof now follows from the upper bound in Theorem \ref{thm:IDLAcylinder} 
which implies that the IDLA processes $I_{1}(t),I_{2}(t)$ require at least $\e n^2$ rounds with probability at least $1-e^{-cn}$ for some positive $c=c(\e)$ before a blue vertex is seen above the line $y=\alpha+\sqrt{\e}$ and a red vertex is seen below the line $y=\alpha-\sqrt{\e}$ or a . Thus we are done by \eqref{idladomination} which says that the same holds for the competitive erosion clusters $B_{i}(t)$ for $i=1,2.$
\end{proof}

For any set $A\subset \Omega$ define the positive return time $\tau^{+}(A)$ to be $$\inf\{t\ge 1 : \sigma_t \in A\}.$$

\begin{lemma}\label{hitlemma1}Given positive numbers $\e,c$ there exist positive constants $\dd,D$ such that for any $\sigma\in \Omega_{\e}\cap \Gamma_{\dd}$
\begin{equation}\mathbb{P}_{\sigma}(\tau^+(\Omega_{\e})\ge c n^2)<e^{-Dn^2}.
\end{equation}
%where $\Omega_{\e}$ and $\Gamma_{\dd}$ are defined in \eqref{optimalconf1} and \eqref{goodset1}.
%where $\tau^+(\Omega_{\e}$ is the positive return time to $\Omega_{\e}.$
\end{lemma}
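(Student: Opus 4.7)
The plan is a direct application of Lemma \ref{lemm:azuma1}$i.$ to the height process $X_t := h(\sigma_t)$, with the drift supplied by Theorem \ref{nonnegativedrift}. I would instantiate Lemma \ref{lemm:azuma1} with $\mathcal{D} := \Omega$, $B := \Omega_{\e}$, $g := h$, $A_1 := h(\sigma_{max}) = \alpha(1-\alpha/2)n^2 + O(n)$ via \eqref{maxbnd}, $A_2 := 1$ via \eqref{gradbound}, and $a_1 := a(\e)$ from Theorem \ref{nonnegativedrift}; in this setup hypothesis \eqref{submart1} is exactly the content of the drift theorem, since the drift bound is vacuous on $\Omega_\e$. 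The remaining parameter is $\dd$, and I would pick $\dd := a(\e)c/2$, expecting the conclusion to hold for any $D$ strictly less than $(a(\e)c - \dd)^2/(4c) = a(\e)^2 c / 16$.

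Because $\sigma \in \Omega_\e$, the plain hitting time $\tau(\Omega_\e)$ starting from $\sigma$ is $0$, so $\sigma$ cannot be plugged directly into Lemma \ref{lemm:azuma1}. To handle the ``positive'' in $\tau^+$, I would condition on $\sigma_1$ via the Markov property:
$$
\PP_\sigma(\tau^+(\Omega_{\e}) \geq cn^2) = \E_\sigma \bigl[\PP_{\sigma_1}(\tau(\Omega_{\e}) \geq cn^2 - 1) \bigr].
$$
By \eqref{gradbound}, $h(\sigma_1) \geq h(\sigma) - 1 \geq (\alpha(1-\alpha/2) - \dd) n^2 - 1$ almost surely, i.e.\ in the notation of Lemma \ref{lemm:azuma1} one has $g(\sigma_1) \geq A_1 - a_2$ with $a_2 := \dd n^2 + O(n)$, uniformly in $\sigma_1$. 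At $T := cn^2 - 1$ the sign condition $a_2 - a_1 T < 0$ reduces to $(\dd - a(\e) c)n^2 + O(n) < 0$, which holds for large $n$ precisely because of the choice $\dd < a(\e)c$. Lemma \ref{lemm:azuma1}$i.$ then yields
$$
\PP_{\sigma_1}(\tau(\Omega_{\e}) \geq cn^2 - 1) \leq \exp\!\left(-\frac{(a(\e)c - \dd)^2 n^4 + O(n^3)}{4(cn^2 - 1)}\right) \leq e^{-Dn^2}
$$
for $n$ large, and taking expectation preserves the bound.

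There is no genuine obstacle here: Theorem \ref{nonnegativedrift} has already done all the heavy lifting, and the lemma is essentially an immediate corollary of it and the general Azuma-type estimate. The only conceptual point is the forced relation $\dd < a(\e) c$: a drift of order $a(\e)$ sustained over $cn^2$ steps accumulates $a(\e)cn^2$ in height, which must exceed the height deficit $\dd n^2$ that a starting point in $\Gamma_\dd$ is allowed relative to the maximum. This is the only place the hypothesis $\sigma \in \Gamma_\dd$ enters the argument.
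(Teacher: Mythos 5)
Your proposal is correct and takes essentially the same route as the paper's proof: the paper likewise passes to $\sigma'=\sigma_1$, uses \eqref{gradbound} and $\sigma\in\Gamma_\dd$ to get $h(\sigma_1)\ge \alpha(1-\alpha/2)n^2-\dd n^2$, and applies Lemma \ref{lemm:azuma1}$i.$ with $B=\Omega_\e$, $A_2=1$, $a_1=a(\e)$, $a_2=\dd n^2$, $T=cn^2$ and $\dd=a(\e)c/2$, arriving at the same bound $\exp(-a_1^2cn^2/16)$. The only cosmetic difference is that the paper first disposes of the trivial case $\sigma_1\in\Omega_\e$ rather than writing the conditioning as an expectation.
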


\noindent
For the definition of the sets see Table \ref{chart}.
\begin{proof} 

The proof follows from Lemma \ref{lemm:azuma1} $i.$ 
First let $\sigma'=\sigma_1$ for 
the competitive erosion chain starting from $\sigma_0=\sigma$. 
If $\sigma' \in \Omega_{\e}$ we are done. 
Otherwise by \eqref{gradbound} and the fact that by hypothesis $\sigma \in \Gamma_{\dd}$,
$$x:=h(\sigma')\ge \alpha(1-\alpha/2)n^2-\dd n^2 .$$

Now let $$X(t)=h(\sigma_{t+1})$$ 
where $h(\cdot)$ is the height function defined in \eqref{wf}. 
Thus $X(0)=x.$
We make the following choice of parameters:
\begin{align*}
B &= \Omega_{\e}\\
A_1 &= \alpha(1-\alpha/2)n^2 \\
A_2 &= 1\\
a_1 &= a(\e)\mbox{ appearing in Theorem }\ref{nonnegativedrift}\\
a_2 & = \dd n^2 \mbox{ with }\dd < ac/2  \\
T &= cn^2.
\end{align*}The choice of $a_2$ works since $\sigma\in \Gamma_{\dd}$ by hypothesis.
Also the choice of $\dd$ ensures that $a_2-a_1T <0$ and hence the hypothesis of Lemma \ref{lemm:azuma1} $i.$ is satisfied. 
Thus for $\dd=\frac{ac}{2}$ by Lemma \ref{lemm:azuma1} $i.$ for all $\sigma \in \Omega_{\e}\cap \Gamma_{\dd},$ 
$$\mathbb{P}_{\sigma'}(\tau(\Omega_{\e})\ge T)\le \exp\left({-\frac{a^2_1c n^2}{16}}\right).$$ 
Since $\tau^{+}(\Omega_{\e})$ starting from $\sigma$ is one more than $\tau(\Omega_{\e})$ starting from $\sigma'$ we are done.
% since
%$$\tau^{+}(\Omega_{\e})=\tau'+1.$$    
\end{proof}

\begin{lemma}\label{finalargument1}Given $\e>0$ there exist positive constants $\dd,b$ such that for large enough $n$ and any $\sigma\in \Omega_{\e}\cap \Gamma_{\dd}\cap \mathcal{A}_{\sqrt{\e}},$
\begin{equation*}\mathbb{P}_{\sigma}(\tau^+(\Omega_{\e})\le \tau(\Omega \setminus \mathcal{A}_{\sqrt{\e}}) )>1-e^{-bn}.
\end{equation*}
\end{lemma}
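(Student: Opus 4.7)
The plan is to combine Lemma \ref{prelimbound} and Lemma \ref{hitlemma1} via a union bound. The starting configuration $\sigma \in \Omega_{\e} \cap \Gamma_{\dd} \cap \mathcal{A}_{\sqrt{\e}}$ simultaneously satisfies the hypotheses of both lemmas, so we can read off quantitative bounds: the time to escape $\mathcal{A}_{\sqrt{\e}}$ is at least $\e n^2$ with probability $\ge 1 - e^{-c_1 n}$, while the positive return time to $\Omega_{\e}$ is small with stretched-exponential probability. By making the return-time bound \emph{shorter} than the escape-time bound, the return happens first.

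More concretely, I would first fix $c := \e/2$ and apply Lemma \ref{hitlemma1} with parameters $\e$ and this $c$, obtaining constants $\dd = \dd(\e) > 0$ and $D = D(\e) > 0$ such that for every $\sigma \in \Omega_{\e} \cap \Gamma_{\dd}$,
\begin{equation*}
\mathbb{P}_{\sigma}\!\left(\tau^+(\Omega_{\e}) \ge \tfrac{\e}{2} n^2\right) \le e^{-D n^2}.
\end{equation*}
This $\dd$ will be the constant that appears in the statement of Lemma \ref{finalargument1}. Next, I would apply Lemma \ref{prelimbound} with the same $\e$: since $\sigma \in \Omega_{\e} \cap \mathcal{A}_{\sqrt{\e}}$, there is a constant $c_1 = c_1(\e) > 0$ such that
\begin{equation*}
\mathbb{P}_{\sigma}\!\left(\tau(\Omega \setminus \mathcal{A}_{\sqrt{\e}}) < \e n^2\right) \le e^{-c_1 n}.
\end{equation*}

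Taking a union bound, the complementary event
\begin{equation*}
\left\{ \tau^+(\Omega_{\e}) < \tfrac{\e}{2} n^2 \right\} \cap \left\{ \tau(\Omega \setminus \mathcal{A}_{\sqrt{\e}}) \ge \e n^2 \right\}
\end{equation*}
has probability at least $1 - e^{-D n^2} - e^{-c_1 n}$. On this event,
\begin{equation*}
\tau^+(\Omega_{\e}) < \tfrac{\e}{2} n^2 < \e n^2 \le \tau(\Omega \setminus \mathcal{A}_{\sqrt{\e}}),
\end{equation*}
which is precisely the event in the lemma. Choosing $b < c_1$ and $n$ large enough absorbs the $e^{-Dn^2}$ error into a single $e^{-bn}$, completing the proof.

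There is no real obstacle here beyond matching time scales and constants; the content of the lemma is essentially that the two previously established bounds live on compatible time windows ($\tfrac{\e}{2} n^2$ versus $\e n^2$), which forces an interleaving in the order of the two hitting times. The substantive work has already been done in Lemmas \ref{prelimbound} and \ref{hitlemma1}, which in turn rest on the IDLA estimate (Theorem \ref{thm:IDLAcylinder}) and the Lyapunov drift (Theorem \ref{nonnegativedrift}).
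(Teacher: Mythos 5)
Your proposal is correct and follows essentially the same route as the paper's own proof: apply Lemma \ref{hitlemma1} with $c=\e/2$ to fix $\dd$ and bound the return time to $\Omega_{\e}$, apply Lemma \ref{prelimbound} to bound the escape time from $\mathcal{A}_{\sqrt{\e}}$, and conclude by a union bound comparing the two time scales. Nothing is missing.
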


\begin{proof} Since $\sigma \in \Omega_{\e}\cap \mathcal{A}_{\sqrt{\e}}$ by Lemma \ref{prelimbound}
\begin{equation*}
\mathbb{P}_{\sigma}(\tau(\Omega\setminus \mathcal{A}_{\sqrt{\e}})\ge \e n^2)\ge 1-e^{-hn}
\end{equation*}
for some $h=h(\e)>0.$
Now using $c=\e/2$ in Lemma \ref{hitlemma1}
we get that we can choose $\dd$ such that for any $\sigma\in \Omega_{\e}\cap \Gamma_{\dd}\cap \mathcal{A}_{\sqrt{\e}}$
\begin{equation}\label{quadreturn}
\mathbb{P}_{\sigma}(\tau^+(\Omega_{\e})\ge \e n^2/2)<e^{-Dn^2}.
\end{equation}
for some positive constant $D$.
Thus for such a $\dd$ 
\begin{eqnarray*}
\mathbb{P}_{\sigma}(\tau^+(\Omega_{\e})\le \tau(\Omega \setminus \mathcal{A}_{\sqrt{\e}}) )& \ge & \mathbb{P}_{\sigma}(\tau^+(\Omega_{\e})\le \e/2 n^2)-\mathbb{P}_{\sigma}(\tau(\Omega\setminus \mathcal{A}_{\sqrt{\e}})\le \e n^2)\\
 &\ge & 1-e^{-hn}-e^{-Dn^2}.
\end{eqnarray*}
Hence we are done by choosing $b=\frac{h}{2}$.
\end{proof}

We are now ready to prove Lemma \ref{main1}.
\subsection{Proof of Lemma \ref{main1}}\label{secmain1}
We will specify $\dd$ later. However notice that for any small enough $\dd$ if
 $\sigma\in \Gamma_{\dd/2}$ then by Lemma \ref{goodbad1} there exist  positive constants $c,d$ depending on $\dd$ such that for large enough $n$ 
\begin{equation}\label{ht33}
\mathbb{P}_{\sigma}(\tau' >e^{cn^2})  \ge  1- e^{-{d}n^2}
\end{equation}
where $\tau'=\tau(\Omega \setminus \Gamma_{\dd}).$ 
Now notice that by hypothesis $\sigma\in \mathcal{A}_{\e}$ 
and hence $\sigma \in \Omega_{\e}.$
Let $\tau_{(1)},\tau_{(2)}\ldots$ 
be successive return times to $\Omega_{\e}$, 
i.e.\ $\tau_{(1)}=0$ and for all $i\ge 0$, 
$$\tau_{(i+1)}=\inf\{t:t>\tau_{(i)},\sigma_{t} \in \Omega_{\e}\}.$$ 
 
The following containment is true for any positive $b'$: Let $$s:=e^{b'n}.$$
Then 
$$\{\tau(A^c_{\sqrt{\e}})\le s\}\subset \left\{\exists \,i\le s \mbox{ such that }\sigma_{i}\notin \Gamma_{\dd}\right\}\cup\left(\bigcup_{j=0}^{s}\left\{\bigl\{\tau_{(j)}< \tau(A^c_{\sqrt{\e}})\le \tau_{(j+1)}\bigr\}\cap \bigl\{ \sigma_{\tau_{(j)}}\in \Gamma_{\dd}\bigr\}\right\}\right).$$
The above follows by first observing whether there exists  any $i<e^{b'n}$ such that $\sigma_{i}\notin \Gamma_{\dd}$. Also let $j$ be the first index such that  $$\tau_{(j)}< \tau({A^c_{\sqrt{\e}}})\le \tau_{(j+1)}.$$

Notice that on the event $\{\tau(A^c_{\sqrt{\e}})\le s\}$, $$j\le s.$$

Also by definition on the event $\bigl\{\sigma_{i}\in \Gamma_{\dd} \mbox{ for all }  i \le s\bigr\}$, $$\sigma_{\tau_{(j)}}\in \Gamma_{\dd}\cap \Omega_{\e}\cap \mathcal{A}_{\sqrt{\e}}.$$
Thus by the union bound,
\begin{eqnarray*}\mathbb{P}\bigl(\tau(A^c_{\sqrt{\e}})\le e^{b'n}\bigr)&\le & \mathbb{P}(\exists \,i\le s \mbox{ such that }\sigma_{i}\notin \Gamma_{\dd})\\ &+&   \sum_{j=1}^{e^{b'n}}\mathbb{P}\left(\bigl\{\sigma_{\tau_{(j)}}\in\Gamma_{\dd}\cap \Omega_{\e}\cap \mathcal{A}_{\sqrt{\e}}\bigr\}\cap \bigl\{\tau(A^c_{\sqrt{\e}})\le \tau_{(j+1)}\bigr\}\right).
\end{eqnarray*}
Recall that by hypothesis $\sigma \in \Gamma_{\dd/2}.$ Thus by \eqref{ht33} for any small enough $\dd$ there exists $d=d(\dd)>0$ such that for any $b'>0$ the first term is less than $e^{-dn^2}$ for large enough $n$.
Also notice that by Lemma \ref{finalargument1} we can choose $\dd$ such that every term in the sum is at most $e^{-bn}$ for some constant $b>0$. 
Thus for such a $\dd$, putting everything together we get that for any $b'>0$ and large enough $n,$ 
$$\mathbb{P}(\tau(A^c_{\sqrt{\e}}) \le e^{b'n}) \le e^{-dn^2}+\sum_{i=1}^{e^{b'n}}e^{-bn}.$$

Hence by choosing $b'<b$ we get that for large enough $n,$ $$\mathbb{P}(\tau(A^c_{\sqrt{\e}})\le e^{b'n})\le 2e^{(b'-b)n}.$$
The proof is thus complete.
\qed

\subsection{Proof of Theorem \ref{t.quickhit}}\label{pom2} 
We will use Lemma \ref{hitstation} and prove something stronger which will be used in the proof of Theorem \ref{mainresult}. However for brevity we first need some notation.
We start by recalling the sets in  Table \ref{chart}. Now given any positive $\e_1$ and $\e_2$ 
define the set $$\mathcal{C}_{\e_1,\e_2}=\mathcal{A}_{\e_1}\cap \Gamma_{\e_2}.$$ 
We claim that for small enough $\e,\dd>0$ 
there exists  constants $D,D'>0$ such that for any $\sigma\in \Omega,$
\begin{equation}\label{hittingestimate1}
\mathbb{P}_{\sigma}\bigl(\tau(\mathcal{C}_{\sqrt{\e},\dd/2})\ge Dn^2 \bigr)
\le e^{-D'n}.
\end{equation}
Clearly this proves Theorem \ref{t.quickhit} since $\mathcal{C}_{\sqrt{\e},\dd/2} \subset \mathcal{A}_{\sqrt{\e}}$.
Let
\begin{eqnarray*}
\tau' &=& \tau (\Gamma_{\dd/4}) \\
\tau'' &=& \tau({\Gamma_{\dd/2}^\mathsf{c}}) \\
\tau''' &=& \tau({\Omega_{\e}}) \\
\tau'''' &=& \tau({\mathcal{A}_{\sqrt{\e}}}).
\end{eqnarray*}
Recall the notational convention made in Subsection \ref{nc}.
Now to show \eqref{hittingestimate1} 
we notice the following containment of events:
$$\left[\{\tau'\le An^2\} \cap\{\tau'' \ge e^{cn}\mid{\sigma_{\tau'}}\}
\cap \{\tau''' \le An^2 \mid{\sigma_{\tau'}}\}
\cap \{\tau'''' \le An^2 \mid{\sigma_{\tau'''}}\}\right]
\subset \{\tau({\mathcal{C}_{\sqrt{\e},\dd/2}})\le 3An^2\}.$$ 
To see why this containment holds we first notice that
$$\Gamma_{\dd/4}\subset \Gamma_{\dd/2}.$$
Thus the first two events imply that the process 
hits the set $\Gamma_{\dd/2}$ in $An^2$ steps 
and stays inside for an exponential (in $n$) amount of time, 
and in particular stays in $\Gamma_{\dd/2}$ 
from time $An^2$ through time $3An^2$. 
In addition, the third and fourth events together imply that 
regardless of where the chain is at time $An^2$,
the chain enters $\Omega_{\e}$ by time $2An^2$
and then enters $\mathcal{A}_{\sqrt{\e}}$ by time $3An^2$.
Hence in the intersection of the four events,
the hitting time of 
$\mathcal{C}_{\sqrt{\e},\dd/2}=\mathcal{A}_{\sqrt{\e}}\cap \Gamma_{\dd/2}$ 
is at most $3An^2$.  Let $D=3A$.
Now for a large enough constant $A$,  there exists  $D'>0$ such that the probabilities of 
 of all the four events on the left hand side are at least $1-e^{-D'n}$. This follows by Lemmas \ref{closeopt1}, \ref{goodbad1}, \ref{linehit} and \ref{linehit2}  respectively.
Hence by the union bound \eqref{hittingestimate1} follows for a slightly smaller value of $D'$. 
\qed 
\subsection{Proof of Theorem \ref{mainresult}}\label{pom1}
Note that \eqref{hittingestimate1} is true for all small enough choices of $\e$ and $\dd.$ 
However given $\e$ by Lemma \ref{main1} there exist $\dd,b,d>0$ such that for $\sigma \in \mathcal{C}_{\sqrt{\e},\dd/2}$
\begin{equation}\label{last1}\mathbb{P}_{\sigma}
( \tau (\mathcal{A}^{c}_{\e^{1/4}}) >e^{dn})>1-e^{-bn}.
\end{equation}

For such a $\dd$ the proof of Theorem \ref{mainresult} follows by using Lemma \ref{hitstation} with the following choices of parameters:
\begin{eqnarray*}
A&=& \mathcal{C}_{\sqrt{\e},\dd/2}\\
B&=& \mathcal{A}_{{\e}^{1/4}}\\
t_1&=&Dn^2\\
t_2&=&e^{dn}\\
p_1 &=& e^{-D'n}\\
p_2 &=& e^{-bn}.
\end{eqnarray*}
The above choice of parameters satisfy the hypotheses of Lemma \ref{hitstation} by  \eqref{hittingestimate1} and \eqref{last1}.
\qed

\subsection{Proof of Theorem \ref{diffthm1} }\label{pom3}
The proof is a simple corollary of Theorem \ref{mainresult}. Let $k=\frac{2}{\e}.$ For $i=1\ldots k$, let $$\alpha_i=\frac{\e}{2}i.$$ Now define $$\Lambda_{i}=\{(x,y)\in \mathcal{C}_n:\lambda(x,y)\le \alpha_i n^2\}.$$
Choose $\dd=\e^{100}.$ By Theorem \ref{mainresult} for all $i=1,2,\ldots k$ there exists $\beta_i$ such that 
\begin{align}\label{event12}
{\hat{\pi}}_n\left(\forall (x,y)\in \Lambda_i : y\le \alpha_{i}+\dd\right )&\ge 1- e^{-\beta_i n}\\
\nonumber
{\hat{\pi}}_n\left(\forall (x,y)\notin \Lambda_i : y\ge \alpha_{i}-\dd\right.)&\ge 1-e^{-\beta_i n}.
\end{align}
By union bound there exists $\beta$ such that the events on the LHS of \eqref{event12} simultaneously hold for all $i$ with failure probability at most $e^{-\beta n}.$ Clearly this completes the proof. To see this suppose that all the events hold and there exists $(x,y) \in \mathcal{C}_n$ such that $\frac{\lambda(x,y)}{n^2}-y \ge \e.$ Let $1\le j\le k$ be such that $$\alpha_{j-1}n^2 <\lambda(x,y)\le \alpha_{j}n^2$$ (take $\alpha_0=0$). Now by the second event in \eqref{event12} $y\ge \alpha_{j-1}-\dd$ which implies that
$$\frac{\lambda(x,y)}{n^2}-y \le \alpha_{j}-\alpha_{j-1}+\dd =\frac{\e}{2}+\e^{100}< \e$$ which is a contradiction.
Similarly one can show $\frac{\lambda(x,y)}{n^2}-y >-\e.$ Details are omitted.  \qed.

\section*{Appendix}
\appendix
%\supersection{appendix}
\section{IDLA on the cylinder}\label{pf}
The proof of  Theorem \ref{thm:IDLAcylinder} follows by adapting the ideas of the proof appearing in \cite{lbg}. The proof in  \cite{lbg} follows from a series of lemmas which we now state in our setting. Recall \eqref{setnotation}.
 Let $\tau_{z}$, $\tilde{\tau}_{kn}$ be the hitting times of  $\phi(z)$ and $\phi(Y_{kn,n})$ respectively.   
\begin{lemma}\label{avmore}For any $z=(x,y)\in \mathcal{C}_n$ with $y\le kn$
 
$$kn^2\mathbb{P}_{Y_{0,n}}(\tau_{z}<\tilde \tau_{kn}) \ge \sum_{w\in C_n\times[0,kn)} \mathbb{P}_{\phi(w)}(\tau_{z}<\tilde \tau_{kn})$$
where $\mathbb{P}_{\phi(w)}$ and $\mathbb{P}_{Y_{0,n}}$ are the random walk measures on $\mathcal{C}_n$ with starting point $\phi(w)$ and uniform over $Y_{0,n}$ respectively.
%kn^2\mathbb{P}_{Y_{0,n}}(\tau_{z}<\tau_{kn})\ge\sum_{w\in C_n\times[0,kn]} \mathbb{P}_w(\tilde \tau_{z}<\tilde \tau_{kn}) \ge  
\end{lemma}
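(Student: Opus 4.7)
Define $f : \mathcal{C}_n \to [0,1]$ by $f(u) := \PP_u(\tau_z < \tilde\tau_{kn})$ and the row sums $F(j) := \sum_{u \in Y_{j,n}} f(u)$. Since the walk starts uniformly on $Y_{0,n}$, the left-hand side of the claimed inequality equals $kn \cdot F(0)$, and the right-hand side equals $\sum_{j=0}^{kn-1} F(\phi(j))$, a sum of $F$ over exactly $kn$ rows lying in $[0, \phi(kn)-1]$. The plan is therefore to show that $F$ is non-increasing throughout $[0, \phi(kn)]$, so that each term on the right is bounded above by $F(0)$.

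To establish monotonicity of $F$, I would use that $f$ is harmonic with respect to simple random walk on $\mathcal{C}_n$ at every vertex other than the source $\phi(z)$, where $f = 1$, and the absorbing row $\phi(Y_{kn,n})$, where $f = 0$. Summing the discrete Laplacian of $f$ over a row yields the identity
\[
\sum_{u \in Y_{j,n}} \Delta f(u) = \tfrac{1}{4}\bigl[F(j-1) + F(j+1) - 2 F(j)\bigr] \qquad (j \ge 1),
\]
with a boundary version $\tfrac{1}{4}[F(1) - F(0)]$ at $j = 0$ (using the $4$-regular self-loop convention at the reflecting boundary). Let $j_* := \phi(y)$ denote the row containing $\phi(z)$. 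Every row in $[0, j_* - 1]$ consists entirely of harmonic vertices, so the identities force $F(j-1) + F(j+1) = 2F(j)$ on the interior rows and $F(0) = F(1)$ at the bottom; these combine to give $F$ constant on $[0, j_*]$. At row $j_*$ the sole non-harmonic vertex is $\phi(z)$, and $\Delta f(\phi(z)) \le 0$ because $f$ attains its global maximum of $1$ there, so $F(j_* + 1) \le 2F(j_*) - F(j_* - 1) = F(j_*)$. Finally, every row in $[j_* + 1, \phi(kn)]$ is harmonic, so $F$ is linear there, non-negative, and equals $0$ at $\phi(kn)$; hence $F$ is non-increasing on this interval as well. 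Patching the three ranges yields monotonicity of $F$ on all of $[0, \phi(kn)]$, completing the proof.

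The main place to tread carefully is the boundary row $y = 0$: the identity $F(0) = F(1)$ depends on the precise convention for the walk at the bottom (self-loop or pure reflection), and one must verify this matches the generator used to define the IDLA process on $\mathcal{C}_n$. The one mildly degenerate case, $y = 0$ with $\phi(0) = 0$ so that $\phi(z)$ itself lies in the bottom row, poses no real obstacle: the bottom-row sum of $\Delta f$ still equals $\Delta f(\phi(z)) \le 0$, giving $F(1) \le F(0)$ directly, after which the analysis on $[1, \phi(kn)]$ proceeds unchanged.
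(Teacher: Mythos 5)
Your proof is correct and takes a genuinely different route from the paper's. The paper's argument is a short strong--Markov computation: the walk started uniformly on $Y_{0,n}$ must hit every row $Y_{\phi(j),n}$ with $j<kn$ before it can absorb at $Y_{\phi(kn),n}$, and by the cyclic symmetry in the first coordinate its distribution on first hitting such a row is again uniform, so for each $j$ one has $\PP_{Y_{0,n}}(\tau_z<\tilde\tau_{kn})\ge \frac1n\sum_{w\in Y_{\phi(j),n}}\PP_w(\tau_z<\tilde\tau_{kn})$; summing over $j=0,\dots,kn-1$ gives the lemma at once. You instead prove the stronger statement that the row sums $F(j)$ are non-increasing on $\{0,\dots,\phi(kn)\}$ via a discrete-harmonicity argument: the row sum of the Laplacian is the second difference of $F$, which vanishes on harmonic rows, gives $F(0)=F(1)$ from the reflecting boundary, and has the favourable sign at the row of $\phi(z)$ by the maximum principle, after which linearity down to $F(\phi(kn))=0$ finishes. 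The paper's route is shorter and leans on the exact uniformity of the level-hitting distribution (hence on the full rotational symmetry of $C_n$); yours uses only that each row is $2$-regular with a reflecting bottom, so it would carry over verbatim to product graphs with a less symmetric cross-section. Two cosmetic remarks: your sign convention for $\Delta$ is opposite to the paper's (which sets $\Delta f(v)=f(v)-\frac14\sum_{w\sim v}f(w)$, so $\Delta f(\phi(z))\ge 0$ at the global maximum, and the row-sum equals $-\frac14[F(j-1)+F(j+1)-2F(j)]$); your two sign choices are mutually consistent, so the conclusion $F(j_*+1)\le F(j_*)$ is unaffected, but if spliced into the paper the convention should be harmonized. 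On the boundary caveat you raise: the walk on $\mathcal{C}_n$ implicit in the paper is the reflecting walk given by $X(t)=(Y_1(t)\bmod n,\,|Y_2(t)|)$, not the self-loop walk used for $\Cyl_n$, which makes the bottom row-sum $\frac12[F(0)-F(1)]$ rather than $\frac14[F(0)-F(1)]$; the needed conclusion $F(0)=F(1)$ on a harmonic bottom row is unchanged.
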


\begin{proof}   By symmetry in the first coordinate under $\mathbb{P}_{Y_{0,n}}$ for any $j$ the distribution of the random walk  when it hits the set $Y_{j,n}$ is uniform over the set $Y_{j,n}$. Hence by the Markov property the chance that random walk hits $\phi(z)$ before $Y_{\phi(kn),n}$ after reaching the line $Y_{\phi(j),n}$ is $$\frac{1}{n}\sum_{w\in Y_{\phi(j),n}}\mathbb{P}_{w}(\tau_{z}<\tilde \tau_{kn}).$$ Thus clearly for any $j< kn$
\begin{equation}\label{hitsum}
\mathbb{P}_{Y_{0,n}}( \tau_{z}<\tilde \tau_{kn})\ge \frac{1}{n}\sum_{w\in Y_{\phi(j),n}}\mathbb{P}_{w}(\tau_{z}< \tilde \tau_{kn}).
\end{equation}
The lemma follows by summing over $j$ from $0$ through $kn-1$.
\end{proof}
\begin{lemma}\label{greensfunctionlower}Given positive numbers $k$ and $\e$ with $\e< 1$, then there exists $\beta=\beta(k,\e)$ such that for all $z=(x,y)$ with $y\le (1-\e)kn$  
%$$\sum_{w\in C_n\times[0,kn]} \mathbb{P}_w(\tilde \tau_{z}<\tilde \tau_{kn})\ge \frac{\beta n^2}{\ln n}$$
$$\mathbb{P}_{Y_{0,n}}(\tau_{z}<\tilde \tau_{kn,n})\ge \frac{\beta }{\ln n}.$$
\end{lemma}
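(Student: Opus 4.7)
The plan is to express the hitting probability as a ratio of Green's functions for SRW on $\mathcal{C}_n$ killed upon reaching row $\phi(kn)$, and to bound the numerator and the denominator separately. Set $N := \phi(kn)$ and $y' := \phi(y)$. Since $\phi$ is strictly increasing with unit-minimum increments and $|\mathcal{I}_n| \le n$, we have $kn \le N \le (k+1)n$ and $N - y' \ge kn - y \ge \epsilon kn$, so $y' \le (1 - \epsilon')N$ with $\epsilon' := \epsilon k/(k+1)$. Writing $z^* := \phi(z) = (x, y')$ and $A := \phi(Y_{kn,n}) = C_n \times \{N\}$, the lemma reduces to showing that, for SRW on $\mathcal{C}_n$,
\[ \mathbb{P}_{Y_{0,n}}(\tau_{z^*} < \tau_A) \ge \beta/\ln n. \]
Let $G(v, w)$ denote the Green's function of this killed walk. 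The classical first-hit identity $\mathbb{P}_v(\tau_{z^*} < \tau_A) = G(v, z^*)/G(z^*, z^*)$, averaged over $v \in Y_{0,n}$, gives
\[ \mathbb{P}_{Y_{0,n}}(\tau_{z^*} < \tau_A) = \frac{1}{n\, G(z^*, z^*)} \sum_{v \in Y_{0,n}} G(v, z^*), \]
so it suffices to prove (i) $\sum_{v \in Y_{0,n}} G(v, z^*) \ge c_1(k, \epsilon)\, n$ and (ii) $G(z^*, z^*) \le c_2(k) \ln n$.

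For (i), reversibility of SRW ($\deg(v) G(v, z^*) = \deg(z^*) G(z^*, v)$) converts the numerator, up to a universal constant stemming from the boundary degree $3$ versus interior degree $4$, to the expected occupation time of $Y_{0,n}$ by the walk started at $z^*$. Projecting onto the second coordinate yields a lazy SRW on $\{0, 1, \ldots, N\}$ (interior: stay with probability $1/2$, $\pm 1$ each with probability $1/4$; at state $0$: stay $2/3$, up $1/3$) absorbed at $N$. A short direct computation gives $\mathbb{E}_{y'}[\text{occupation of } 0] = 3(N - y')$: starting from $0$, each visit block at $0$ has mean length $3$ (geometric in the up-probability $1/3$), and the number of visit blocks before absorption at $N$ is geometric with success probability $1/N$ (by the harmonic function $h(y) = 1 - y/N$ for the absorbing walk), giving $3N$; starting from $y'$, multiply by the hit-$0$-before-$N$ probability $1 - y'/N$. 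This yields $\sum_v G(v, z^*) \gtrsim N - y' \ge \epsilon' kn$.

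For (ii), I would use the electrical-network identity $G(z^*, z^*) = \deg(z^*)\, R_{\mathrm{eff}}(z^*, A)$ and bound the effective resistance from $z^*$ to the row $A$ on $\mathcal{C}_n$. At scales up to $n$ the cylinder looks like $\mathbb{Z}^2$, contributing resistance $O(\ln n)$ (the annular integral $\int_1^n dr/r$); at scales larger than $n$ the cylinder is effectively one-dimensional with $n$ parallel channels, contributing an additional $O((N - y')/n) = O(k)$. Equivalently, one can lift to the universal cover $\mathbb{Z} \times \mathbb{Z}_{\ge 0}$ and bound $G(z^*, z^*) = \sum_{j} \widetilde{G}(z^*, z^* + (jn, 0))$ by standard local-CLT estimates on the half-strip killed at the top. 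Either way, $G(z^*, z^*) = O_k(\ln n)$ uniformly in $y'$, and combining with (i) yields the claim with $\beta = c_1/c_2$.

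The main obstacle is (ii): the resistance/Green's function estimate has to be uniform over all target heights $y' \le (1 - \epsilon')N$, including values of $y'$ close to the reflecting boundary row $0$ where both the vertex degree and the walk dynamics differ from the interior. The bulk 2D estimate is classical, but its adaptation to the half-strip with a reflecting boundary --- and the interpolation between the 2D ($d \lesssim n$) and 1D ($d \gtrsim n$) regimes --- requires care and is precisely the step where the analogy with the arguments in \cite{lbg} is most directly invoked.
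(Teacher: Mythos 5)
Your proposal is correct in outline, but it takes a genuinely different route from the paper. The paper's proof is a short two-step ``get close, then hit'' argument: by the exact horizontal symmetry (the same observation as in Lemma~\ref{avmore}), the walk started uniformly on $Y_{0,n}$ first meets the row of $\phi(z)$ at a uniform point, hence lands within distance $\min(\e k/2,1/4)n$ of $\phi(z)$ with probability at least $\min(\e k,1/2)$; from there the classical estimate (\cite[Prop 1.6.7]{mrw}) gives probability $\Omega(1/\log n)$ of hitting $\phi(z)$ before leaving a ball that, by the hypothesis $\phi(kn)-\phi(y)\ge \e kn$, stays below the killing row, the reflecting bottom being handled by viewing the cylinder walk as the projection $(Y_1 \bmod n, |Y_2|)$ of planar SRW. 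You instead write the hitting probability as $\frac{1}{nG(z^*,z^*)}\sum_{v\in Y_{0,n}}G(v,z^*)$, lower-bound the numerator by reversibility plus an exact one-dimensional occupation-time computation (this step correctly plays the role of the paper's ``constant probability of landing nearby,'' and your gambler's-ruin bookkeeping, including $N-y'\ge kn-y\ge \e kn$, is right), and upper-bound $G(z^*,z^*)$ by $O_k(\ln n)$ via effective resistance or a reflected local-CLT sum. Both arguments are sound; the trade-off is that the paper outsources the only genuinely two-dimensional estimate to a cited classical fact, whereas your denominator bound is essentially that same fact in electrical clothing ($G_A(z^*,z^*)=\deg(z^*)R_{\mathrm{eff}}(z^*,A)$ is the reciprocal of an escape probability), so you must rebuild it on the half-strip with the reflecting boundary and the $2$D-to-$1$D crossover --- doable by the flow construction or reflection you sketch, but it is the one step of your plan that still requires a full argument rather than a citation. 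You correctly identify this as the main obstacle, and nothing in your outline would fail.
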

\begin{proof} Since $\phi(kn)-\phi(y)\ge \e kn$ the semi-disc of radius $\min(\e k,1/2)n$ around $z$ lies below the line $Y_{\phi(kn),n}$. The random walk starting uniformly on $Y_{o,n}$ hits the interval $(z-\min(\e k/2 ,1/4)n,z+\min(\e k/2 ,1/4)n)$ with probability at least $\min(\e k,1/2)$. Now the lemma follows by the standard result that the random walk starting within radius $n/2$ has $\Omega(\frac{1}{\log n})$ chance of returning to the origin before exiting the ball of radius $n$ in $\mathbb{Z}^2.$ This fact can be found in \cite[Prop 1.6.7]{mrw}.
\end{proof}

The next result is the standard Azuma-Hoeffding inequality 
stated for sums of indicator variables.
\begin{lemma}\label{ldp} For any positive integer $n$ 
if $X_i$ $i=1,2 \ldots n$ are independent indicator variables then
$$\mathbb{P}(|\sum_{i=1}^{n}X_i-\mu|\ge t)\le 2 e^{-\frac{t^2}{4n}}$$
where $\displaystyle{\mu=\E\sum_{i=1}^{n}X_i}.$
\end{lemma}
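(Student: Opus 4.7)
The plan is to establish this standard two-sided concentration inequality by the exponential moment method (Chernoff bound). Set $S_n = \sum_{i=1}^n X_i$ and $Y_i = X_i - \E X_i$, so the $Y_i$ are independent, mean-zero, and each takes values in the interval $[-\E X_i,\, 1-\E X_i]$, an interval of length exactly $1$.

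For the upper tail, I would apply Markov's inequality to $e^{\lambda(S_n-\mu)}$ for an arbitrary $\lambda>0$, and use independence to split the resulting expectation into a product:
\[
\PP(S_n - \mu \ge t) \;\le\; e^{-\lambda t}\prod_{i=1}^n \E\bigl[e^{\lambda Y_i}\bigr].
\]
Each factor is controlled by Hoeffding's lemma: a mean-zero random variable $Z$ supported in an interval of length $\ell$ satisfies $\E e^{\lambda Z} \le e^{\lambda^2 \ell^2/8}$. Applied with $\ell=1$, this yields $\PP(S_n-\mu \ge t) \le e^{-\lambda t + n\lambda^2/8}$. Optimizing in $\lambda$ (take $\lambda = 4t/n$) gives $\PP(S_n - \mu \ge t) \le e^{-2t^2/n}$, which is already stronger than the claimed $e^{-t^2/(4n)}$. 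Applying the same argument to $-Y_i$ in place of $Y_i$ controls the lower tail, and a union bound produces the factor $2$ in the statement.

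The only analytic ingredient beyond the Chernoff trick is Hoeffding's lemma itself, which is the classical convexity argument: for $Z \in [a,b]$ with $\E Z = 0$, the bound $e^{\lambda z} \le \frac{b-z}{b-a}e^{\lambda a} + \frac{z-a}{b-a}e^{\lambda b}$ gives an explicit upper bound on $\E e^{\lambda Z}$; one then verifies that the logarithm of that bound is at most $\lambda^2(b-a)^2/8$ by analyzing its second derivative, which is bounded by $(b-a)^2/4$, and integrating twice from $\lambda = 0$. Since the required tail bound in the statement is looser than the tight Hoeffding bound by a factor of $8$ in the exponent, no subtle estimates are needed; there is really no "main obstacle" here beyond invoking (or briefly reproducing) this textbook moment-generating-function estimate.
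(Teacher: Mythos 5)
Your proof is correct: the Chernoff--Hoeffding argument with Hoeffding's lemma and a union bound gives $2e^{-2t^2/n}$, which implies the stated (weaker) bound $2e^{-t^2/(4n)}$. The paper offers no proof of this lemma at all, simply citing it as the standard Azuma--Hoeffding inequality for sums of independent indicators, so your write-up is just the canonical textbook argument the authors are implicitly invoking.
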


\subsection{Hitting estimates}\label{seche}

Consider the simple random walk $(Y(t))_{t \geq 0}$ on $\Z^2$.  

\begin{lemma}
\label{l.discretecauchy}
For $(x,y) \in \Z^2$, let $h(x,y) = \PP_{(x,y)} \{ Y(\tau (\Z \times \{0\})) = (0,0) \}$ be the probability of first hitting the $x$-axis at the origin. Then
	\[ h(x,y) = \frac{y}{\pi(x^2+y^2)} + O( \frac{1}{x^2+y^2} ). \]
\end{lemma}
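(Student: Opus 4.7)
My plan is to derive an exact formula for $h(x,y)$ in terms of the potential kernel $a$ of simple random walk on $\Z^2$, and then insert the classical asymptotic expansion of $a$. By the symmetry $Y\mapsto(Y_1,-Y_2)$ it suffices to handle $y\ge 1$; the case $y=0$ is trivial and $y<0$ follows by reflection. Write $H:=\Z\times\Z_{\ge 1}$ and $\tau:=\tau(\Z\times\{0\})$. Because the walk remains in $H$ until $\tau$, any trajectory exiting at the origin must make its last step from $(0,1)$ to $(0,0)$; summing over the hitting time gives
$$h(x,y)\;=\;\tfrac14\sum_{t\ge 0}\PP_{(x,y)}\{Y(t)=(0,1),\,\tau>t\}\;=\;\tfrac14\,G_H\!\bigl((x,y),(0,1)\bigr),$$
where $G_H$ is the Green function of the walk killed on the $x$-axis.

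Next, the discrete method of images gives a closed form. The function $F(v):=a(v-(0,-1))-a(v-(0,1))$ is bounded, vanishes on the $x$-axis (using the symmetry $a(x,y)=a(x,-y)$), and its discrete Laplacian equals $-\delta_{(0,1)}$ on $H$ (since the reflected pole sits outside $H$). By uniqueness of the bounded Green function we get the exact identity
$$h(x,y)\;=\;\tfrac14\bigl(a(x,y+1)-a(x,y-1)\bigr).$$

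Finally, I plug in the classical expansion (Stöhr; see e.g.\ Lawler's book) $a(v)=\tfrac{2}{\pi}\log|v|+\kappa+O(|v|^{-2})$ valid for $v\ne 0$. Set $r:=\sqrt{x^2+y^2}$. Since $y\ge 1$, both $(x,y\pm 1)$ have $|v|^2$ comparable to $r^2$, so the two $O(|v|^{-2})$ errors combine to $O(1/r^2)$, leaving
$$h(x,y)\;=\;\frac{1}{4\pi}\log\!\frac{x^2+(y+1)^2}{x^2+(y-1)^2}+O(1/r^2).$$
A brief case split ($y\le r/2$ vs.\ $y>r/2$) shows $x^2+(y-1)^2\gtrsim r^2$, so $u:=4y/(x^2+(y-1)^2)=O(1/r)$; the identity $(x^2+(y+1)^2)/(x^2+(y-1)^2)=1+u$ and the expansion $\log(1+u)=u+O(u^2)$, together with $1/(x^2+(y-1)^2)=1/r^2+O(1/r^3)$, give $\log(\cdots)=4y/r^2+O(1/r^2)$. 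Dividing by $4\pi$ yields the claimed $y/(\pi(x^2+y^2))+O(1/(x^2+y^2))$, and for bounded $r$ the bound is vacuous since $h\le 1$. The main obstacle is the expansion of $a$ with the $O(|v|^{-2})$ remainder: this is a well-known but non-trivial classical result that must be cited rather than reproved here. Everything else is routine bookkeeping; the one subtlety is to ensure $|v|\asymp r$ at both evaluation points, which the hypothesis $y\ge 1$ together with the case split handles cleanly.
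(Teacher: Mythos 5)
Your proof is correct and follows essentially the same route as the paper: both arguments reduce the lemma to the exact identity $h(x,y)=\tfrac14\bigl(a(x,y+1)-a(x,y-1)\bigr)$ via a reflection argument (the paper odd-extends $h$ and computes its discrete Laplacian; you phrase it as the method of images for the half-plane Green function, which is the same idea), and then both invoke the classical expansion of the potential kernel $a$. Your asymptotic bookkeeping at the end is a correct filling-in of a step the paper leaves implicit.
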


\begin{proof}
Let
	\[ \widetilde h(x,y) = \begin{cases} h(x,y), & y>0 \\
							0, &y=0 \\
						     -h(x,y), &y<0. \end{cases} \]
The discrete Laplacian
	\[ \Delta \widetilde h(x,y) = \widetilde h(x,y) - \frac{\widetilde h(x+1,y)+ \widetilde h(x-1,y)+ \widetilde h(x,y+1) + \widetilde h(x,y-1)}{4} \]
vanishes except when $(x,y) = (0,\pm 1)$, and $\Delta \widetilde h(0,\pm 1) = \pm \frac14$.  Since $\widetilde h$ vanishes at $\infty$ it follows that
	\begin{equation} \label{e.discretederiv} \widetilde h(x,y) = \frac{a(x,y+1) - a(x,y-1)}{4} \end{equation}
where 
	\[ a(x,y) = \frac{1}{\pi} \log(x^2+y^2) + \kappa + O(\frac{1}{x^2+y^2}) \]
is the recurrent potential kernel for $\Z^2$ (see \cite{kozma}).  Here $\kappa$ is a constant whose value is irrelevant because it cancels in the difference \eqref{e.discretederiv}.
	\end{proof}

Let $X(\cdot)$ be the simple symmetric random walk on the half-infinite cylinder $\mathcal{C}_n=C_n \times \mathbb{Z}_{\ge 0}.$
\begin{lemma}\label{he} For any positive integers $j<k$, 
with $\Delta=k-j<n $:\\\\
$i.$ for any $w \in Y_{k,n}$,
$$\mathbb{P}_{w}(\tau(j)<\tau^{+}(k))<\frac{1}{\Delta}$$
where $\tau(j)$ and $\tau^{+}(k)$ are the hitting and positive hitting times of $Y_{j,n}$ and $Y_{k,n}$ respectively for $X(\cdot)$.\\\\
$ii.$ there exists a constant $J$ such that for $w\in Y_{j,n}$ and any subset $B\subset Y_{k,n}$,
$$\mathbb{P}_w(X({\tau(k)})\in B)<J|B|/\Delta.$$
\end{lemma}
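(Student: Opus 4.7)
The plan is to derive both parts from classical random-walk estimates applied to the vertical coordinate.

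For part (i), first project onto the $y$-coordinate. Under SRW on $\mathcal{C}_n$, the $y$-coordinate performs a lazy nearest-neighbor walk on $\mathbb{Z}_{\ge 0}$ with holding probability $1/2$ (coming from horizontal moves, plus self-loops at $y=0$). Starting at $w \in Y_{k,n}$, condition on the first step: a horizontal step (probability $1/2$) makes $\tau^+(k)=1$; a vertical upward step (probability $1/4$) lands at $y=k+1$, from which the nearest-neighbor $y$-walk cannot reach $j<k$ without first revisiting $y=k$, so the event fails; a vertical downward step (probability $1/4$) lands at $y=k-1$, after which the event becomes classical gambler's ruin on the interval $\{j,\ldots,k\}$ started at $k-1$, which succeeds with probability $(k-(k-1))/(k-j)=1/\Delta$. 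Summing, $\mathbb{P}_w(\tau(j)<\tau^+(k)) = 1/(4\Delta)<1/\Delta$. The hypothesis $\Delta<n$ is not used here.

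For part (ii), the strategy is to prove a pointwise upper bound $\mathbb{P}_w(X(\tau(k))=v)\le J/\Delta$ for every $v\in Y_{k,n}$; then summing over $v\in B$ gives the claim. The per-vertex bound will come from Lemma \ref{l.discretecauchy} applied to a lifted walk. Lifting $\mathcal{C}_n$ to its universal cover $\mathbb{Z}\times\mathbb{Z}_{\ge 0}$ and then extending to $\mathbb{Z}^2$ by reflecting across $y=0$ produces a walk that, away from the bottom boundary, is genuine SRW on $\mathbb{Z}^2$. By Lemma \ref{l.discretecauchy}, translated vertically by $-k$ and horizontally by $-x$, the planar walk started at $(\tilde x_0,j)$ first hits the line $y=k$ at $(\tilde x,k)$ with probability $\tfrac{\Delta}{\pi((\tilde x_0-\tilde x)^2+\Delta^2)}+O\!\bigl(\tfrac{1}{(\tilde x_0-\tilde x)^2+\Delta^2}\bigr)$, uniformly $O(1/\Delta)$. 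Projecting back to the cylinder, a point $v=(x,k)\in Y_{k,n}$ receives contributions from its lifts $\{(x+mn,k):m\in\mathbb{Z}\}$: the central term ($m=0$) contributes $O(1/\Delta)$, and the wrap-around tail obeys $\sum_{m\ne 0}\tfrac{\Delta}{(mn)^2}=O(\Delta/n^2)=O(1/\Delta)$ using $\Delta<n$.

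The main obstacle is reducing the cylinder/half-plane walk cleanly to planar SRW, because the self-loops at $y=0$ and the reflection do not produce an exact copy of $\mathbb{Z}^2$-SRW at the boundary. Since only an upper bound on the hitting density is needed, the discrepancy can be absorbed into the constant $J$ by a comparison argument. One clean version: first show via the strong Markov property (splitting on whether the walk reaches $Y_{0,n}$ before $Y_{k,n}$) that it suffices to handle the case $j=0$; and then compare the half-plane Green's function on the strip $0\le y\le k$ to the planar Green's function via the standard image-method reflection, which changes constants but preserves the $O(1/\Delta)$ order.
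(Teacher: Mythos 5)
Your part (i) matches the paper's gambler's-ruin argument (and your first-step conditioning gives the sharper constant $1/(4\Delta)$, which the paper simply rounds up to $1/\Delta$). Your part (ii) also rests on the same idea the paper uses --- lift the cylinder walk to $\mathbb{Z}^2$ and apply Lemma~\ref{l.discretecauchy} --- but two points deserve correction. First, your worry about the boundary at $y=0$ is unfounded, and the elaborate ``clean version'' (reduce to $j=0$, compare Green's functions, absorb the discrepancy into $J$) is unnecessary: the walk on $\mathcal{C}_n=C_n\times\mathbb{Z}_{\ge 0}$ used in this section has no self-loops, and the map $Y(t)\mapsto\bigl(Y_1(t)\bmod n,\,|Y_2(t)|\bigr)$ is an \emph{exact} coupling of SRW on $\mathbb{Z}^2$ with SRW on $\mathcal{C}_n$, so there is no boundary discrepancy to absorb. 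Second, after lifting, the cylinder stopping time $\tau(k)$ becomes $\tau_1(k)\wedge\tau_1(-k)$ for the planar walk, and the preimage of $v=(x,k)$ consists of the lifts $\{(x+mn,k)\}$ \emph{and} the reflected lifts $\{(x+mn,-k)\}$; you summed only over the former. The reflected family contributes $\sum_i h(-x+in,\,k+j)=O\!\left(\tfrac{1}{k+j}\right)=O\!\left(\tfrac{1}{\Delta}\right)$, which the paper handles with a one-line union bound and absorbs into $J$. With these two clarifications your argument closes exactly as the paper's does.
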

\begin{proof}
$i.$ is the following standard result about one-dimensional random walk: 
starting from $1$ the probability of hitting $\Delta$ before $0$ 
is $\frac{1}{\Delta}.$ 
%For details see \cite{lpw}.
% \\
\\
\noindent
Now we prove $ii.$  Clearly it suffices to prove it in the case when $B$ consists of a single element.   It is easy to check that if $Y(t)=(Y_1(t),Y_2(t))$ is the simple random walk on $\mathbb{Z}^2$,
then
\begin{equation}\label{proj12}
X(t)=(Y_1(t)\text{ mod } n,|Y_2(t)|)
\end{equation} is distributed as the simple random walk on $\mathcal{C}_n.$
For any $\ell\in \mathbb{Z}$ let $\tau_1(\ell)$ be the hitting time of the line $y=\ell,$ for $Y(t)$. 
Clearly by \eqref{proj12} for  $w=(0,j),z=(z_1,k)\in \mathcal{C}_n,$
$$\mathbb{P}_w(X({\tau(k)})=z)=\sum_{i=-\infty}^{\infty}P_{(0,j)}\left\{Y(\tau_1(k)\wedge \tau_1(-k))\in\{(z_1+in,k),(z_1+in,-k)\}\right\}.$$
 By union bound the RHS is at most 
\begin{align*}
\sum_{i=-\infty}^{\infty}P_{(0,j)}\left\{Y(\tau_1(k))=(z_1+in,k)\right\} 
+\sum_{i=-\infty}^{\infty}P_{(0,j)}\left\{Y(\tau_1(-k))=(z_1+in,-k)\right\}.
\end{align*}
Using the notation in Lemma \ref{l.discretecauchy} we can write the above sum as 
$$
\sum_{i=-\infty}^{\infty}[h(-z_1+in,\Delta)+h(-z_1+in,k+j)]
$$
By Lemma \ref{l.discretecauchy} the above sum is $$O(\frac{1}{\Delta})+O(\frac{1}{k+j})=O(\frac{1}{\Delta}).$$
Hence we are done.
\end{proof}

\subsection{Proof of Theorem \ref{thm:IDLAcylinder}}\label{secidlaproof}
Equipped with the results in the previous subsection the proof of Theorem \ref{thm:IDLAcylinder} will now be completed by following the steps in \cite{lbg} .\\\\
\textbf{Lower bound}: 
It suffices to show $C_n\times {[0,(1-\epsilon)kn]}\subset A({(1+\epsilon)kn^2})$. Fix $z\in C_n\times {(1-\epsilon)kn}$ For any positive integer $i$ we associate the following stopping times to the $i^{th}$ walker:  
\begin{itemize}
\item $\sigma^i$: the stopping time in the IDLA process 
% \\
\item $\tau^{i}_z$: the hitting time of $\phi(z)$
% \\
%for $z\in C_n$ where $C_n$ is defined in \eqref{infinitecylinder}\\
\item $\tau^{i}_{kn,n}$:  the hitting time of the set $\phi(Y_{kn,n}).$
\end{itemize}   
%Let $z\in C_n\times{(1-\epsilon)kn} $. 
Now we define the random variables
\begin{eqnarray*}
N&=&\sum_{i=1}^{{(1+\epsilon)kn^2}}\mathbf{1}_{(\tau^{i}_z<\sigma^{i})}\mbox{, the number of particles that visit $\phi(z)$ before stopping}\\
M&=&\sum_{i=1}^{{(1+\epsilon)kn^2}}\mathbf{1}_{(\tau^{i}_z<\tau^{i}_{kn,n})}\mbox{, the number of particles that visit $\phi(z)$ before reaching $\phi(Y_{kn,n})$}\\
L&=&\sum_{i=1}^{{(1+\epsilon)kn^2}}\mathbf{1}_{(\sigma^{i}<\tau^{i}_z <\tau^{i}_{kn,n})}  \mbox{, the number of particles that visit $\phi(z)$ before reaching $\phi(Y_{kn,n})$}\\
& &\mbox{ but after stopping}.
\end{eqnarray*}
Thus $$N\ge M-L.$$
Hence 
\begin{equation}\label{keybound}
\mathbb{P}\Bigl(z\notin A((1+\epsilon)kn^2)\Bigr)=\mathbb{P}(N=0)\le \mathbb{P}(M<a)+\mathbb{P}(L>a).
\end{equation}
where the last inequality holds for any $a.$
Now by definition  
\begin{equation}\label{inter1}
\E(M)=(1+\epsilon)kn^2\:\mathbb{P}_{Y_{0,n}}(\tau_{z}<\tau_{kn,n}).
\end{equation}
We now bound the expectation of $L$. Define the following quantity: let independent random walks start from each $w\in {\phi(C_n\times [0,kn])}$ and let $$\tilde{L}=\sum_{w\in \phi(C_n\times [0,kn])}\mathbf{1}{(\tau_{z}<\tau_{kn,n} \mbox{ for the walker starting at } w)}.$$
Clearly $L\le \tilde{L}.$ Hence the RHS of \eqref{keybound} can be upper bounded by $\mathbb{P}(M<a)+\mathbb{P}(\tilde{L}>a)$.
Now
$$\E(\tilde{L})= \sum_{w\in \phi(C_n\times [0,kn])} \mathbb{P}_w(\tau_{z}<\tau_{kn,n}).$$
Also by Lemma \ref{avmore} and \eqref{inter1} $$ \Bigl(1+\frac{\e}{2}\Bigr)\E(\tilde{L})\le \E(M).$$

Choose $a=(1+\epsilon/4)\max \bigl(\frac{\beta kn^2}{\ln n},\E(\tilde{L})\bigr)$ where the $\beta$ appears in Lemma \ref{greensfunctionlower}. Now using Lemma \ref{ldp} we get
\begin{eqnarray*}
\mathbb{P}(\tilde{L}>a)& \le & \exp(-dn)\\
\mathbb{P}(M<a)& \le & \exp(-dn)
\end{eqnarray*}
for some constant $d=d(\e,k)>0.$
Thus in \eqref{keybound} we get
$$\mathbb{P}(M<a)+\mathbb{P}(L>a)\le 2\exp(-dn).$$ 
The proof of the lower bound now follows by taking the union bound:
\begin{eqnarray*}
\mathbb{P}(C_n\times {[0,(1-\epsilon)kn]} \subset A((1+\epsilon)kn^2)&\le & \sum_{z\in C_n\times {[0,(1-\epsilon)kn]}} \mathbb{P}\Bigl(z\notin A((1+\epsilon)kn^2)\Bigr)\\
& \le & \sum_{z\in C_n\times {[0,(1-\epsilon)kn]}}2\exp(-dn)\\
&\le & \exp(-cn).
\end{eqnarray*}
where the last inequality holds for large enough $n$ when $c$ is smaller than $d$. \\\\
\textbf{Upper bound}: In \cite{lbg} the upper bound is proven by showing that the growth of the cluster above level $(1+\epsilon)kn$ is dominated by a multitype branching process. However here we slightly modify the proof to take into account that in our situation the initial cluster is not empty. We define some notation.
Let us denote the particles making it out of level $\phi(Y_{kn,n})$ by $w_1,w_2,\ldots$ and define $$\tilde{A}(j)=A(w_j).$$
Choose $k_0=k(1+\sqrt{\e})n.$ 
We define 
\begin{equation}\label{moddef}
\tilde{Y}_{\ell,n}:=Y_{k_0+\ell,n}.
\end{equation}
%i.e $\phi(\tilde{Y}_{\ell,n}$ is the $\ell^{th}$ row higher than $k_0$ which had no particles initially.
Given the above notation let $$Z_{\ell}(j):=\tilde{A}(j)\cap \tilde{Y}_{\ell,n}.$$
Define $$\mu_{\ell}(j):=\E(Z_{\ell}(j)).$$
\begin{lemma}\cite[Lemma 7]{lbg}\label{lemma:multitype} There exists a universal $J_1>0$ such that for all $k, \e\in(0,1),$ $n\ge N(k,\e)$ and all positive integers $j,\ell$  
$$\mu_{\ell}(j)<kn\left(J_1 \frac{j}{\ell} \frac{1}{\sqrt{\epsilon} kn}\right)^{\ell}.$$
\end{lemma}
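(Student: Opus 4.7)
The plan is to follow the multitype branching process strategy of \cite{lbg}*{Lemma 7}, adapted to the cylinder setting, by induction on $\ell$. The core idea is that the sites occupied at height $k_0 + \ell$ can only be occupied by particles that have traveled upward from the escape level $kn$ across a buffer of width $\sqrt{\epsilon}kn + \ell$, and such upward journeys are rare by standard one-dimensional random walk estimates together with the refined hitting bounds in Lemma \ref{he}. I would dominate the upward growth of $\tilde A(j)$ by a branching process in which a ``particle at height $m$'' has a controlled expected number of offspring at height $m+1$, and then iterate.

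For the base case $\ell = 1$, each of the $j$ particles $w_1,\ldots,w_j$ that escape the level $Y_{kn,n}$ does so at a uniformly distributed location on $Y_{kn,n}$ (by the same symmetry and reflection argument used to prove Lemma \ref{avmore}) and then performs simple random walk until aggregation. To land at a given site $v \in \tilde Y_{1,n}$ at height $k_0 + 1$, the particle must hit $v$ before returning to level $Y_{kn,n}$; by Lemma \ref{he}(ii) applied with $\Delta = k_0 + 1 - kn = \sqrt{\epsilon}kn + 1$, this probability is at most $J/\Delta = O\bigl(1/(\sqrt{\epsilon}kn)\bigr)$. Summing over $j$ particles and the $n$ sites in $\tilde Y_{1,n}$ gives $\mu_1(j) \le J_1 j/\sqrt{\epsilon}$, which is the desired bound.

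For the inductive step, suppose the bound for level $\ell$. I would argue that any site occupied in $\tilde Y_{\ell+1,n}$ corresponds to a particle whose trajectory, at some moment, first enters $\tilde Y_{\ell+1,n}$ from a site of $\tilde Y_{\ell,n}$. One therefore wants a recursion of the form $\mu_{\ell+1}(j) \lesssim \mu_\ell(j) \cdot \frac{J}{\sqrt{\epsilon}kn + \ell}$. The factor $1/(\sqrt{\epsilon}kn + \ell)$ comes from Lemma \ref{he}(i): a particle at an occupied site in $\tilde Y_{\ell,n}$ hits $\tilde Y_{\ell+1,n}$ before descending back into the bulk cluster at level $kn$ with probability at most $1/(\sqrt{\epsilon}kn + \ell + 1)$. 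Iterating this recursion gives $\mu_\ell(j) \le j \cdot \prod_{m=1}^{\ell} \frac{C}{\sqrt{\epsilon}kn + m}$. Using $\prod_{m=1}^{\ell}(\sqrt{\epsilon}kn + m)^{-1} \le \ell^{-\ell}\cdot (\sqrt{\epsilon}kn)^{-\ell} \cdot \ell!^{-1} \cdot e^{\ell}$ or the comparable crude estimate $\prod_{m=1}^{\ell}(\sqrt{\epsilon}kn)^{-1}\cdot (\ell/m)$ together with Stirling yields precisely the target form $kn \bigl(J_1 j / (\ell \sqrt{\epsilon} kn)\bigr)^\ell$, absorbing the Stirling constants and the $kn$ prefactor into $J_1$.

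The main obstacle is making the inductive recursion rigorous: a particle can cross level $\tilde Y_{m,n}$ several times before finally aggregating, so simply declaring ``one upward crossing yields at most $\mu_{m+1}/\mu_m$ descendants'' requires care. The cleanest way is to set up a multitype Galton–Watson process with types indexed by the height $m$ (and possibly by spatial location within a row), where the offspring mechanism sends each particle at height $m$ to a collection of sites in $\tilde Y_{m+1,n}$ according to the harmonic measure of the exit from $\tilde Y_{m,n}$ on the upward side. This Galton–Watson process stochastically dominates the true IDLA growth above $k_0$ because it ignores the possibility that a particle aggregates at some level before reaching height $m+1$. The cylinder's rotational symmetry in the $x$-coordinate and the fact that all initial configurations are unions of full rows ensure that the entering distribution into each level is uniform, so the bookkeeping is cleaner than in the planar case of \cite{lbg}.
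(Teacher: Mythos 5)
Your high-level plan (domination by a branching-type process, the $\sqrt{\e}kn$ buffer between the escape level $kn$ and $k_0$, and Lemma \ref{he} $ii.$ as the source of the factor $1/(\sqrt{\e}kn)$) matches the paper's, but the inductive step as you state it does not work. The recursion you propose, $\mu_{\ell+1}(j)\lesssim \mu_\ell(j)\cdot\frac{J}{\sqrt{\e}kn+\ell}$ at \emph{fixed} $j$, is false in general: once $j$ is large enough that two consecutive levels are essentially full it would force $n\lesssim n/(\sqrt{\e}kn)$, which fails for large $n$. It also cannot produce the target bound, whose dependence on $j$ is $(j/\ell)^{\ell}$ rather than linear; and the product manipulation you invoke, namely $\prod_{m=1}^{\ell}(\sqrt{\e}kn+m)^{-1}\le \ell^{-\ell}(\sqrt{\e}kn)^{-\ell}(\ell!)^{-1}e^{\ell}$, is already false for $\ell=3$ and large $n$. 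The gambler's-ruin probability you quote is also not the relevant one: a walker already at height $k_0+\ell$ reaches height $k_0+\ell+1$ before descending to level $kn$ with probability close to $1$, not $1/(\sqrt{\e}kn+\ell+1)$, and since a particle makes many excursions, per-excursion estimates do not directly control the growth anyway.

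The correct recursion --- the one the paper uses --- is in the particle index $m$ at fixed $\ell$: the $(m+1)$-st escaping particle can add a site at level $\tilde Y_{\ell,n}$ only if its \emph{first} visit to level $\tilde Y_{\ell-1,n}$ lands on an already occupied site (otherwise it aggregates there or lower), and by Lemma \ref{he} $ii.$ with $\Delta\ge\sqrt{\e}kn$ this event has probability at most $J\,|Z_{\ell-1}(m)|/(\sqrt{\e}kn)$. Hence $\mu_\ell(m+1)-\mu_\ell(m)\le J\mu_{\ell-1}(m)/(\sqrt{\e}kn)$. Summing over $m$ gives $\mu_\ell(j)\le\frac{J}{\sqrt{\e}kn}\sum_{m=0}^{j-1}\mu_{\ell-1}(m)$, and iterating this double recursion in $\ell$ yields $\mu_\ell(j)\le\bigl(J/(\sqrt{\e}kn)\bigr)^{\ell-1}j^\ell/\ell!$; the stated form then follows from $\ell!\ge\ell^\ell e^{-\ell}$. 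It is precisely the iterated sum over the particle index that produces the $1/\ell!$ (equivalently the $\ell^{-\ell}$), and this ingredient is missing from your argument.
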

We include the proof of Lemma \ref{lemma:multitype} for completeness. However first we show how it implies the upper bound in Theorem \ref{thm:IDLAcylinder}. 
Let us define the event $$F:=C_n\times {(1-\epsilon)kn}\subset A(kn^2).$$

Now let $B=B(k)>0$ be a constant to be specified later.
Then
\begin{eqnarray*}%\label{argument7}
\mathbb{P}\bigl(A(kn^2)\not\subset C_n\times[0,k(1+B\sqrt{\epsilon})n]\cap F\bigr)& \le & \mathcal \mathbb{P}(Z_{n' }(2k \epsilon n^2)>1)\\
&\le & \mu_{n'}(2k\epsilon n^2)
\end{eqnarray*} 
where $n'=(k B)\sqrt{\epsilon}n-1 -k\sqrt{\e} n.$
To see why these inequalities are true first note that the set $\tilde{Y}_{n',n}$ is at height less than $k(1+B\sqrt{\epsilon}n).$
Hence the cluster at time $kn^2$ should intersect $\tilde{Y}_{n',n}$ to grow beyond height $k(1+B\sqrt{\epsilon})n.$ However on the event $F$ at most $2\e k n^2$ particles out of the first $kn^2$ move beyond height $kn.$ Hence the size of the intersection of $\tilde{Y}_{n',n}$ and the cluster is at most $Z_{n' }(2k \epsilon n^2).$ Thus we get the first inequality. The  second inequality follows trivially from the fact that for a non-negative integer-valued random variable the expectation is at least as big as the probability of the random variable being positive. 
%The first inequality follows by observing that for the cluster to go beyond height $k(1+B\sqrt{\epsilon)})n$ there has to
Using Lemma \ref{lemma:multitype} we get 
\begin{eqnarray*}
\mu_{n'}(2k\epsilon n^2) &\le & kn\left(J_1 \frac{2k\e n^2}{n'} \frac{1}{\sqrt{\epsilon} kn}\right)^{n'}\\
&=& kn\left(J_1 \frac{4k\epsilon n^2}{k(B-1)\sqrt{\epsilon} n} \frac{1}{\sqrt{\epsilon} kn}\right)^{k(B-1)\sqrt{\epsilon} n}\\
& = & kn\left(\frac{4J_1}{(B-1)k} \right)^{k(B-1)\sqrt{\epsilon} n}.
\end{eqnarray*}
Thus
$$\mathbb{P}\bigl(A_{kn^2}\notin [0,n]\times [0,k(1+B\sqrt{\epsilon})n]\cap F\bigr)\le kn\left(\frac{4 J_1}{(B-1)k}\right)^{(B-1)\sqrt{\epsilon}kn}.$$
Now by choosing $B$ such that $4J_1 <(B-1)k$ we are done.
\hspace{.3in}
\qed\\\\
\textbf{Proof of Lemma \ref{lemma:multitype}}.
The rate at which $\tilde{Y}_{\ell,n}$ grows is at most the rate at which a particle exiting height $kn$ reaches the occupied sites in $\tilde{Y}_{\ell-1,n}$.
Thus if $X(t)$ is the random walk on $C_n$ defined in \eqref{infinitecylinder} then for any $m$
\begin{eqnarray}\label{keyrestriction}
\mu_{\ell}(m+1)-\mu_{\ell}(m)&\le & \sup_{y\in Y_{kn,n}} \mathbb{P}_{y}\Bigl[X(\tau_{\tilde{Y}_{\ell-1,n}})\in \tilde{A}(m)\Bigr]\\
&\le & J \frac{\mu_{\ell-1}(m)}{kn\sqrt{\e}}
\end{eqnarray}
where the second inequality follows by Lemma \ref{he} $ii$.
Summing over $m=0,1\ldots j$ we get 
$$\mu_{\ell}(j)\le \frac{J}{kn\sqrt{\e}}\sum_{m=0}^{j-1} \mu_{\ell-1}(m).$$
Iterating the above relation in $\ell$ with fixed $j$ gives us 
$$\mu_{\ell}(j)\le 
\Bigl( J\frac{1}{\sqrt{\e} {kn}}\Bigr)^{\ell-1} \frac{j^{\ell}}{\ell!}.$$
The lemma follows by using the inequality $$\ell!\ge \ell^\ell e^{-\ell}.$$
\qed 
%\\

\section{Green's function and flows} \label{app2}
We prove Lemma \ref{heightgreen1}. 
We start by discussing some properties 
of the ordinary random walk on $\Cyl_n$ (defined in \eqref{graphrep}).
% \\
For any $v \in \Cyl_n$ define 
\begin{equation}\label{gf}
G_{n}(v)=\frac{1}{4n}
\E_{w} \bigl[\# \mbox{ visits to the line } y=0 \mbox{ before } \tau(C_{n} \times \{1\})   \bigr].
\end{equation}
\begin{lemma}\label{linear}  For any point $(x,y)\in \Cyl_n$ 
$$G_{n}(x,y)=1-y.$$
\end{lemma}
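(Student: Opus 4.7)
The plan is to guess the answer and verify it using first-step analysis, exploiting the rotational symmetry of $\Cyl_n$ in the $x$-coordinate. Let
\[ u(v) := 4n \cdot G_n(v) = \E_v\bigl[\#\{s < \tau(C_n \times \{1\}) : X_s \in C_n \times \{0\}\}\bigr], \]
so that it suffices to show $u(x,y)=4n(1-y)$.

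First I would note that since the simple random walk on $\Cyl_n$ is invariant under rotations of the $C_n$ factor, the function $u$ depends only on $y$; write $u(x,y)=f(y)$. Next, by the Markov property applied at the first step, $f$ satisfies the system
\begin{align*}
f(1) &= 0, \\
f(y) &= \tfrac{1}{4}\bigl[2 f(y) + f(y+\tfrac{1}{n}) + f(y-\tfrac{1}{n})\bigr] && \text{for } 0<y<1, \\
f(0) &= 1 + \tfrac{1}{4}\bigl[3 f(0) + f(\tfrac{1}{n})\bigr],
\end{align*}
where the middle equation uses that a vertex with $0<y<1$ has two horizontal neighbors (both with the same $y$) and two vertical ones; the third equation uses that a vertex on $C_n\times\{0\}$ contributes one visit at time $0$, has two horizontal neighbors and the up-neighbor at height $1/n$, and carries a self-loop contributing another $f(0)$ term.

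I would then verify directly that the candidate $f(y)=4n(1-y)$ satisfies all three conditions: the boundary value $f(1)=0$ is immediate; harmonicity on the interior is immediate because $1-y$ is affine in $y$ and the Laplacian of an affine function on a grid vanishes; and at $y=0$ one computes
\[ 1 + \tfrac{1}{4}\bigl[3\cdot 4n + 4n(1-\tfrac{1}{n})\bigr] = 1 + \tfrac{1}{4}(16n-4) = 4n = f(0), \]
so the self-loop correction at the bottom is exactly what forces the normalizing constant $4n$. Standard uniqueness for the Dirichlet-type problem (the operator $I-P$ restricted to the complement of $C_n\times\{1\}$, with $P$ the walk's transition kernel, is invertible because the walk almost surely reaches $C_n\times\{1\}$ in finite time) then gives $f(y)=4n(1-y)$, hence $G_n(x,y)=1-y$.

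The only subtle point is bookkeeping at the boundary $y=0$: one must remember both that the starting site contributes a visit (the ``$1+$'') and that the self-loop contributes an extra $f(0)$ term in the average over neighbors. These two effects combine cleanly to single out the constant $4n$, which is why the result comes out as the tidy expression $1-y$ with no lower-order correction.
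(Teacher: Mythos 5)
Your proof is correct and takes essentially the same route as the paper: both reduce by the rotational symmetry in $x$ to a one-dimensional (lazy) computation in the height coordinate, with the self-loop at $y=0$ supplying the boundary equation that pins down the constant $4n$. The paper merely asserts the resulting 1D quantity "is easy to compute"; your verification of the affine candidate plus the standard uniqueness argument fills in exactly that computation.
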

\begin{proof}
Consider the lazy symmetric random walk on the interval $[0,n]$ 
where at $0$ the chance that it moves to $1$ is $\frac{1}{4}$ 
and everywhere else the chance that it jumps is $\frac{1}{2}.$
By symmetry of $\Cyl_n$ in the $x$-coordinate 
it is clear that for all $(x,y)\in \Cyl_n,$ $4n G_{n}(x,y)$ 
is the expected number of times 
that the above one-dimensional random walk starting from $ny$
hits $0$ before hitting $n$. 
The above quantity is easy to compute and is $4n(1-y).$ 
%For more details see \cite{lpw}.
\end{proof}
\begin{remark}
Thus for any $\sigma\in \Omega\cup \Omega'_{n}$ 
\begin{equation}\label{wf1}
h(\sigma)=\sum_{(x,y)\in B_1}G_n(x,y)
\end{equation}
where $h(\cdot)$ is defined in \eqref{wf}.
\end{remark}
We now define the stopped Green's function. For any $A\subset \Cyl_n$ and $v\in \Cyl_n$ define
\begin{equation}\label{gfa}
G_{A}(v)=
\frac{1}{4n}\E_{v} \bigl[\# \mbox{ visits to the line } y=0 
\mbox{ before } \tau(A^c)   \bigr].
\end{equation}

\begin{lemma}\label{stopgre} 
Given $A \subset \Cyl_n$ such that $A\cap (C_n \times \{1\})=\emptyset,$ 
for all $(x,y)$ in $\Cyl_n$ we have 
$${G}_{A}(x,y)={H}_{A}(x,y)-y$$
where  $H_{A}(\cdot)$ was defined in \eqref{stoppedheightgeneral}. 
\end{lemma}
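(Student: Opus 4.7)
The plan is to show that the function $f(v) := H_A(v) - y - G_A(v)$, where $v = (x,y)$, vanishes identically on $\Cyl_n$, by verifying that $f|_{A^c} \equiv 0$ and that $f$ is harmonic on $A$ with respect to simple random walk, then invoking uniqueness of the Dirichlet problem on the finite graph. The boundary condition is immediate: for $v \in A^c$ the walker has $\tau(A^c) = 0$, so by the defining formulas \eqref{stoppedheightgeneral} and \eqref{gfa} we have $H_A(v) = y$ and $G_A(v) = 0$, giving $f \equiv 0$ on $A^c$.

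For the harmonicity on $A$, I would combine three first-step identities. Let $\Delta u(v) := u(v) - \frac14 \sum_{w \sim v} u(w)$ denote the Laplacian on the $4$-regular graph $\Cyl_n$. By the Markov property applied at the first step, $\Delta H_A = 0$ on $A$. The standard Green's function identity (first-step decomposition, collecting the indicator of a visit at time $0$) gives
\[ \Delta G_A \;=\; \frac{1}{4n}\,\mathbf{1}_{\{y=0\}} \qquad \text{on } A. \]
The delicate calculation is for the coordinate function $y$: at an interior vertex $(x,y)$ with $0 < y < 1$, the four neighbors contribute $y$-coordinates $y, y, y-\tfrac1n, y+\tfrac1n$, so $\Delta y = 0$. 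At a base vertex $(x,0)$, the self-loop takes the place of the absent neighbor below, shifting the local average of $y$-values up to $\tfrac1{4n}$, so $\Delta y(x,0) = -\tfrac{1}{4n}$. The hypothesis $A \cap (C_n \times \{1\}) = \emptyset$ is exactly what rules out the symmetric anomaly $\Delta y = +\tfrac{1}{4n}$ at the top. Hence $\Delta y = -\tfrac{1}{4n}\mathbf{1}_{\{y=0\}}$ on $A$, and adding the three identities yields $\Delta f \equiv 0$ on $A$.

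Since $\Cyl_n$ is finite and connected, the random walk started at any $v \in A$ almost surely exits $A$, so a function harmonic on $A$ with prescribed boundary values on $A^c$ is uniquely determined by those boundary values. Combined with $f|_{A^c} \equiv 0$ this forces $f \equiv 0$ everywhere, which is the claim. The only step that requires any care is the bookkeeping at $y=0$: the self-loop contribution to $\Delta y$ is calibrated to be precisely $\tfrac{1}{4n}$, matching the point source in $\Delta G_A$, which is exactly why the correction term in the statement is the simple affine function $y$ rather than something more elaborate.
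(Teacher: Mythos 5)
Your proof is correct, and the key computation is the same one the paper relies on: the identity $\Delta y = -\tfrac{1}{4n}\mathbf{1}_{\{y=0\}}$ on $A$, which is possible exactly because the self-loop at $y=0$ is inside $A$ but the one at $y=1$ is not (by hypothesis). Where you diverge from the paper is in the packaging. The paper works directly along the random walk trajectory: it writes $y_{\tau(A^c)}-y_0$ as the telescoping sum $\sum_{t\geq 0}(y_{t+1}-y_t)\mathbf{1}(\tau(A^c)>t)$, notes that the conditional increment is $\tfrac{1}{4n}\mathbf{1}(y_t=0)\mathbf{1}(\tau(A^c)>t)$, and takes expectations to produce $H_A-y = G_A$ in one line. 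In effect this is an optional-stopping argument on the Doob decomposition of $y_t$. You instead recast everything statically: you verify $\Delta H_A = 0$, $\Delta G_A = \tfrac{1}{4n}\mathbf{1}_{\{y=0\}}$, and $\Delta y = -\tfrac{1}{4n}\mathbf{1}_{\{y=0\}}$ on $A$, check the boundary condition $f\equiv 0$ on $A^c$ (which is nonempty since $C_n\times\{1\}\subset A^c$), and invoke uniqueness for the discrete Dirichlet problem on a finite graph. Both proofs are sound; the paper's is marginally shorter and self-contained, while yours makes the role of the hypothesis $A\cap(C_n\times\{1\})=\emptyset$ more transparent (it kills the ``wrong-sign'' boundary anomaly $\Delta y = +\tfrac{1}{4n}$ at the top) and fits more naturally with the Laplacian machinery developed later in Appendix \ref{app2}, at the cost of needing to cite Dirichlet uniqueness as an extra ingredient.
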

\begin{proof} Let $y_{t}$ be the height of the walk at time $t\le {\tau}(A^c)$.
Consider the following telescopic series:
\begin{equation}
\label{telescope}
y_{\tau(A^c)}-y_{0}=
\sum_{t=0}^{\infty}(y_{t+1}-y_{t})\mathbf{1}(\tau(A^c)>t).
\end{equation}
Notice that since $A\cap (C_n \times \{1\}) =\emptyset$, 
$t<\tau_{A^c}$ implies $y_t<1$.
We make the following simple observation:
\begin{eqnarray}\label{laplc}
\E\bigl[(y_{t+1}-y_{t})\mathbf{1}
({\tau}(A^c)>t)|\mathcal{F}_t \bigr] & = & \left\{
\begin{array}{ccc}
 \frac{1}{4n} \mathbf{1}({\tau}({A^c})>t) &\mbox{}  y_t=0  \\
 0           &\mbox{}  y_t>0  
\end{array}
\right. 
%& = & \frac{1}{4n} \mathbf{1}({y_t=0})\mathbf{1}(\tilde{\tau}_1>t)
\end{eqnarray}
%\textcolor{red}{on second thought the second line seems unnecessary.}
where $\mathcal{F}_t$ is the filtration 
generated by the random walk up to time $t$.
Taking expectations on both sides of \eqref{telescope}, we get
$$\E_{(x,y)}[y_{{\tau}({A^c})}]-y
=\frac{1}{4n}\E_{(x,y)}\sum_{t=0}^{\infty}
\mathbf{1}({y_t=0})\mathbf{1}({\tau}({A^c})>t)=G_{A}(x,y)$$ 
and hence we are done.
%as defined in \eqref{gf2}.
\end{proof}
\begin{remark}\label{stopped height more}
Note that the above lemma implies 
for any $(x,y)\,\in\,\Cyl_n$, 
\begin{equation*}\label{biggerheight}
\E_{(x,y)}[y_{{\tau}({A^c})}]\ge y
\end{equation*}
since the Green's function is a non-negative quantity.
\end{remark}

Next we relate the Green's function to the solution of a variational problem. The results are well known and classical even though our setup is slightly different. Hence we choose to include the proofs for clarity. As defined in subsection \ref{secef} let $\vec{E}$ denote the set of directed edges of $\Cyl_n.$
% \\

% \noindent
For any function $F: \Cyl_n \rightarrow \mathbb{R}$
define the gradient $\nabla F : \vec{E} \rightarrow \mathbb{R}$ by
$$\nabla F (v,w) = F(w)-F(v)$$
and the discrete Laplacian $\Delta F : \Cyl_n \rightarrow \mathbb{R}$ by
\begin{equation}\label{lapldef}
\Delta F (v)=F(v) - \frac{1}{4}\sum_{w\sim v}F(w).
\end{equation}
Note that the graph $\Cyl_n$ is 4-regular.
% \\

% \noindent
Recall the definition of energy from subsection \ref{secef}. The next result is a standard summation-by-parts formula.
\begin{lemma}\label{byparts}For any function $F: \Cyl_n\rightarrow \mathbb{R}$
$$\mathcal{E}(\nabla F)=4\sum_{v\in \Cyl_n}F(v)\Delta F(v).$$
\end{lemma}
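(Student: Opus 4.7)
The plan is to prove this by expanding the square $(F(w)-F(v))^2 = F(v)^2 - 2F(v)F(w) + F(w)^2$ inside the definition of $\mathcal{E}(\nabla F)$ and then rearranging the sum into a form that exposes the Laplacian. The whole statement is purely algebraic, so no probabilistic input is needed.

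Concretely, I will first rewrite the sum $\sum_{(v,w)\in \vec E}$ as the iterated sum $\sum_{v\in \Cyl_n}\sum_{w\in N(v)}$, where $N(v)$ denotes the multiset of neighbors of $v$ (with self-loops at $y\in\{0,1\}$ counted once each, so $|N(v)|=4$ for every $v$ by $4$-regularity). Writing $\mathcal{E}(\nabla F)=\tfrac12\sum_v\sum_{w\in N(v)}(F(w)-F(v))^2$ and expanding the square gives three sums. For the $F(v)^2$ term we use $|N(v)|=4$ to get $\tfrac12\cdot 4\sum_v F(v)^2$. For the $F(w)^2$ term we use the symmetry of the adjacency relation: summing $F(w)^2$ over all directed edges $(v,w)$ equals $\sum_w F(w)^2\,|N(w)|=4\sum_w F(w)^2$. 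The cross term gives $-\sum_v F(v)\sum_{w\in N(v)}F(w)$. Collecting these yields
\[
\mathcal{E}(\nabla F) \;=\; 4\sum_v F(v)^2 \;-\; \sum_v F(v)\sum_{w\in N(v)} F(w) \;=\; 4\sum_v F(v)\Bigl[F(v)-\tfrac14\sum_{w\in N(v)}F(w)\Bigr],
\]
which is exactly $4\sum_v F(v)\,\Delta F(v)$ by the definition of $\Delta$ in \eqref{lapldef}.

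The only subtlety is the bookkeeping of self-loops at the rows $C_n\times\{0\}$ and $C_n\times\{1\}$. I would address this in one short remark: a self-loop at $v$ is a single directed edge $(v,v)$ in $\vec E$ and it contributes $(F(v)-F(v))^2=0$ to the energy; on the other hand it contributes one copy of $F(v)$ to $\sum_{w\in N(v)}F(w)$ and one unit to $|N(v)|$, matching the factor of $F(v)$ that appears in the Laplacian from the $F(v)$ diagonal term. Because the missing energy contribution is precisely $0$ and the Laplacian formula is built around the convention that $|N(v)|=4$ everywhere, both sides of the identity handle self-loops consistently, so no correction is needed. This is the only place where one could imagine a sign or factor error, and I do not expect any other obstacle.
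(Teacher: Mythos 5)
Your proof is correct and is essentially the paper's proof: the paper disposes of this lemma with the one-line remark that it ``follows by definition and expanding the terms,'' and your expansion of $(F(w)-F(v))^2$, the use of $4$-regularity for the diagonal and in-degree counts, and the check that self-loops contribute $0$ to the energy while being counted consistently in $\Delta F$ are exactly the details being elided.
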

The proof follows by definition and expanding the terms.
% \\

For a subset $A \subset \Cyl_n$ recalling the definition of stopped Green's function let
\begin{equation}\label{generalflow}
f_A:=\nabla G_{A}.
\end{equation} 

Also recall the definition of divergence \eqref{divdef}.
\begin{lemma}\label{optflow1}For any $(x,y)\in A$
\begin{eqnarray*}
\div\bigl(f_A) (x,y)=\left\{\begin{array}{cl}
\frac{1}{n} & \mbox{if $y=0$,} \\
0 & \mbox{otherwise}. 
\end{array}
\right.
\end{eqnarray*}
\end{lemma}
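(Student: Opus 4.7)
The plan is to compute $\div(f_A)(v) = \div(\nabla G_A)(v)$ by recognizing it as a multiple of the discrete Laplacian $\Delta G_A$ and then evaluating that Laplacian by a first-step analysis on the underlying random walk.

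First I would record the identity $\div(\nabla F)(v) = 4\Delta F(v)$, valid for any function $F$ on the $4$-regular graph $\Cyl_n$. This follows directly from the definitions: a self-loop at $v$ (when $v \in C_n \times \{0,1\}$) contributes $\nabla F(v,v) = 0$ to the divergence while adding $F(v)$ to $\sum_{w \sim v} F(w)$, and every ordinary edge $\{v,w\}$ contributes $F(v) - F(w)$, giving
\[
\div(\nabla F)(v) = \sum_{w \sim v}(F(v) - F(w)) = 4F(v) - \sum_{w \sim v} F(w) = 4\Delta F(v),
\]
where the sum is taken over all four edges incident to $v$ (with the self-loop counted once) in accordance with \eqref{lapldef}.

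Next I would carry out a first-step analysis to show that for every $v \in A$,
\[
\Delta G_A(v) = \tfrac{1}{4n}\,\mathbf{1}\{v \in C_n \times \{0\}\}.
\]
Setting $N = \#\{0 \le t < \tau(A^c) : X_t \in C_n \times \{0\}\}$ so that $G_A(v) = \tfrac{1}{4n}\E_v[N]$, I would split $N$ into the indicator of a visit at time $0$ plus the visits from time $\ge 1$. The strong Markov property at time $1$, together with the fact that the walk moves from $v$ to each of its four neighbors with probability $1/4$, yields for $v \in A$
\[
\E_v[N] = \mathbf{1}\{v \in C_n \times \{0\}\} + \tfrac{1}{4} \sum_{w \sim v} \E_w[N].
\]
Neighbors $w \in A^c$ present no issue because $\tau(A^c) = 0$ starting from such $w$, so $\E_w[N] = 0 = 4n\, G_A(w)$. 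Dividing by $4n$ and rearranging yields the displayed identity for $\Delta G_A$.

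Combining the two steps, for any $v = (x,y) \in A$ we obtain $\div(f_A)(v) = 4\Delta G_A(v) = \tfrac{1}{n}\,\mathbf{1}\{y=0\}$, which is exactly the statement of the lemma. There is no serious obstacle here; the only bookkeeping point worth keeping straight is the role of the self-loops at $C_n \times \{0,1\}$, which make $\Cyl_n$ genuinely $4$-regular and, by convention, carry zero flow in $\nabla G_A$, so they contribute neither to $\div(f_A)$ nor as a defect in the one-step Markov identity.
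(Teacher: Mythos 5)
Your proposal is correct and follows the paper's proof essentially verbatim: the paper also asserts $\div(f_A)(v) = 4\Delta G_A(v)$ ``by definition'' and then evaluates $4G_A(v) - \sum_{w\sim v}G_A(w)$ by first-step analysis of the walk defining $G_A$. You've simply filled in the two small details the paper leaves implicit (the self-loop bookkeeping behind $\div(\nabla F) = 4\Delta F$ on the $4$-regular graph, and the fact that $G_A$ vanishes on $A^c$ so the one-step identity closes).
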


\begin{proof}
 For any $v=(x,y)\in A$ by definition
\begin{equation}\label{energdiv}
\div(f_A)(v)=4\Delta G_{A}(v)=4G_{A}(v)-\sum_{w \sim v}G_{A}(w)=
\left\{\begin{array}{cl}
\frac{1}{n} & \mbox{if $y=0$,} \\
0 & \mbox{otherwise}. 
\end{array}
\right.
\end{equation} 
The last equality follows by the definition of $G_{A}$ in \eqref{gfa} by looking at the first step of the random walk started from $v.$
\end{proof}
We now prove that the random walk flow $f_{A}$ on a set $A$ is the flow with minimal energy.
\begin{lemma}\label{optflow2}
$$\mathcal{E}(f_A)=\inf_{f}\mathcal{E}(f)$$
where the infimum is taken over all flows 
from $\left (C_n \times \{0\}\right) \bigcap A$ to $A^c$
such that for $(x,y)\in A$
\begin{eqnarray*}
\div(f)(x,y)=\left\{\begin{array}{cl}
\frac{1}{n} & \mbox{if $y=0$,} \\
0 & \mbox{otherwise.} 
\end{array}
\right .
\end{eqnarray*}

\end{lemma}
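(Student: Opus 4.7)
\textbf{Proof proposal for Lemma \ref{optflow2}.}
The plan is to apply Thomson's principle in the standard way: decompose any competing flow as $f = f_A + g$ with $g$ divergence-free on $A$, expand the energy, and show that the cross term vanishes because $G_A$ is supported on $A$. First I would verify that $f_A = \nabla G_A$ is itself a legitimate competing flow. Antisymmetry is immediate, the divergence conditions on $A$ are exactly Lemma \ref{optflow1}, and on $A^c$ one has $\div f_A(v) = -\sum_{w \sim v} G_A(w) \le 0$ (since $G_A \ge 0$ and $G_A(v)=0$ for $v\in A^c$), so $f_A$ really is a flow from $A \cap (C_n \times \{0\})$ to $A^c$.

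Next I would set up the summation-by-parts identity that drives the proof. For any function $F : \Cyl_n \to \R$ and any antisymmetric flow $f$, a short computation using antisymmetry (renaming $v \leftrightarrow w$ in one of the two sums) gives
\begin{equation*}
\tfrac{1}{2} \sum_{(v,w) \in \vec{E}} (\nabla F)(v,w)\, f(v,w) \;=\; \sum_{v \in \Cyl_n} F(v)\, \div f(v).
\end{equation*}
This is the flow analogue of Lemma \ref{byparts}, and it is the only nontrivial identity needed.

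Now let $f$ be any flow satisfying the hypotheses of the lemma, and set $g := f - f_A$. Since $f$ and $f_A$ have identical divergences on $A$ (both equal $\frac{1}{n}$ on $(C_n\times\{0\})\cap A$ and $0$ elsewhere in $A$), the difference satisfies $\div g(v) = 0$ for every $v \in A$. Expanding the (quadratic) energy gives
\begin{equation*}
\mathcal{E}(f) \;=\; \mathcal{E}(f_A) \;+\; \sum_{(v,w)\in\vec{E}} f_A(v,w)\, g(v,w) \;+\; \mathcal{E}(g).
\end{equation*}
Applying the identity above to the middle term with $F = G_A$ rewrites it as $2\sum_{v} G_A(v)\,\div g(v)$. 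The key point is that $G_A$ vanishes on $A^c$ (if the walk starts outside $A$ it never visits the line $y=0$ before exiting $A$), while $\div g$ vanishes on $A$; hence every term in the sum is zero. We conclude
\begin{equation*}
\mathcal{E}(f) \;=\; \mathcal{E}(f_A) + \mathcal{E}(g) \;\ge\; \mathcal{E}(f_A),
\end{equation*}
with equality when $g \equiv 0$, i.e.\ when $f = f_A$.

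The only step requiring care is the cross-term cancellation, and in particular the observation that $G_A \equiv 0$ on $A^c$ so that one does not need any information about $\div g$ on $A^c$ (which is left unspecified by the hypotheses, beyond the overall conservation $\sum_v \div g = 0$). Everything else is bookkeeping with antisymmetric flows, and no new ingredient beyond the definitions is needed.
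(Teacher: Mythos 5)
Your proof is correct, and it is a genuinely different (though equally standard) route from the paper's. The paper sketches the cycle-law argument: observe that $f_A = \nabla G_A$ satisfies the cycle law because the sum of a gradient around a cycle telescopes to zero, then invoke that (i) any energy minimizer under the given divergence constraints must satisfy the cycle law and (ii) there is a unique flow satisfying both the divergence conditions and the cycle law. Your argument instead is the direct quadratic decomposition: write any competitor as $f = f_A + g$ with $\div g = 0$ on $A$, expand $\mathcal{E}(f) = \mathcal{E}(f_A) + 2\sum_v G_A(v)\div g(v) + \mathcal{E}(g)$, and observe that the cross term vanishes because $G_A$ is supported on $A$ while $\div g$ is supported on $A^c$. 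This second route is more self-contained --- it needs no variational characterization of the minimizer, no uniqueness claim, and gives the stronger Pythagorean identity $\mathcal{E}(f) = \mathcal{E}(f_A) + \mathcal{E}(f - f_A)$ for free --- while the cycle-law route ties the statement into the broader Kirchhoff-law framework the paper is already citing from \cite{lpw}. Your observation that $G_A \equiv 0$ on $A^c$ (so the cross term dies without needing any control of $\div g$ off $A$) is precisely the point that lets the decomposition close cleanly, and you are right to flag it as the one step that requires care.
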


\begin{proof}
The proof follows by standard arguments, see \cite[Theorem 9.10]{lpw}. 
We sketch the main steps. 
One begins by observing that the flow $f_{A}$ satisfies the cycle law, 
i.e.\ the sum of the flow along any cycle is $0$. 
To see this notice that for any cycle $$x_1,x_2,\ldots x_k=x_1$$ where $x_{i}'s\in \Cyl_n,$
$$\sum_{i=1}^{k-1}f_{A}(x_{i},x_{i+1})=\sum_{i=1}^{k-1}(G_{A}(x_{i+1})-G_{A}(x_{i}))=0.$$
The proof is then completed by first showing that the flow with the minimum energy must satisfy the cycle law, followed by showing that there is an unique flow satisfying the given divergence conditions and the cycle law.

\end{proof}

Now suppose $A\subset \Cyl_{n}\setminus \left (C_{n}\times \{1\}\right)$. Then
\begin{equation}\label{argument3}
\mathcal{E}({f}_{A}) = \mathcal{E}(\nabla {G}_{A})
=\sum_{v \in A}{G}_{A}(v)4\Delta {G}_{A}(v)=\frac{1}{n}\sum_{k \in C_n}{G}_{A}(k,0)=\frac{1}{n}\sum_{k \in C_n}H_{A}(k,0).
\end{equation}
The first equality is by definition. The second equality follow from Lemma \ref{byparts} and the fact that $G_{A}$ is $0$ outside $A$. The third equality is by \eqref{energdiv}. The last equality is by Lemma \ref{stopgre} since by hypothesis $A \cap (C_{n}\times \{1\})=\emptyset$. \\

\textit{Proof of Lemma \ref{heightgreen1}}. The proof now follows from \eqref{argument3} and Lemma \ref{optflow2}.
 \qed
%\appendix
\section{ Proof of Lemma \ref{lemm:azuma1}}\label{newapp}
%let $\omega'=\omega(1)$. If $w' \in B$ we are done. Otherwise by hypothesis $$x:=g(\omega')\ge A_1-a_2-A_2.$$
We first prove $i.$ 
Looking at the process $\omega(t)$ started from $\omega(0)=\omega$ 
we see by \eqref{submart1} that 
the process $$Z_t=X_{t\wedge\tau(B)}-c[t\wedge\tau(B)]$$ with $X_0=g(\omega)$
is a submartingale with respect to the filtration $\mathcal{F}_t$.
Also by hypothesis $|Z_{t+1}-Z_{t}|\le 2A_2.$ 

Now by the standard Azuma-Hoeffding inequality for submartingales, 
for any time $t>0$ such that $a_2-a_1t<0$ we have
\begin{equation}\label{azumabound}
\mathbb{P}\left(Z_{t}-Z_0\le -(a_1t-a_2)\right)<e^{-\frac{{(a_1t-a_2)}^2}{4A_2^2t}}.
\end{equation}
Let $T$ be as in the hypothesis of the lemma. We observe that the event $\{X_{0} \ge A_1-a_2\} \cap\{ \tau(B)>T\}$ implies that
$$Z_{T}-Z_0\le -(a_1T-a_2).$$
%To see this note that by definition
%$$Z_{\frac{2 n^2}{a}}-Z_0=h(\sigma_{2 n^2/{a}})-a [2n^2/{a}\wedge \tau']-h(0),$$
This is because by hypothesis $Z_0=X_0\ge A_1-a_2$. Hence  on the event $\tau(B)>T$ 
$$Z_{T}-Z_0=X_T-a_1 T-X_0< a_2-a_1T$$ since $X_T\le A_1$ by \eqref{hyp111}.  
Thus by \eqref{azumabound} 
$$\mathbb{P}(\tau(B)>T\mid X_{0}>A_1-a_2)\le e^{-\frac{{(a_1T-a_2)}^2}{4A_2^2T}}.$$

To prove $ii.$ let $\omega_0=\omega(\tau(B^c)).$ By hypothesis $$x:=f(\omega_0)\ge A_1-a_4-A_2.$$
since by \eqref{hyp222} the process cannot jump by more than $A_2.$
Clearly it suffices to show  $$\mathbb{P}_{\omega_0}(\tau(B') \ge T') \ge 1-e^{-\frac{a_4^2 }{32A_2^2T}}+e^{-\frac{a_1^2 T}{32A_2^2}}.$$
Now  consider the submartingale $$W_t=X_{t\wedge\tau'\wedge \tau''}-a_1[{t\wedge\tau'\wedge \tau''}]$$
with $W_0=x$, where $\tau'=\tau(B)$ and $\tau''=\tau(B')$.  
%Since $x \in [A-a-C,A-a]$.  
We first claim that
\begin{equation}\label{firststep1} 
\mathbb{P}_{\omega_0}(\tau'\wedge \tau''>T)<e^{-\frac{c^2 T^2}{16C^2T}}.
\end{equation}
\noindent
To see this
%
%$Z_t=X_{t\wedge\tau'}-c[{t\wedge\tau'}]$ is a submartingale with $Z_0=x.$
notice that by the Azuma-Hoeffding inequality it follows that 
\begin{equation}\label{azuma1}
\mathbb{P}(W_{T}-W_{0}< -a_1 T/2 )<e^{-\frac{a_1^2 T^2}{16A_2^2T}}.
\end{equation} 
On the other hand the event $\tau'\wedge \tau''> T$ implies 
\begin{align*}
W_{T}& \le  A_1 -a_4-a_1T\\
W_{0} & \ge  A_1-a_4-A_2.
\end{align*}
%The first inequality is true because on the event $\tau'\wedge \tau''\ge n^2$ we have $W_{n^2}=X_{n^2}-cn^2$ and the second inequality is by hypothesis since $x\ge An^2-C.$
Thus the event $\tau'\wedge \tau''> T$ implies $$W_{T}-W_{0}< -\frac{a_1 T}{2},$$ since by  hypothesis  $T> \frac{2A_2}{a_1}$.
\eqref{firststep1} now follows from \eqref{azuma1}.\\
 
Now on the event $\{\tau'\wedge \tau'' \le T\}\bigcap \{\tau''< \tau'\}$ ,
\begin{eqnarray*}
W_{T}& < & A_1-2a_4\\
W_{0}& \ge & A_1-a_4-A_2.
\end{eqnarray*}
Hence  $$\{\tau'\wedge \tau''  \le T\}\cap \{\tau''< \tau'\}\implies W_{T}-W_{0}\le -\frac{a_4}{2}$$ since by hypothesis $a_4> 2A_2$.
Thus by \eqref{azuma1}  we have 
$$\mathbb{P}\left(\{\tau'\wedge \tau''  \le T\} \bigcap \{\tau''\le \tau'\} \right)\le e^{-\frac{a_4^2 }{16A_2^2T}}.$$
Observe that $$\mathbb{P}(\tau'\le \tau'')\ge \mathbb{P}(\tau'\wedge \tau'' \le T)-\mathbb{P}\left(\{\tau'\wedge \tau''<T\} \bigcap \{\tau''\le \tau'\} \right).$$
This along with \eqref{firststep1} imply that
% \tau''\ge n^2)\ge 1-\mathbb{P}(\tau'\ge n^2)-\mathbb{P}(\tau''\le n^2)
 $\tau''$ stochastically dominates a geometric variable with success probability at most $$e^{-\frac{a_4^2 }{16A_2^2T}}+e^{-\frac{a_1^2 T}{16A_2^2}}.$$ 
 Thus we are done.
\qed
%\end{appendix}
\section*{Future directions and related models.}\label{concl1}

\paragraph{Fluctuations.}
This article establishes that competitive erosion on the cylinder forms a macroscopic interface quickly. A natural next step is to find the order of magnitude of its fluctuations. Theorem \ref{mainresult} only shows that the fluctuations are $o(n)$.

%We now discuss some related models.
\paragraph{Randomly evolving interfaces.}
\begin{figure}
\centering
\begin{tabular}{ccc}
\includegraphics[scale=.5]{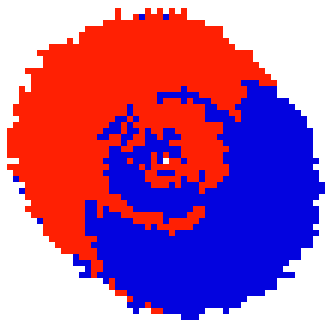} ~ & ~
\includegraphics[scale=.5]{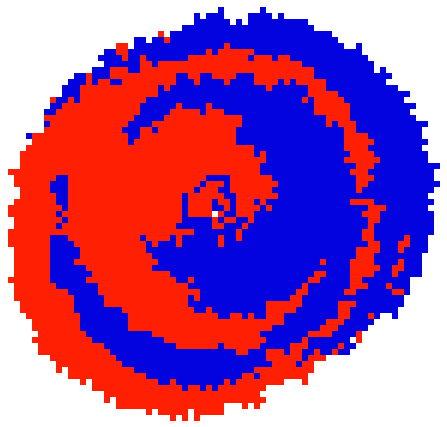} ~ & ~
\includegraphics[scale=.5]{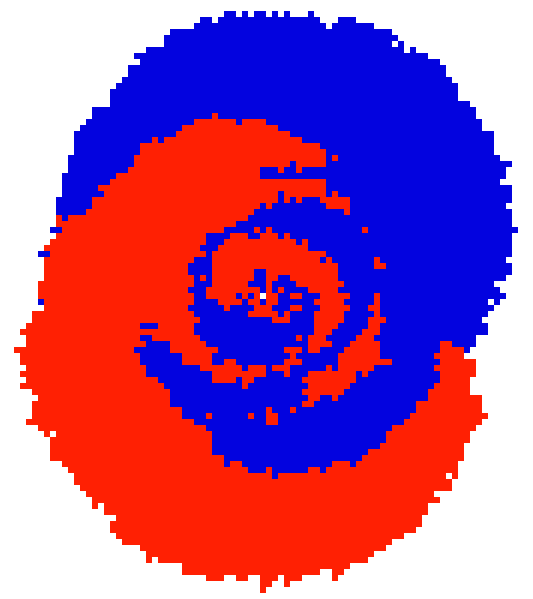} \\
$10^8$ & $10^9$ & $10^{10}$
\end{tabular}
\caption{A variant of competitive erosion on $\Z^2$ with $\mu_1 = \mu_2 = \delta_0$. 
From left to right, the set of all converted sites after $n= 10^8, 10^9, 10^{10}$ particles have been released. Most red particles convert blue sites and vice versa, so that a relatively small number of sites are converted.}
\label{f.spiral}
\end{figure}

Competitive erosion on the cylinder models a random interface fluctuating around a fixed line.
It can also model a moving interface if the measures $\mu_1$ and $\mu_2$ are allowed to depend on time. An interesting example is $\mu_i(t) = \delta_{Z_i(t)}$ where $Z_1$ and $Z_2$ are simple random walks independent of everything else in the process: that is, red and blue walkers are alternately released from a red source and blue source that themselves perform random walk on a slower time scale.  

Another model of a randomly evolving interface arises in the case 
of fixed but equal measures $\mu_1 = \mu_2$.  Figure~\ref{f.spiral} 
shows a variant of competitive erosion in the square grid $\Z^2$.
Initially all sites are colored white. 
Red and blue particles are alternately released from the origin.  
Each particle performs random walk until reaching 
a site in $\Z^2 -\{(0,0)\}$ colored differently from itself, 
and converts that site to its own color. 
Particles that return to the origin before converting a site are killed. 
One would not necessarily expect \emph{any} interface 
to emerge from this process, but simulations show 
surprisingly coherent red and blue territories.

\paragraph{Conformal invariance.}
 Our choice of the cylinder graph with uniform sources $\mu_i$ 
on the top and bottom is designed to make the function $g$ in the level set heuristic (see \eqref{e.generalg}) as simple as possible: 
$g(x,y) = 1-y$. A candidate Lyapunov function for more general graphs is
	\[ h(S) = \sum_{v \in S^c} g(v) \]
whose maximum over $S \subset V$ of cardinality $k$ 
is attained by the level set \eqref{e.levelset}.  

A case of particular interest is the following: Let $V = D \cap (\frac1n \Z^2)$ where $D$ is a bounded simply connected planar domain.  We take $\mu_i = \delta_{z_i}$ for points $z_1,z_2 \in D$ adjacent to $D^c$. 
As the edges of our graph we take the usual nearest-neighbor edges of $\frac1n \Z^2$ and delete every edge between $D$ and $D^c$. In the case that $D$ is the unit disk with $z_1 = 1$ and $z_2 = -1$, the level lines of $g$ are circular arcs meeting $\partial D$ at right angles.  The location of the interface for general $D$ can then be predicted by conformally mapping $D$ to the disk.
Extending the key Theorem~\ref{nonnegativedrift} to the above setup is a technical challenge we address in a subsequent paper.
\section*{Acknowledgments}
We thank Gerandy Brito and Matthew Junge for helpful comments.
The work was initiated when S.G. was an intern with the Theory Group at Microsoft Research, Redmond and a part of it was completed when L.L. and J.P were visiting. They thank the group for its hospitality.
\bibliographystyle{alpha}
\bibliography{GFF}

\end{document}